\definecolor{MyDarkblue}{rgb}{0,0.08,0.50}
\definecolor{Brickred}{rgb}{0.65,0.08,0}
\newtheorem*{theorem*}{Theorem}
\newtheorem{theorem}{Theorem}[section]
\newtheorem{lemma}[theorem]{Lemma}
\newtheorem{proposition}[theorem]{Proposition}
\newtheorem{conjecture}[theorem]{Conjecture}
\theoremstyle{definition}
\newtheorem{definition}[theorem]{Definition}
\newtheorem{assumption}[theorem]{Assumption}
\newtheorem{remark}[theorem]{Remark}
\renewcommand{\P}{\mathbb{P}}
\newcommand{\Pv}{\mathbb{P}}
\newcommand{\eps}{\varepsilon}
\newcommand{\Var}{{\rm Var}}
\newcommand{\e}{{\mathrm e}}
\newcommand{\R}{\mathbb{R}}
\newcommand{\N}{\mathbb{N}}
\newcommand{\Z}{\mathbb{Z}}
\newcommand*{\wt}{\widetilde}
\newcommand*{\be}{\begin{equation}}
\newcommand*{\ee}{\end{equation}}
\newcommand*{\ba}{\begin{aligned}}
	\newcommand*{\ea}{\end{aligned}}
\newcommand*{\barr}{\begin{array}{c}}
	\newcommand*{\earr}{\end{array}}
\def \toinp    {\buildrel {\Pv}\over{\longrightarrow}}
\def \toindis  {\buildrel {d}\over{\longrightarrow}}
\def \toas     {\buildrel {a.s.}\over{\longrightarrow}}
\newcommand*{\ind}{\mathbbm{1}}
\def\namedlabel#1#2{\begingroup
	#2%
	\def\@currentlabel{#2}%
	\phantomsection\label{#1}\endgroup
}
\newcommand{\bes}{\begin{equation*}}
\newcommand{\ees}{\end{equation*}}
\renewcommand{\P}[1]{\mathbb{P}\!\left(#1\right)}
\newcommand{\E}[1]{\mathbb{E}\left[#1\right]}
\newcommand{\F}{W}
\newcommand{\G}{\mathcal{G}}
\newcommand{\Zm}{\mathcal{Z}}
\renewcommand{\N}{\mathbb{N}}
\newcommand{\dzn}[1]{\Delta \Zm_n(#1)}
\newcommand{\Ef}[2]{\mathbb{E}_\F#1[#2#1]}
\newcommand{\Pf}[1]{\mathbb{P}_\F\!\left(#1\right)}
\newcommand{\I}{\mathbb{I}}
\renewcommand{\d}{\mathrm{d}}
\newcommand{\zni}{\Zm_n(i)}
\newcommand{\inn}{i\in[n]}
\numberwithin{equation}{section}
\newcommand{\cb}{\color{black}}
\renewcommand{\e}{\mathrm{e}}
\newcommand{\leqnomode}{\tagsleft@true\let\veqno\@@leqno}
\newcommand{\reqnomode}{\tagsleft@false\let\veqno\@@eqno}
\newlength{\tagmarginsep} 
\begin{document}

	\title{The maximal degree in random recursive graphs with random weights}

	\date{\today}
	\keywords{Weighted recursive graph, Weighted random recursive tree, Random recursive graph, Uniform DAG, Maximum degree, Degree distribution, Random environment}
	
	\author[Lodewijks]{Bas Lodewijks}
	\address{Department of Mathematics, University of Augsburg, Universit\"atsstra\textup{\ss}e 2, 86159, Germany}
	\email{b.lodewijks@uni-a.de}
	
	\author[Ortgiese]{Marcel Ortgiese}
	\address{Department of Mathematical Sciences,
		University of Bath,
		Claverton Down,
		Bath,
		BA2 7AY,
		United Kingdom.}
	\email{m.ortgiese@bath.ac.uk}
	
	\begin{abstract}
	We study a generalisation of the random recursive tree (RRT) model and its multigraph counterpart, the uniform directed acyclic graph (DAG). Here, vertices are equipped with a random  vertex-weight representing initial inhomogeneities in the network, so that a new vertex connects to one of the old vertices with a probability that is proportional to their vertex-weight. We first identify the asymptotic degree distribution of a uniformly chosen vertex for a general vertex-weight distribution. For the maximal degree, we distinguish several classes that lead to different behaviour: For bounded vertex-weights we obtain results for the maximal degree that are similar to those observed for RRTs and DAGs. If the vertex-weights have unbounded support, then the maximal degree has to satisfy the right balance between having a high vertex-weight and being born early. 

For vertex-weights in the Fr\'echet maximum domain of attraction the first order behaviour of the maximal degree is random, while for those in the Gumbel maximum domain of attraction the leading order is deterministic. Surprisingly, in the latter case, the second order is random when considering vertices in a compact window in the optimal region, while it becomes deterministic when considering all vertices.
\end{abstract}

\maketitle 

\section{Introduction}

A random recursive graph  with random weights (WRG) is a family of growing random graphs
that combine features of an inhomogeneous random graph, such as e.g.\ the Chung-Lu model,  with random weights with those of a dynamical model that generalizes the classical model of random recursive trees.
In the WRG model, we assign to every vertex a random, independent, non-negative vertex-weight and at each step of the evolution a new vertex is introduced to the graph
and connected to a predecessor with probability  proportional to the predecessor's vertex-weight.

Similar to an inhomogeneous random graph, the random vertex-weight can be thought
of as modelling the naturally inhomogeneous attractiveness of each vertex in the network. 
Unlike for the inhomogeneous random graph, which is a static model, we are also interested
in describing the growth of the network.

If the weights are taken to be deterministic and the same for every vertex, then 
the model reduces to the classical 	
random recursive tree (RTT) or its 
multigraph counterpart known as the uniform directed acyclic graph (DAG or uniform DAG). The latter was introduced by Devroye and Lu in~\cite{DevLu95} and allows for an incoming vertex to connect to $k$ predecessors. The original RRT was first introduced by Na and Rapoport in $1970$~\cite{NaRap70} and has since attracted 
a wealth of interest, uncovering the behaviour of many of its properties, including, among others: the number of leaves, profile of the tree, height of the tree, vertex degrees and the size of sub-trees. \cite{SmyMah95} and the more recent~\cite{Drm09} provide good surveys on the topic. 

Another important reference point for us are preferential attachment models, which are  dynamic models where vertices are also not chosen uniformly, but instead are chosen 
with probability proportional to their degree.
For a review  of the recent literature, see~\cite{Hof16}.

In this paper, the main question is to understand 
the influence of the random weights, which can be thought of as a random environment, 
on the well-known dynamics of  the RRT.
As a first step, we analyse the 
asymptotic degree distribution which we obtain as the limit distribution of a uniformly chosen vertex.
In the random graph literature it is folklore that the tail behaviour of the degree distribution
determines
many of the features of the model, such as the asymptotics of the largest degree or typical distances.

For the RRT is known, see e.g.\ the recent work of Addario-Berry and Eslava~\cite{AddEsl18}, that the degree distribution decays like~$2^{-(k+1)}$. 
In contrast, preferential attachment models are famous for producing power law 
tails. In our case, the random weights produce a variety of tail behaviour that interpolates between these extremes. 
In the case of bounded weights, the tails correspond up to first order to the RRT. 
However, in the unbounded case 
when the weights have stretched exponential tails, which we call the Gumbel case, 
the degree distribution also has stretched exponential tails (although with a different exponent as the vertex-weight distribution).
If the weights have power law tails, which we call the Fr\'echet case, then the degree distribution has the same tails as the weights. 

The second statistics we consider is the asymptotic behaviour of the largest degree.
Understanding the  largest degree can tell us, for example, about the existence of hubs, 
i.e.\ vertices of very large degree that are known to play an important role in the 
connectivity structure of random graphs, see e.g.\ the book of Van der Hofstad for a great overview on random graphs and their connections to real-world networks~\cite{Hof16}.
By comparing to the i.i.d.\ case, we could expect that the tail of the degree distribution 
predicts the asymptotic behaviour of the largest degree.
We confirm this conjecture on the level of the first order growth and show the following 
results for the three regimes discussed above:
for bounded weights the system behaves similarly to a RRT and grows logarithmically, whereas for unbounded weights that are in the domain of attraction of a Gumbel distribution the maximal degree grows faster.
Finally, in the case when the weights are in the domain of attraction of a Fr\'echet distribution, 
the leading asymptotics of the maximal degree is random and we identify the limit as a functional of a Poisson point process. 

For Gumbel weights, in contrast to the Fr\'echet case, the first-order growth of the degrees is deterministic 
and by comparison to the case of the maximum of i.i.d.\ weights, it would be natural to conjecture that the second order 
is random. We confirm this observation for two special sub-cases of Gumbel weights. However, this result is only true when we consider a compact window in the 
region of indices that should correspond to the  index that realises the maximum degree. Finally, we 
identify the true second order and, somewhat surprisingly, it is also deterministic. This behaviour comes from the fact that 
we have to consider a much larger optimal window than initially suspected.

Finally, we address the question regarding the location of the currently fittest vertex (i.e.\ that attains the maximum degree). For classical preferential attachment models it is known, 
see e.g.~\cite{Hof16}, that there is persistence: 
in other words there is a vertex that after some time becomes the most powerful (largest) vertex
and then remains so forever.
However, in real-world networks it seems natural to allow newly incoming vertices
to become so powerful that they can compete with old vertices. 
It is known that for the WRG there is never persistence  and we establish the asymptotics of the location of the maximum in 
the Gumbel and Fr\'echet case (and see the recent work of the first author~\cite{Lod21} for the case of bounded weights).

Our results  recover and extend results on the degree distribution of RRTs and WRTs as well as the maximum degree in RRTs. Degree distributions in RRTs have been studied in ~\cite{GasWir84,MahLue92,MeirMoon88} and~\cite{NaRap70}; a very general class of weighted growing trees has been studied by Iyer in~\cite{Iyer20}. However, the results discussed so far
only consider trees, so that unlike in our work DAGs (in both the weighted and the non-weighted case) are not included.

Another type of weighted recursive trees was originally introduced by Borovkov and Vatutin in~\cite{BorVat06,BorVat206}, where the vertex-weights have a specific product-form, and in a general form by Hiesmayr and I\c{s}lak in~\cite{HieIsl17} (general in the sense that the weights need not have the product form as in~\cite{BorVat06} and~\cite{BorVat206}).
Recent work on weighted recursive trees includes the work of Mailler and Uribe Bravo~\cite{MaiBra19}, S\'enizergues~\cite{Sen19} and Pain and S\'enizergues~\cite{PainSen21} where the profile and height of the tree are analysed as well as vertex degrees, together with~\cite{Iyer20}, in which degree distributions of many weighted growing tree models are studied and the weighted recursive tree is a particular example.

Szyma\'nski~\cite{Szy90} was the first to obtain results on the growth rate of the maximum degree in RRTs, which were later extended by Devroye and Lu~\cite{DevLu95}, after which finer properties of high degrees were analysed by Goh and Schmutz~\cite{GohSch02} and Addario-Berry and Eslava~\cite{AddEsl18}. Recently, Banerjee and Bhamidi~\cite{BanBha20} studied the occurrence of persistence in growing random networks 
and they include results describing the growth rate of the location of the maximum degree in RRTs. In WRTs, the behaviour of degrees and the maximum degree has received attention from S\'enizergues in~\cite{Sen19}, where the vertex-weights satisfy a more general product-form compared to~\cite{BorVat06,BorVat206}.


A related dynamical model that also allows to produce degree distributions with a variety of tail behaviour is
the preferential attachment model with a general attachment function analysed 
by Dereich and M\"orters~\cite{DereichMoerters2009}.
In their case, assuming that the probability to connect to a vertex of degree $n$ is proportional to $ \approx n^\alpha$, the model exhibits persistence if $\alpha \in (1/2,1]$ and no persistence for $\alpha \in (0,1/2)$. In the latter case, 
the authors also give precise asymptotics for the location of the maximum. 
This contrasts with our results, where even for the heavier tailed case we do not have persistence, even though if for  $\alpha \in (0,1)$ their degree distributions exhibit stretched exponential decay.

Our methods are  related to the analysis of the preferential attachment with additive fitness carried out by the authors in~\cite{LodOrt20}.
For these preferential attachment models, the attachment probabilities are proportional to the degree plus a random weight (fitness). In these models, 
we distinguish three different regimes: first of all  a \emph{weak disorder regime}, where
the preferential attachment mechanism dominates (and there is persistence). 
This is closely related to the work of S\'enizergues~\cite{Sen19}, which in turn  corresponds to a WRT where the partial sums of the weights is at most of order $n^\gamma$ for $\gamma \in (0,1)$.
Moreover, in~\cite{LodOrt20} we identify a \emph{strong and extreme disorder regime} where the influence of the random weights takes over, which appears when the distribution of the weights is sufficiently heavy-tailed.
For WRGs there is no preferential attachment component for the vertex-weights to compete with so that the influence of the random weights is more immediate and already appears for less heavy-tailed weights.

Our results for the degree distribution follow by adjusting the proofs in~\cite{LodOrt20}, as at least on the level of 
degree distributions the WRG model is essentially a simpler model compared to the PAF models, but the asymptotic analysis is new. The results for the maximum degree in the case of bounded weights follow  with similar  techniques as developed by Devroye and Lu~\cite{DevLu95}, which can be extended to WRGs. 
The main contribution of this paper is to understand the first and second order asymptotics of the maximal degree
in the case when the weights are unbounded and satisfy suitable regularity assumptions.
For unbounded weights, the system is driven by the competition between the benefit of being an old vertex and so having time to accumulate a high degree and the benefit of being a young vertex with a large weight. 
To understand the first-order growth rate (as well as the second order when considering a compact window), 
we show concentration of the degrees around the conditional means (when conditioning on the weights) and then in a second 
step analyse the conditional mean degree using 
extreme value theory in the case of random fluctuations (similarly as in~\cite{LodOrt20}). However, as the weights have a more immediate 
impact, the results become more dependent on the exact distribution of weights chosen and thus require more 
intricate calculations.
Finally, to obtain the true second-order asymptotics in the Weibull case, we can no longer rely on 
the elegant tools of convergence to Poisson processes from 
extreme value theory and instead have to carefully keep track of errors made in the corresponding approximations.

\textbf{Notation.} Throughout the paper we use the following notation: we let $\N:=\{1,2,\ldots\}$ be the natural numbers, set $\N_0:=\{0,1,\ldots\}$ to include zero and let $[t]:=\{i\in\N: i\leq t\}$ for any $t\geq 1$. For $x,y\in\R$, we let $\lceil x\rceil:=\inf\{n\in\Z: n\geq x\}$ and $\lfloor x\rfloor:=\sup\{n\in\Z: n\leq x\}$ { \cb and let $x\vee y:=\max\{x,y\}$ and $x\wedge y:=\min\{x,y\}$}. Moreover, for sequences $(a_n)_{n\in\N}$ and $(b_n)_{n\in\N}$ we say that $a_n=o(b_n), a_n\sim b_n, a_n=\mathcal{O}(b_n)$ if $\lim_{n\to\infty}a_n/b_n=0, \lim_{n\to\infty} a_n/b_n=1$ and if there exist constants $C>0,n_0\in\N$ such that $a_n\leq Cb_n$ for all $n\geq n_0$, respectively. For random variables $X,(X_n)_{n\in\N}$ we denote $X_n\toindis X, X_n\toinp X$ and $X_n\toas X$ for convergence in distribution, probability and almost sure convergence of $X_n$ to $X$, respectively. Also, we write $X_n=o_\mathbb{P}(1)$ if $X_n \toinp 0$. Throughout, we denote by $(W_i)_{i\in\N}$  i.d.d.\ random variables and use the conditional probability measure $\Pf{\cdot}:=\mathbb{P}(\,\cdot\, |(\F_i)_{i\in\N})$ and conditional expectation $\Ef{}{\cdot}:=\E{\,\cdot\,|(\F_i)_{i\in\N}}$.

\section{Definitions and main results}\label{sec:results}

The Weighted Recursive Graph (WRG) model is a growing random graph model that is a generalisation of the random recursive tree (RRT) and the uniform directed acyclic graph (DAG) models in which vertices are assigned (random) weights and new vertices connect with existing vertices with a probability proportional to the vertex-weights.

We define the WRG model as follows:

\begin{definition}[Weighted Recursive Graph]\label{def:wrg}
	Let $(\F_i)_{i\geq 1}$ be a sequence of i.i.d.\ copies of a non-negative random variable $\F$ such that $\P{\F>0}=1$, let $m\in\N$ and set
	\be 
	S_n:=\sum_{i=1}^n\F_i.
	\ee
	We construct the \emph{Weighted Recursive Graph} as follows:
	\begin{enumerate}
		\item[1)] Initialise the graph with a single vertex $1$, denoted as the root, and assign to the root a vertex-weight $\F_1$. Denote this graph by $\G_1$.
		\item[2)] For $n\geq 1$, introduce a new vertex $n+1$ and assign to it the vertex-weight $\F_{n+1}$ and $m$ half-edges. Conditionally on $\G_n$, independently connect each half-edge to some $\inn$ with probability $\F_i/S_n$. Denote the resulting graph by $\G_{n+1}$.
	\end{enumerate}
	We will treat $\G_n$ as a directed graph, where edges are directed from new vertices towards old vertices.
\end{definition}

\begin{remark}\label{remark:def} $(i)$ Note that the edge connection probabilities remain the same if 
	we multiply each weight by the same constant. In particular, if convenient, we may without loss of generality assume for vertex-weight distributions with bounded support, i.e.\ $x_0:=\sup\{x\in\R\,|\, \P{\F\leq x}<1\}<\infty$, that $x_0=1$. 
	Alternatively, and we will do this in particular for distributions with unbounded support and finite mean, i.e.\ $x_0=\infty$ and $\E{\F}<\infty$, we can assume that  $\E{\F}=1$.
	
	$(ii)$ It is possible to extend the definition of the WRG such that the \emph{out-degree is random} and the results presented in this paper still hold. Namely, we can allow that vertex $n+1$ connects to $\emph{every}$ vertex $\inn$ independently with probability $\F_i/S_n$. At the start of sections dedicated to proving the results we present below, we discuss why the results hold for the random out-degree model as well.
\end{remark}

{\cb To formulate our results we need to assume that the {\cb tail of the }distribution of the weights is sufficiently regular, allowing us to control their extreme value behaviour. To this end, we draw on classical results from extreme value theory. Consider a sequence $(W_i)_{i\in\N}$ of i.i.d.\ random variables with distribution function $F$. Suppose there exist a distribution function $G$ and sequences $(a_n)_{n\in\N}$ and $(b_n)_{n\in\N},a_n>0,b_n\in\R,$ such that for any continuity point $x\in\R$ of $G$, 
	\be \label{eq:evt}
	\lim_{n\to\infty}F(a_nx+b_n)^n=G(x).
	\ee   
	Observe that~\eqref{eq:evt} is equivalent to
	\be \label{eq:maxconv}
	\frac{\max_{\inn}W_i-b_n}{a_n}\toindis X,
	\ee 
	where $X$ is a random variable with distribution function $G$.	The Fischer-Tipett-Gnedenko theorem, see e.g.~\cite[Proposition $0.3$]{Res13}, says that then $G$ belongs, {\cb up to translation and scaling,} to one of three families of distribution functions, parametrised by a parameter ${\cb \alpha > 0}$:
	\begin{table}[h]
		\begin{tabular}{rll}
			$(\mathrm{i})$ & Weibull: & $G(x)=\exp(-(-x)^\alpha)$ if $x<0$ and $G(x)=1$ otherwise. \\
			$(\mathrm{ii})$ & Gumbel: & $G(x)=\exp(-\exp(x)$ for $x\in \R$.\\
			$(\mathrm{iii})$ & Fr\'echet: & $G(x)=\exp(-x^{-\alpha})$ if $x>0$ and $G(x)=0$ otherwise.
		\end{tabular}
	\end{table}\\
	
	Here, $\alpha$ is known as the exponent of the distribution. If a sequence of i.i.d.\ random variables satisfies~\eqref{eq:evt}, we say that these random variables belong to (or their distribution function belongs to) the maximum domain of attraction (MDA) of one of the three cases. 
	
	{\cb 	Typical examples that belong to each of the three MDAs are: }
	\begin{enumerate}[label=(\roman*)]
		\item Weibull MDA: random variables with bounded support and polynomially decaying tails, e.g.\ beta random variables. 
		\item Gumbel MDA: random variables with either bounded or unbounded support and exponentially decaying tails, e.g. exponential or log-normal random variables.
		\item Fr\'echet MDA: random variables with unbounded support and polynomially decaying tails, e.g.\ Cauchy random variables.
	\end{enumerate}
	
	We refer to~\cite{Res13} for a more formal characterisation of the domains of attraction and more in-depth information about extreme value theory. 
	{\cb These results motivate the following assumptions:}	
}

\begin{assumption}[Vertex-weight distributions]\label{ass:weights}
	The vertex-weights $W,(W_i)_{i\in\N}$ satisfy one of the following conditions:
	\begin{enumerate}[labelindent = 1cm, leftmargin = 2.2cm]
		\item[\namedlabel{ass:weightbdd}{$($\textbf{Bounded}$)$}]The vertex-weights are almost surely bounded, i.e.\ \[ x_0:=\sup\{x\in\R\,|\, \P{\F\leq x}<1\}<\infty.\] Without loss of generality, we can assume that $x_0=1$.\\ Within this class, we can further identify vertex-weight distributions that belong to the Weibull {\cb MDA}
		and Gumbel MDA.
		\item[\namedlabel{ass:weightgumbel}{$($\textbf{Gumbel}$)$}] The vertex-weights follow a distribution that belongs to the Gumbel {\cb MDA}
		such that $x_0=\infty$. Without loss of generality, $\E{\F}=1$. {\cb  This implies that there exist sequences $(a_n)_{n\in\N}$ and $(b_n)_{n\in\N}$ such that~\eqref{eq:evt} (or, equivalently,~\eqref{eq:maxconv}) is satisfied with $G$ as in case $(ii)$.}
		
		\noindent Within this class, we further distinguish the following three sub-classes:
		\begin{enumerate}[labelindent = 1cm, leftmargin = 1.3cm]
			\item[\namedlabel{ass:weighttauinf}{$($\textbf{SV}$)$}] $b_n\sim \ell(\log n)$ where $\ell$ is an increasing function that is slowly-varying at infinity, i.e. $\lim_{x\to\infty}\ell(cx)/\ell(x)=1$ for all $c>0$.
			\item[\namedlabel{ass:weighttaufin}{$($\textbf{RV}$)$}]
			There exist $a,c_1,\tau>0$, and $b\in\R$ such that
			\[ \P{ \F \geq x} \sim a x^b 	\e^{-( x/c_1)^\tau} \quad \mbox{as } x \rightarrow \infty .\] 
			\item[\namedlabel{ass:weighttau0}{$($\textbf{RaV}$)$}] 
			There exist $a,c_1>0, b\in\R,$ and $\tau>1$ such that 
			\[ \P{ \F \geq x} \sim a (\log x)^b 	\e^{- (\log (x)/c_1)^\tau} \quad \mbox{as } x \rightarrow \infty .\] 
		\end{enumerate}
		\item[\namedlabel{ass:weightfrechet}{$($\textbf{Fr\'echet}$)$}] The vertex-weights follow a distribution that belongs to the Fr\'echet MDA. Without loss of generality, $\E{\F}=1$ (given that $\E{\F}<\infty$ is satisfied). This implies that there exists a non-negative function $\ell$ that is slowly-varying at infinity and some $\alpha>1$, such that
		\be 
		\P{\F\geq x}=\ell(x)x^{-(\alpha-1)}.
		\ee
		Moreover, if we let $u_n:=\sup\{t\in \R:\P{\F\geq t}\geq 1/n\}$, then~\eqref{eq:evt} (or, equivalently,~\eqref{eq:maxconv}) is satisfied with $b_n=0, a_n=u_n$.
	\end{enumerate} 
\end{assumption}

\begin{remark}\label{remark:weights}
	Note that~\cite{Tak87} shows (with a slight error in the paper in that the $\log a$ term below is a $\log \tau$ in~\cite{Tak87}) that if the weight distribution satisfies the assumption~\ref{ass:weighttaufin}, then we can choose 
	\be\label{eq:bnrv} a_n= c_2(\log n)^{1/\tau-1}, b_n= c_1 (\log n)^{1/\tau}+a_n((b/\tau)\log\log n+b\log c_1+\log a),
	\ee 
	for the same constants as above and $c_2:=c_1/\tau$. Moreover, in the~\ref{ass:weighttau0} sub-case, we can choose 
	\be\ba \label{eq:bnrav}
	a_n&=c_2(\log n)^{1/\tau-1}\exp(c_1(\log n)^{1/\tau}+c_2(\log n)^{1/\tau-1}((b/\tau)\log\log n+b\log c_1+\log a)),\\
	b_n&= \exp(c_1 (\log n)^{1/\tau}+c_2(\log n)^{1/\tau-1}((b/\tau)\log\log n+b\log c_1+\log a)).
	\ea\ee 
	
	In particular, the  three sub-cases in the~\ref{ass:weightgumbel} case,~\ref{ass:weighttauinf},~\ref{ass:weighttaufin}, and~\ref{ass:weighttau0}, can be distinguished as $b_n=g(\log n)$, with $g$ a slowly-varying, regularly-varying and rapidly-varying function at infinity, respectively. Note that in all cases, $b_n$ itself is slowly varying at infinity. In the~\ref{ass:weighttaufin} sub-case, we  very often use the asymptotic equivalence for $b_n$, that is, $b_n\sim c_1(\log n)^{1/\tau}$. Moreover, in the~\ref{ass:weighttau0} sub-case, we can think of $b_n$ as $\exp((\log n)^{1/\tau}\ell(\log n))$ and $a_n$ as $c_2(\log n)^{1/\tau-1}b_n$.
	
	Furthermore, the~\ref{ass:weighttaufin} sub-case contains (stretched and compressed) exponential distributions, e.g.\ the gamma distribution. The~\ref{ass:weighttau0} sub-case contains log-compressed exponential distributions, e.g.\ the log-normal distribution. 
	
	{\cb Finally, despite the fact that the sequences $a_n$ and $b_n$ are indexed by integer values, we sometimes will, for ease of writing, abuse notation and index them by non-integer values such as $n^\gamma$ with $\gamma\in(0,1)$. As the examples in~\eqref{eq:bnrv} and~\eqref{eq:bnrav} are continuous {\cb  if we replace $n$ by 	$t \in (0,\infty)$, we can extend their definitions correspondingly.}}
	\end{remark}
	
	We now present the results for the degree distribution and the maximum degree in the WRG model. In comparison to the preferential attachment with additive fitness (PAF) models as studied in~\cite{LodOrt20}, vertex-weights with a distribution with a `thin' tail, i.e.\ distributions with exponentially decaying tails or bounded support, now can also exert their influence on the behaviour of the system.
	
	Throughout, we write 
	\[  	\zni := \mbox{ in-degree of vertex } i \mbox{ in } \G_n . \]
	We prefer to work with the in-degree as it then is easier to (in principle) generalize our methods to graphs with random out-degree. Obviously, if the out-degree is fixed, we can recover the results for the degree from our results for $\zni$.
	
	\subsection{Degree distribution} The first result deals with the degree distribution of the WRG model. Let us first introduce the following measures and quantities:
	\be \label{eq:gammas}
	\Gamma_n:=\frac{1}{n}\sum_{i=1}^n \zni \delta_{\F_i},\quad \Gamma_n^{(k)}:=\frac{1}{n}\sum_{i=1}^n \ind_{\{\zni=k\}}\delta_{\F_i},\quad p_n(k):=\Gamma_n^{(k)}([0,\infty)),
	\ee 
	where $\delta$ is a Dirac measure, and which correspond to the empirical weight distribution of a vertex sampled weighted by its in-degree, then the joint empirical vertex-weight and in-degree distribution and finally the empirical
	degree distribution. We can then formulate the following theorem:
	
	\begin{theorem}[Degree distribution in WRGs]\label{Thrm:wrrtdegree}
Consider the WRG model in Definition~\ref{def:wrg} and suppose that the vertex-weights have finite mean and denote their distribution by $\mu$. Then, almost surely, for any $k\in\N_0$, as $n\to\infty$, 
\be \label{eq:gammaconv}
\Gamma_n\to\Gamma,\qquad \Gamma_n^{(k)}\to\Gamma^{(k)},\quad \text{and}\quad p_n(k)\to p(k),
\ee 
where the first two statements hold with respect to the weak$^\star$
topology and the limits are given as
\be \label{eq:gammalimit}
\Gamma(\d x):=\frac{xm}{\E W}\mu(\d x),\qquad \Gamma^{(k)}(\d x)=\frac{\E W/m}{\E W/m+x}\bigg(\frac{x}{\E W/m+x}\bigg)^{k}\mu(\d x),
\ee
and
\be \label{eq:pk}
p(k)=\int_0^\infty \frac{\E W/m}{\E W/m+x}\bigg(\frac{x}{\E W/m+x}\bigg)^{k}\mu(\d x).
\ee 
Finally, let the vertex-weight distribution be a power law as in the~\ref{ass:weightfrechet} case of Assumption~\ref{ass:weights} with $\alpha\in(1,2)$, such that there exists an $x_l>0$ with $\mu(x_l,\infty)=1$, i.e.\ the vertex-weights are bounded away from zero almost surely. Let $U_n$ be a uniformly at random selected vertex in $\G_n$, let $\eps>0$ and let $E_n:=\{\Zm_n(U_n)=0\}$. Then, for all $n$ sufficiently large, 
\be \label{eq:gwrrtEn}
\P{E_n}\geq 1-Cn^{-((2-\alpha)\wedge (\alpha-1))/\alpha+\eps},
\ee 
for some constant $C>0$.
\end{theorem}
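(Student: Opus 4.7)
\emph{Plan.} I condition throughout on the weight sequence $\F=(\F_i)_{i\in\N}$. Given $\F$, $\zni = \sum_{k=i}^{n-1} X_{i,k}$ is a sum of independent binomials $X_{i,k}\sim\mathrm{Bin}(m,\F_i/S_k)$, so the model is amenable to classical concentration and Poisson approximation arguments. The proof of the convergences in~\eqref{eq:gammaconv} proceeds in two steps: concentration around the conditional mean followed by identification of the limit.

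For concentration, altering any single half-edge attachment changes $\int f\,\d\Gamma_n^{(k)} = \frac1n\sum_i f(\F_i)\ind_{\{\zni=k\}}$ by at most $2\|f\|_\infty/n$ for bounded continuous $f$, so an Azuma-Hoeffding bound along the martingale indexed by the $m(n-1)$ edge insertions, combined with Borel-Cantelli, gives
\[\int f\,\d\Gamma_n^{(k)} - \Ef{}{\int f\,\d\Gamma_n^{(k)}} \to 0 \text{ almost surely},\]
and similarly for $\int f\,\d\Gamma_n = \frac1n\sum_i f(\F_i)\zni$. For identification, the SLLN gives $S_k/k \to \E{\F}$ a.s., so $\lambda_i := \Ef{}{\zni} \sim (m\F_i/\E{\F})\log(n/i)$. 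Le Cam's inequality $\dTV(\zni, \Poi(\lambda_i)) \leq m\F_i^2\sum_{k=i}^{n-1} 1/S_k^2$ forces $\zni$ to be asymptotically $\Poi(\lambda_i)$-distributed for $i\in[\eps n, n]$. Writing $i=\lceil sn\rceil$ and applying the SLLN to the empirical distribution of weights,
\[\Ef{}{\int f\,\d\Gamma_n^{(k)}} \longrightarrow \int\int_0^1 f(w)\,\e^{-\lambda(w,s)}\lambda(w,s)^k/k!\,\d s\,\d\mu(w),\qquad \lambda(w,s)=(mw/\E{\F})\log(1/s),\]
and the substitution $\lambda=(mw/\E{\F})\log(1/s)$ recovers $\int f\,\d\Gamma^{(k)}$ from~\eqref{eq:gammalimit}. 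The convergence $\Gamma_n\to\Gamma$ follows the same template (using $\int_0^1\log(1/s)\,\d s=1$ to produce the factor $m/\E{\F}$), and $p_n(k)\to p(k)$ follows by taking $f\equiv 1$.

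The rate bound on $\P{E_n}$ in the Fr\'echet case with $\alpha\in(1,2)$ must be handled separately, since $\E{\F}$ may now be infinite. By exchangeability, $\P{E_n^c}=\frac1n\sum_{i=1}^n\P{\zni\geq 1}$, and the union bound $\Pf{\zni\geq 1}\leq m\F_i\sum_{k=i}^{n-1}1/S_k$ together with a truncation of $\F_i$ at level $M$ gives
\[\P{\zni\geq 1}\leq \P{\F_i>M} + mM\cdot\E{\sum_{k=i}^{n-1}1/S_k} \leq \ell(M)M^{-(\alpha-1)} + mMC\, i^{-(2-\alpha)/(\alpha-1)}.\]
Here $\E{1/S_k}=O(1/a_k)$ with $a_k$ regularly varying of index $1/(\alpha-1)$ (the normalizing sequence for the stable limit $a_k/S_k\to 1/X$ with $X$ positive $(\alpha-1)$-stable); the required uniform integrability is provided by the lower bound $S_k \geq x_l k$, and since $1/(\alpha-1) > 1$ the tail sum is $\sum_{k\geq i}1/a_k = O(i^{-(2-\alpha)/(\alpha-1)})$. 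Summing over $i\in[n]$, the quantity $\sum_{i=1}^n i^{-(2-\alpha)/(\alpha-1)}$ grows polynomially when $\alpha>3/2$ and converges when $\alpha<3/2$; optimizing in $M$ in each regime yields error bounds of orders $n^{-(2-\alpha)/\alpha}$ and $n^{-(\alpha-1)/\alpha}$ respectively, so the minimum appears in~\eqref{eq:gwrrtEn} once the slowly-varying factors are absorbed into $n^\eps$.

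\emph{Main obstacle.} The most delicate step is the last one: carefully tracking the slowly-varying corrections in both $\ell$ and $a_k$ through the truncation-optimization argument, and showing that the two regimes $\alpha \gtrless 3/2$ combine to produce the $\min$ structure in the exponent. The identification step for~\eqref{eq:gammaconv} is more standard once the conditional Poisson approximation is in place, but still requires dominated convergence to handle the integrable singularity of $\log(1/s)$ near $s=0$ when passing to the Riemann-sum limit.
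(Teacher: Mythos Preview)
For the convergences in~\eqref{eq:gammaconv}--\eqref{eq:pk}, your approach is genuinely different from the paper's and is a valid alternative. The paper adapts the stochastic-approximation framework of~\cite{LodOrt20}: it tracks $X_n=\Gamma_n^{(k)}((f,f'])$ through a one-step recursion $X_{n+1}-X_n \approx \tfrac1{n+1}(A_n-B_nX_n)+\Delta R_n$ and appeals to a Robbins--Monro-type lemma (\cite[Lemma~3.1]{DerOrt14}) to pin down the limsup and liminf, then iterates in $k$. Your route via Azuma--Hoeffding on the edge-insertion filtration, conditional Poisson approximation, and an explicit $\int_0^1\,\d s$ integral is more direct for this particular model and avoids the recursion machinery, at the cost of losing the uniform treatment with the PAF models in~\cite{LodOrt20}. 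One point to watch: the Le Cam error $mW_i^2\sum_{j\geq i}S_j^{-2}$ is not obviously negligible on average when $\E{W^2}=\infty$, but splitting off $i<\eps n$ (contribution $O(\eps)$) and using $\sum_{i\leq n}W_i^2\leq (\max_{i\leq n}W_i)\,S_n=o(n^2)$ on the remainder fixes this.

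For the rate bound~\eqref{eq:gwrrtEn} there is a genuine gap. Your claim $\E{1/S_k}=O(1/a_k)$ cannot be obtained from ``uniform integrability provided by $S_k\geq x_l k$'': since $a_k$ is regularly varying of index $1/(\alpha-1)>1$, the bound $a_k/S_k\leq a_k/(x_l k)\to\infty$ gives no control, and the assumption $S_k\geq x_l k$ alone yields only $\E{1/S_k}\leq 1/(x_l k)$, which after your truncation and optimisation produces \emph{no} polynomial decay of $\P{E_n^c}$. The paper instead bounds $S_j\geq M_j:=\max_{i\leq j}W_i$ and estimates
\[
\E{1/M_j}\leq x_l^{-1}\,\P{M_j\leq j^{1/(\alpha-1)-\eps}}+j^{-1/(\alpha-1)+\eps},
\]
the first term being stretched-exponentially small, so that $\E{1/S_j}\leq C\,j^{-1/(\alpha-1)+\eps}$. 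With this input your truncation--optimisation scheme is essentially equivalent to the paper's: the paper truncates $W_{U_n}$ at level $n^\beta$ and optimises over $\beta$, landing at $\beta=((2-\alpha)/(\alpha(\alpha-1)))\wedge(1/\alpha)$; your two regimes $\alpha\gtrless 3/2$ correspond exactly to which branch of this minimum is active.
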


An important question regarding the degree distribution $p(k)$,  as in~\eqref{eq:pk}, is its asymptotic behaviour as $k \rightarrow \infty$. As it turns out, the answer depends on the particular choice of the distribution of the random variable $W$. Before we present a theorem dedicated to the asymptotic behaviour of the limiting degree distribution $p(k)$, we first introduce the following general lemma, which will allows us 
to distinguish several different cases for bounded weights.

\begin{lemma}\label{lemma:gumbelgumbel}
Let $W$ be a non-negative random variable such that $x_0:=\sup\{x>0:\P{W\leq x}<1\}<\infty$. Then, the distribution of $W$ belongs to the Weibull (resp.\ Gumbel) MDA if and only if $(x_0-W)^{-1}$ is a non-negative random variable with a distribution with unbounded support that belongs to the Fr\'echet (resp.\ Gumbel) MDA.
\end{lemma}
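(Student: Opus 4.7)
My plan would be to derive both equivalences from the simple tail identity
\[
\overline F_Y(y) = \overline F_W(x_0 - 1/y), \qquad y > 1/x_0,
\]
for $Y := (x_0 - W)^{-1}$, coupled with the bijective substitution $s = 1/(x_0 - t)$. The fact that $Y$ has unbounded support is immediate: the definition of $x_0$ forces $\mathbb P(W > x_0 - \varepsilon) > 0$ for every $\varepsilon > 0$, so $\mathbb P(Y > 1/\varepsilon) > 0$. Thus only the MDA classification needs verifying.

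For the Weibull/Fr\'echet part, I would invoke the standard tail characterizations: $W$ belongs to the Weibull MDA with index $\alpha > 0$ iff $y \mapsto \overline F_W(x_0 - 1/y)$ is regularly varying at infinity with index $-\alpha$, while $Y$ belongs to the Fr\'echet MDA iff $\overline F_Y$ is regularly varying at infinity with the same index. Under the tail identity these two conditions are literally identical, so this direction requires no work.

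For the Gumbel/Gumbel equivalence I would use the von Mises-type characterization: a distribution with upper endpoint $x^*$ is in the Gumbel MDA iff there exists a positive auxiliary function $a(\cdot)$ with $\overline F(t + x a(t))/\overline F(t) \to e^{-x}$ locally uniformly in $x \in \mathbb R$ as $t \uparrow x^*$, and moreover such $a$ can be chosen to satisfy $a(t) = o(x^* - t)$ in the bounded case and $a(t) = o(t)$ in the unbounded case. Starting from $a_W$ for $W$, the natural candidate for $Y$ is
\[
a_Y(s) := s^2\, a_W(x_0 - 1/s),
\]
so that $a_W(t)/(x_0 - t) = a_Y(s)/s \to 0$. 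A Taylor expansion then gives
\[
x_0 - \frac{1}{s + y a_Y(s)} = \left(x_0 - \tfrac{1}{s}\right) + \frac{y a_Y(s)}{s^2} + O\!\left(\frac{a_Y(s)^2}{s^3}\right) = t + (y + o(1))\, a_W(t),
\]
using $a_Y(s)/s^2 = a_W(t)$ and $a_Y(s)^2/s^3 = a_W(t) \cdot a_Y(s)/s = o(a_W(t))$. Local uniformity of the Gumbel limit for $W$ would then yield $\overline F_Y(s + y a_Y(s))/\overline F_Y(s) \to e^{-y}$, confirming $Y$ is in the Gumbel MDA. The converse follows by the symmetric choice $a_W(t) := (x_0 - t)^2 a_Y(1/(x_0-t))$ and the same expansion in reverse.

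The main obstacle is the Taylor-expansion step in the Gumbel case: it depends on the built-in property $a(t) = o(x^* - t)$ of the MDA auxiliary function (which must be invoked as a standard fact rather than rederived) and on local uniform convergence of the MDA limit (automatic, since the limit $e^{-x}$ is continuous and the prelimits are monotone in $t$, by a Dini-type argument). Once these two extreme-value inputs are in place, the rest of the argument is bookkeeping.
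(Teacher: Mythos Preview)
Your proposal is correct. The Weibull/Fr\'echet part is identical in spirit to the paper's (both amount to the tail identity plus the regular-variation characterization, which the paper packages as a citation to Resnick's Propositions~1.11 and~1.13). For the Gumbel/Gumbel part you and the paper arrive at the \emph{same} transformed auxiliary function $a_Y(s)=s^2 a_W(x_0-1/s)$ (in the paper's notation $\tilde f(x)=x^2 f(1-1/x)$), but the verifications diverge: the paper works with the von Mises integral representation $\overline F(x)=c(x)\exp\{-\int g/f\}$ from \cite{Res13}, transforms it under $u=(1-t)^{-1}$, and then checks the derivative condition $\tilde f'(x)\to 0$ explicitly, using $f'(u)\to 0$ and $(x_0-u)^{-1}f(u)\to 0$ from Resnick's Lemma~1.2. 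You instead use the $\Gamma$-variation (tail-ratio) characterization directly and close the argument via the Taylor expansion $x_0-1/(s+ya_Y(s))=t+(y+o(1))a_W(t)$, invoking the same input $a_W(t)=o(x_0-t)$ but bypassing any derivative computation. Your route is a touch more elementary and avoids verifying absolute continuity and the limit of $\tilde f'$; the paper's route has the mild advantage of producing the full von Mises representation of $Y$ explicitly, which is not needed here but could be useful downstream. Both rely on the same standard extreme-value facts, so neither is materially shorter once the citations are unpacked.
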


We are now ready to present the results on the asymptotics of $p(k)$.

\begin{theorem}[Asymptotic behaviour of $p(k)$]\label{thrm:pkasymp}
Consider the WRG with vertex-weights $(W_i)_{i\in\N}$, i.i.d.\ copies of a non-negative random variable $W$ such that $\E{W}<\infty$. We consider the different cases with respect to the vertex-weights as in Assumption~\ref{ass:weights}.
\begin{enumerate}[labelindent = 1cm, leftmargin = 2.2cm]
	\item[\namedlabel{thrm:pkbdd}{$(\mathrm{\mathbf{Bounded}})$}] Let $\theta_m:=1+\E{\F}/m$ and recall that $x_0=\sup\{x>0:\P{W\leq x}<1\}<\infty$. When $W$ belongs to the Weibull MDA  with parameter $\alpha>1$, for all $k>m/\E{W}$,
	\be\label{eq:pkbddweibull} 
	\underline L(k)k^{-(\alpha-1)}\theta_m^{-k} \leq p(k)  \leq \overline L(k)k^{-(\alpha-1)} \theta_m^{-k},
	\ee
	where $\underline L,\overline L$ are slowly varying at infinity.\\
	When $W$ belongs to the Gumbel MDA,
	\begin{enumerate}
		\item[$(i)$] If $(1-W)^{-1}$ satisfies the~\ref{ass:weighttaufin} sub-case with parameter $\tau>0$, then for $\gamma:=1/(\tau+1)$, 
		\be\label{eq:pkbddgumbelrv}
		p(k)= \exp\Big(-(1+o(1))\frac{\tau^\gamma}{1-\gamma}\Big(\frac{(1-\theta_m^{-1})k}{c_1}\Big)^{1-\gamma}\Big)\theta_m^{-k}.
		\ee			
		\item[$(ii)$] If $(1-W)^{-1}$ satisfies the~\ref{ass:weighttau0} sub-case with parameter $\tau>1$,
		\be\label{eq:pkbddgumbelrav}
		p(k)  = \exp\Big(-\Big(\frac{\log k}{c_1}\Big)^\tau\Big(1-\tau(\tau-1)\frac{\log\log k}{\log k} + \frac{K_{\tau,c_1,\theta_m}}{\log k}(1+o(1))\Big)\Big)\theta_m^{-k},		
		\ee 
		where $K_{\tau,c_1,\theta_m}:=\tau\log ( \e c_1^{\tau}(1-\theta_m^{-1})/ \tau)$.
	\end{enumerate}
	Finally, when $W$ has an atom at $x_0$, i.e.\ $q_0=\P{W=x_0}>0$, then
	\be \label{eq:pkatom_orig} p(k)=q_0(1-\theta_m^{-1})\theta_m^{-k} (1+o(1)).\ee 
	\item[\namedlabel{thrm:pkgumb}{$(\mathrm{\mathbf{Gumbel}})$}] If $W$ satisfies the~\ref{ass:weighttaufin} sub-case with parameter $\tau$, then for $\gamma:=1/(\tau+1)$,
	\be\label{eq:pkgumbrv}
	p(k)=\exp\Big(-\frac{\tau^\gamma}{1-\gamma}\Big(\frac{k}{c_1m}\Big)^{1-\gamma}(1+o(1))\Big).			
	\ee
	If $W$ satisfies the~\ref{ass:weighttau0} sub-case with parameter $\tau>1$,
	\be\label{eq:pkgumbrav}
	p(k)= k^{-1}\exp\Big(-\Big(\frac{\log (k/m)}{c_1}\Big)^\tau\Big(1-\tau(\tau-1)\frac{\log \log (k/m)}{\log (k/m)}+\frac{K_{\tau,c_1}}{\log (k/m)}(1+o(1))\Big)\Big),
	\ee 
	where $K_{\tau,c_1}:=\tau\log ( \e c_1^{\tau}/ \tau)$.
	\item[\namedlabel{thrm:pkfrechet}{$(\mathrm{\mathbf{Fr\acute echet}})$}] When $\alpha>2$,
	\be\label{eq:pkfrechet}
	\underline \ell(k)k^{-\alpha} \leq p(k)\leq \overline\ell(k)k^{-\alpha},
	\ee
	where $\overline \ell,\underline \ell$ are slowly varying at infinity. 
\end{enumerate}  
\end{theorem}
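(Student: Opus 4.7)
The plan is to work throughout from the explicit representation $p(k)=\E{g(W)f(W)^k}$ where $g(x):=c/(c+x)$ and $f(x):=x/(c+x)$ with $c:=\E{W}/m$, so that $\theta_m=1+c$ and $f(1)=\theta_m^{-1}$. Since $f$ is strictly increasing on $[0,\infty)$ with values in $[0,1)$, the large-$k$ behaviour of $g(W)f(W)^k$ concentrates near the upper endpoint of the support when $W$ is bounded, and around a saddle point determined by the tail of $W$ when $W$ is unbounded. Each case of the theorem corresponds to a different local behaviour of the weight distribution in the dominating region, and the proof is a collection of Laplace / Karamata type estimates sensitive to these local tails.

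For the \textbf{bounded} case (WLOG $x_0=1$), set $\varepsilon:=1-W\ge 0$ and write $f(W)^k=\theta_m^{-k}\exp(-k h(\varepsilon))$, with $h(\varepsilon)=\log(1+c\varepsilon/((1-\varepsilon)(1+c)))=(1-\theta_m^{-1})\varepsilon+O(\varepsilon^2)$ near $0$. The atom result~\eqref{eq:pkatom_orig} is then immediate: isolating the mass $\P{W=1}=q_0$ gives exactly $q_0(1-\theta_m^{-1})\theta_m^{-k}$, while the remainder over $W<1$ is $o(\theta_m^{-k})$ by dominated convergence, using $\theta_m f(x)<1$ for $x<1$. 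For the Weibull sub-case, Lemma~\ref{lemma:gumbelgumbel} identifies $(1-W)^{-1}$ as being in the Fr\'echet MDA of index $\alpha-1$, equivalently $F_\varepsilon(t)=t^{\alpha-1}L(1/t)$ with $L$ slowly varying. A Karamata/Tauberian argument on $\int e^{-k(1-\theta_m^{-1})\varepsilon}\,\d F_\varepsilon(\varepsilon)$ then yields order $k^{-(\alpha-1)}$ times slowly varying, and Potter bounds on $L$ supply the envelopes $\overline L,\underline L$ in~\eqref{eq:pkbddweibull}. For the Gumbel bounded sub-cases, the same lemma translates into the precise tail shape of $\varepsilon$ near $0$: in the \ref{ass:weighttaufin} sub-case $F_\varepsilon(t)\sim a t^{-b}\exp(-(1/(c_1 t))^\tau)$ and we apply Laplace's method to the saddle function $\phi(\varepsilon):=k(1-\theta_m^{-1})\varepsilon+(1/(c_1\varepsilon))^\tau$. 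The saddle sits at $\varepsilon^*=(\tau/(k(1-\theta_m^{-1})c_1^\tau))^{1/(\tau+1)}$ and evaluating gives the exponent $(\tau^\gamma/(1-\gamma))((1-\theta_m^{-1})k/c_1)^{1-\gamma}$ with $\gamma=1/(\tau+1)$, matching~\eqref{eq:pkbddgumbelrv}.

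For the \textbf{Gumbel} (unbounded) case, substitute $U:=c/W$ so that $f(W)^k=(1+U)^{-k}\approx e^{-kU}$ and $g(W)=U/(1+U)\approx U$ in the region where $W$ is large; the tail of $W$ translates into the small-$U$ behaviour of $U$. An identical saddle-point analysis, now with $\phi(u)=ku+(c/(c_1 u))^\tau$ in the \ref{ass:weighttaufin} sub-case, produces the stated exponent $(\tau^\gamma/(1-\gamma))(k/(mc_1))^{1-\gamma}$ after inserting $c=1/m$, as in~\eqref{eq:pkgumbrv}. The bounded-$W$ contribution is controlled crudely since $f(x)^k$ there is exponentially small relative to the saddle value. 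For the \textbf{Fr\'echet} case with $\alpha>2$, the integrand peaks near $W\sim ck$; the change of variable $u:=kc/W$ together with $f_W(x)\sim(\alpha-1)\ell(x)x^{-\alpha}$ gives
\[ p(k)\sim (\alpha-1)c\,(kc)^{-(\alpha-1)}\frac{1}{k}\int_0^\infty u^{\alpha-1} e^{-u}\,\ell(kc/u)\,\d u, \]
and Potter bounds on $\ell$ produce the slowly varying envelopes $\overline\ell,\underline\ell$ around $k^{-\alpha}$ in~\eqref{eq:pkfrechet}.

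The main obstacle lies in the \ref{ass:weighttau0} sub-cases~\eqref{eq:pkbddgumbelrav} and~\eqref{eq:pkgumbrav}, where the target exponent carries explicit log-log corrections. Here the saddle equation takes the form $k\lambda\varepsilon^* c_1^\tau=\tau(\log(1/\varepsilon^*))^{\tau-1}$ (with $\lambda=1-\theta_m^{-1}$ or $\lambda=1$ respectively), which must be solved iteratively to extract $\log(1/\varepsilon^*)=\log k+\log(\lambda c_1^\tau/\tau)-(\tau-1)\log\log k+o(1)$. Expanding $(\log(1/\varepsilon^*)/c_1)^\tau$ to the precision needed to pin down the constant $K_{\tau,c_1,\theta_m}=\tau\log(e c_1^\tau(1-\theta_m^{-1})/\tau)$ (respectively $K_{\tau,c_1}$) demands careful tracking of all second-order corrections, and one must additionally check that the Laplace fluctuation factor around $\varepsilon^*$ contributes only subleading polynomial factors in $k$, which are absorbed into the $(1+o(1))$ inside the exponential rather than appearing at the stated precision.
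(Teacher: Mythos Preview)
Your plan is sound and identifies the correct saddle points and leading exponents in every case; the computation $\phi(\varepsilon^*)=\frac{\tau^\gamma}{1-\gamma}\big((1-\theta_m^{-1})k/c_1\big)^{1-\gamma}$ is right, and your description of the RaV iteration is accurate. The paper, however, takes a different and more elementary route: instead of continuous Laplace/saddle-point analysis, it splits the integration range at finitely many points $s_k$ (or $t_{k,1}<\cdots<t_{k,J}$), bounds the unimodal integrand on each sub-interval by its endpoint value, and then optimises over the split points. For the \ref{ass:weighttaufin} cases this means minimising $d^{-1}+d^\tau$ over a fine grid of $d$'s, and for the \ref{ass:weighttau0} cases the number of split points $J=J(k)$ grows with $k$ so as to pin down the minimiser of $x-\tau c_1^{-\tau}\log(x/(1-\theta_m^{-1}))$, which is your $\varepsilon^*$ in different coordinates. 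This discretised approach uses only the tail probabilities $\P{W\geq x}$ and never touches a density.

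That is where your sketch has a genuine gap. In the Fr\'echet case you write $f_W(x)\sim(\alpha-1)\ell(x)x^{-\alpha}$, but the hypothesis is only $\P{W\geq x}=\ell(x)x^{-(\alpha-1)}$; even when a density exists, deducing its asymptotics requires the monotone density theorem and hence eventual monotonicity, which is not assumed. The same issue is latent in your Laplace arguments for the Gumbel sub-cases: textbook Laplace estimates presuppose a smooth integrand, so you would first need to integrate by parts against the tail function and control the boundary terms. Your Tauberian appeal in the Weibull case is legitimate (the Abelian direction for Laplace--Stieltjes transforms does not need a density), but for the remaining cases the paper's splitting method is precisely what lets one avoid these regularity issues. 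What your approach buys, when the density assumption can be justified, is a cleaner route to the constants in \eqref{eq:pkbddgumbelrav} and \eqref{eq:pkgumbrav}; what the paper's approach buys is validity under the stated hypotheses without further conditions.
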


\begin{remark}[Finer asymptotics when $W$ has an atom at $x_0$]
In the case that $W$ has an atom at $x_0$, i.e.\ $q_0=\P{W=x_0}>0$, then the proof immediately gives a more precise
version of~\eqref{eq:pkatom_orig} under additional assumptions.
If there exists an $s\in(0,1)$ such that $\P{W\in(s,1)}=0$,
\be \label{eq:pkatomgap}
p(k)=q_0(1-\theta_m^{-1})\theta_m^{-k}\big(1+\mathcal O\big(\exp(-(1-\theta_m^{-1})(1-s)k)\big)\big).
\ee 
Otherwise, for any positive sequence $(s_k)_{k\in\N}$ such that $s_k\uparrow 1$, we have that
\be \label{eq:pkatom}
p(k)=q_0(1-\theta_m^{-1}) \theta_m^{-k} \big(1+{\cb \mathcal O\big(\max\{\exp(-(1-\theta_m^{-1})(1-s_k)k), \P{W\in(s_k,1)}\}\big)}\big).
\ee 
\end{remark}

\subsection{Maximum degree}
The asymptotic behaviour of the degree distribution $p(k)$ in Theorem~\ref{thrm:pkasymp} allows for a non-rigorous estimation of the size of the maximum degree in $\G_n$. As is the case for the degree distribution, the behaviour of the maximum degree in the WRG model is highly dependent on the underlying distribution of the vertex-weights as well, and on a heuristic level one would expect the size of the maximum degree, say $d_n$, to be such that $\sum_{k \geq d_n} p(k)\approx 1/n$. The following theorem makes this heuristic statement precise and states the first-order growth rate of the maximum degree for three different classes of vertex-weight distributions. In all classes, we find, up to the leading order in the asymptotic expression of $p(k)$, that $\sum_{k \geq d_n} p(k)\approx 1/n$ is satisfied when considering the asymptotic expressions in Theorem~\ref{thrm:pkasymp}. In most cases, we can also identify the order of the label or location of the vertex that attains the maximum degree, which tells us \emph{when} the vertices with largest degree are introduced to the graph. Throughout, we set 
\be\label{eq:in}
I_n:=\inf\{\inn:\zni\geq \Zm_n(j)\text{ for all }j\in[n]\}.
\ee 

\subsubsection{Bounded case}

\begin{theorem}[Maximum degree in WRGs,~\ref{ass:weightbdd} case]\label{thrm:maxbdd}
Consider the WRG model as in Definition~\ref{def:wrg} and assume the vertex-weights satisfy the~\ref{ass:weightbdd} case. Let $\theta_m:=1+\E{\F}/m$. Then,
\be
\frac{\max_{\inn}\zni }{\log n}\toas \frac{1}{\log \theta_m}.
\ee 
\end{theorem}

In~\cite{EslLodOrt21}, Eslava and the authors of this article obtain, under additional assumptions and only in the tree case (i.e.\ $m=1$), higher-order asymptotic behaviour of the maximum degree and of near-maximum degree vertices.

A result we have been unable to prove, is the convergence of the location of the maximum degree in the WRG when the vertex-weights are almost surely bounded, which improves and extends the result proved in~\cite{BanBha20} for the random recursive tree.

\begin{conjecture}[Location of the maximum degree in WRGs with bounded weights]
Consider the WRG model as in Definition~\ref{def:wrg} and assume the vertex-weights satisfy the~\ref{ass:weightbdd} case. Let $\theta_m:=1+\E{\F}/m$ and recall $I_n$ from~\eqref{eq:in}. Then,
\be 
\frac{\log I_n}{\log n}\toas 1-\frac{\theta_m-1}{\theta_m\log(\theta_m)}.
\ee 	
\end{conjecture}

Since the first appearance of this article, this result has been proved by the first author in~\cite[Theorem $2.3$]{Lod21}. With more precise assumptions on the distribution of the vertex-weights, higher-order behaviour of the labels of high-degree vertices is obtained as well. 

\subsubsection{Fr\'echet case}

{\cb Our next result considers the case when 
the weights satisfy the~\ref{ass:weightfrechet} assumption.
In this case, it is classical that  the first order of the maximum of $n$ i.i.d.\ weights 
is random. The next theorem states that this also applies to the maximum degree
in this case. However, the normalizations and limits are different 
due to the non-trivial competition, where older vertices can achieve a higher degree because they have been in the system for longer, while younger vertices have the chance to have a big vertex-weight corresponding to a local maximum.}

\begin{theorem}[Maximum degree in WRGs,~\ref{ass:weightfrechet} case]\label{thrm:maxfrechet}
Consider the WRG model as in Definition~\ref{def:wrg} and assume the vertex-weights satisfy the~\ref{ass:weightfrechet} case. Recall $I_n$ from~\eqref{eq:in} and let $\Pi$ be  a Poisson point process (PPP) on $(0,1)\times (0,\infty)$ with intensity measure $\nu(\d t,\d x):=\d t \times (\alpha-1)x^{-\alpha}\d x$. Then, when $\alpha>2$,
\be\ba\label{eq:wrrtppplimit}
(\max_{i\in[n]}\zni/u_n,I_n/n) &\toindis (m\max_{(t,f)\in\Pi} f\log(1/t),I_\alpha),
\ea \ee
where $m\max_{(t,f)\in\Pi} f\log(1/t)$ and $I_\alpha$ are independent, with $I_\alpha\overset{d}{=}\mathrm{e}^{-W_\alpha}$ and $W_\alpha$ a $\Gamma(\alpha,1)$ random variable, and where $m\max_{(t,f)\in\Pi}f\log(1/t)$ has a Fr\'echet distribution with shape parameter $\alpha-1$ and scale parameter $m\Gamma(\alpha)^{1/(\alpha-1)}$. Finally, when $\alpha\in(1,2)$,
\be\ba \label{eq:wrrtinfmeanppplimit}
(\max_{i\in[n]}\zni/n,I_n/n)&\toindis (Z,I),
\ea \ee 	
for some random variable $I$ with values in $(0,1)$ and where 
\[ Z  = m\max_{(t,f)\in\Pi} f \int_t^1\bigg(\int_{(0,1)\times(0,\infty)} g\ind_{\{u\leq s\}}\,\d \Pi(u,g)\bigg)^{-1}\,\d s . \]
\end{theorem} 

\begin{remark}
$(i)$ The result in~\eqref{eq:wrrtinfmeanppplimit} is equivalent to the behaviour of the preferential attachment models with infinite mean power-law fitness random variables, as presented in~\cite{LodOrt20}. Here, the influence of the fitness (vertex-weights) overpowers the preferential attachment mechanism so that the preferential attachment graph behaves like a weighted recursive graph.

$(ii)$ The result in~\eqref{eq:wrrtppplimit} can actually be extended to hold joint for the $K$ largest degrees and their locations as well, for any $K\in\N$. The limits $(Z^{(K)},I_\alpha^{(K)})$ of the $K^{\mathrm{th}}$ largest degree and its location are independent, $I_\alpha^{(K)}\overset{d}{=}\e^{-W_\alpha^{(K)}}$, where the $(W_\alpha^{(K)})_{K\in\N}$ are i.i.d.\ $\Gamma(\alpha,1)$ random variables, and 
\be
\P{Z^{(K)}\leq x}=\sum_{i=0}^{K-1} \frac{1}{i!}(\Gamma(\alpha)(x/m)^{-(\alpha-1)})^i\exp(-\Gamma(\alpha)(x/m)^{-(\alpha-1)}).
\ee 
\end{remark}

\subsubsection{Gumbel case} 

{\cb In the \ref{ass:weightgumbel} case, we expect by comparing to the maximum
of i.i.d.\ weights that the first order of the maximal degree is deterministic, 
which is confirmed by our next result.}

\begin{theorem}[Maximum degree in WRGs,~\ref{ass:weightgumbel} case]\label{thrm:maxgumb}
Consider the WRG model as in Definition~\ref{def:wrg} and assume the vertex-weights satisfy the~\ref{ass:weightgumbel} case. Recall $I_n$ from~\eqref{eq:in}. For sub-case~\ref{ass:weighttauinf},
\be \label{eq:degtauinf}
\Big(\max_{\inn}\frac{\zni}{mb_n\log n},\frac{\log I_n}{\log n}\Big)\toinp (1,0).
\ee
For sub-case~\ref{ass:weighttaufin}, let $\gamma:=1/(\tau+1)$.
Then,
\be\ba\label{eq:gumbppplim}
\Big(\max_{\inn}\frac{\zni}{m(1-\gamma)b_{n^\gamma}\log n},\frac{\log I_n}{\log n}\Big)&\toas (1,\gamma).
\ea\ee
Finally, for sub-case~\ref{ass:weighttau0}, let $t_n:=\exp(-\tau\log n/\log(b_n))$. Then,
\be \label{eq:degtau0}
\Big(\max_{\inn}\frac{\zni }{mb_{t_nn}\log(1/t_n)},\frac{\log I_n}{\log n}\Big)\toinp (1,1).
\ee
\end{theorem}

\begin{remark}

$(i)$ We conjecture that the convergence in~\eqref{eq:degtauinf} can be strengthened to almost sure convergence. This is readily checked for particular vertex-weight distributions that satisfy the~\ref{ass:weightgumbel}-\ref{ass:weighttauinf} sub-case, e.g. $W:=\log W'$, where $W'$ satisfies the~\ref{ass:weightgumbel}-\ref{ass:weighttaufin} sub-case.

$(ii)$ In~\eqref{eq:gumbppplim} and~\eqref{eq:degtau0} the rescaling of the maximum degree can be interpreted as $m(1-\gamma)c_1\gamma^{1/\tau}(\log n)^{1+1/\tau}$ and $m\e^{-1}c_2^{-1}(\log n)^{1-1/\tau}\exp\big(c_1(\log n)^{1/\tau}\big)$, respectively, since the lower order terms of $b_n$ as in~\eqref{eq:bnrv} and~\eqref{eq:bnrav} can be ignored when considering only the first-order behaviour of the maximum degree. As a result, it should be possible to weaken the assumptions on the tail-distribution in the~\ref{ass:weightgumbel}-\ref{ass:weighttaufin} and~\ref{ass:weightgumbel}-\ref{ass:weighttau0} sub-cases to $\P{W\geq x}=\exp\big(-(x/c_1)^\tau(1+o(1))\big)$ and $\P{W\geq x}=\exp\big(-(\log(x)/c_1)^\tau(1+o(1))\big)$, respectively, such that the results in~\eqref{eq:gumbppplim} and~\eqref{eq:degtau0} still hold.
\end{remark}

{\cb For the maximum of  i.i.d.\ random variables satisfying 
the~\ref{ass:weightgumbel} case, it is also classical that the second order term is random, }as in~\eqref{eq:maxconv}. A natural conjecture would be that the same is true for this model. Our next result shows that if we consider the indices in a compact window around the maximal one (as identified in Theorem~\ref{thrm:maxgumb}), then this is indeed true.

To formulate our results, we introduce the following notation:
For $0<s<t<\infty$, $\gamma\in(0,1)$ and a strictly positive function $f$, define
\be\ba\label{eq:cnin}
C_n(\gamma,s,t,f)&:=\{\inn:sf(n)n^\gamma\leq i\leq tf(n)n^\gamma \},\\
I_n(\gamma,s,t,f)&:=\inf\{ i\in C_n(\gamma,s,t,f): \zni\geq \Zm_n(j)\text{ for all }j\in C_n(\gamma,s,t,f)\}.
\ea \ee 
We abuse notation to also write $C_n(1,s,t,t_n)$ and $I_n(1,s,t,t_n)$ when we deal with vertices $i$ such that $st_nn\leq i\leq tt_nn$ for some sequence $(t_n)_{n\in\N}$. We then present the following theorem:

\begin{theorem}[Random second-order asymptotics in the~\ref{ass:weightgumbel} case]\label{Thrm:maxsecond}
In the same setting as in Theorem~\ref{thrm:maxgumb}, we further assume that the vertex-weights fall into the sub-case~\ref{ass:weighttaufin}.
Let $\gamma:=1/(\tau+1)$ and let $\ell$ be a strictly positive function such that $\lim_{n\to\infty}\log(\ell(n))^2/\log n=\zeta_0$ for some $\zeta_0\in[ 0,\infty)$. Furthermore, let $\Pi$ be a Poisson point process (PPP) on $(0,\infty)\times \R$ with intensity measure $\nu(\d t,\d x):=\d t\times \e^{-x}\d x$. Then, when $\tau\in(0,1)$,
\be\label{eq:gumbppplim2ii} \begin{aligned}
	\Big(\max_{i\in C_n(\gamma,s,t,\ell)}& \frac{\zni-m(1-\gamma)b_{n^\gamma}\log n}{m(1-\gamma)a_{n^\gamma}\log n},\frac{I_n(\gamma,s,t,\ell)}{\ell(n)n^\gamma}\Big)\\
	& \quad\toindis \Big(\max_{\substack{(v,w)\in \Pi\\ v\in[s,t]}} w-\log v-\frac{\zeta_0(\tau+1)^2}{2\tau},\e^U \Big),
\end{aligned}\ee 
where $U\sim \mathrm{Unif}(\log s,\log t)$ and the maximum over the PPP follows a Gumbel distribution with location parameter $\log\log(t/s)-\zeta_0(\tau+1)^2/2\tau$. 

Finally, let us assume that the vertex-weights fall into the sub-class~\ref{ass:weighttau0} and let\\ $t_n:=\exp(-\tau\log n/\log(b_n))$. Then, for any $0<s<t<\infty$ and with $\Pi$ and $U$ as above,
\be\label{eq:rav2ndorder}
\Big(\max_{i\in C_n(1,s,t,t_n)}\frac{\zni-mb_{t_nn}\log(1/t_n)}{ma_{t_nn}\log(1/t_n)},\frac{I_n(1,s,t,t_n)}{t_nn}\Big)\toindis \Big(\max_{\substack{(v,w)\in\Pi\\ v\in[s,t]}}w-\log v,\e^U\Big),
\ee 
where now the maximum follows a Gumbel distribution with location parameter $\log\log(t/s)$.
\end{theorem}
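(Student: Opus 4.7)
The plan is to derive both~\eqref{eq:gumbppplim2ii} and~\eqref{eq:rav2ndorder} from a concentration estimate for $\zni$ given the weights, combined with a Poisson point process (PPP) approximation for the appropriately rescaled weights on the compact window. I sketch the~\ref{ass:weighttaufin} case in detail; the~\ref{ass:weighttau0} case follows the same scheme with the window $[st_nn, tt_nn]$ and scalings $b_{t_nn}, a_{t_nn}$ in place of $C_n(\gamma,s,t,\ell)$ and $b_{n^\gamma}, a_{n^\gamma}$. Conditionally on $(\F_j)_j$, $\zni$ is a sum of independent $\mathrm{Binomial}(m,\F_i/S_{k-1})$ contributions for $k=i+1,\dots,n$ with conditional mean $m\F_i L_n(i)$, where $L_n(i):=\sum_{k=i}^{n-1}1/S_k$; a conditional Bernstein inequality and a union bound over $i\in C_n$ give $\zni = m\F_i L_n(i) + o_{\PP}(a_{n^\gamma}\log n)$ uniformly in $i\in C_n$, while $S_k = k + O_{\PP}(\sqrt k)$ and a short variance estimate for $\sum_k(1/S_k-1/k)$ yield $L_n(i)=\log(n/i)+o_{\PP}(1)$ uniformly on $C_n$.

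Writing $\F_i=b_{n^\gamma}+a_{n^\gamma}y_i$, $i=v_i\,\ell(n) n^\gamma$, and $\log(n/i)=(1-\gamma)\log n-\log\ell(n)-\log v_i$, the expansion $b_{n^\gamma}/((1-\gamma)a_{n^\gamma}\log n)=1+O(\log\log n/\log n)$ (which follows from~\eqref{eq:bnrv} together with $\gamma=1/(\tau+1)$) combined with $\log\ell(n)=O(\sqrt{\log n})$ give
\[
\frac{\zni-m(1-\gamma)b_{n^\gamma}\log n}{m(1-\gamma)a_{n^\gamma}\log n} = y_i - \log v_i - \log\ell(n) + o_{\PP}(1)
\]
uniformly over $i\in C_n$. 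The crux is then to show that the point process
\[
\Xi_n := \sum_{i\in C_n(\gamma,s,t,\ell)}\delta_{(v_i,\,y_i-\log\ell(n)-d_n)}
\]
converges in distribution on $(s,t)\times\R$ to the PPP $\Pi$ of intensity $\d v\,\e^{-w}\d w$, for a deterministic shift $d_n\to -\zeta_0(\tau+1)^2/(2\tau)$. To identify $d_n$ one Taylor-expands $((b_{n^\gamma}+a_{n^\gamma}y)/c_1)^\tau$ to second order in $y$ and uses the sharp form of $b_n,a_n$ from~\eqref{eq:bnrv}: the condition $\log(\ell(n))^2/\log n\to\zeta_0$ sits exactly at the borderline where the quadratic corrections in the tail of $\F$ and the subleading terms in $b_n,a_n$ together produce a non-degenerate deterministic shift when evaluated at $y=\log\ell(n)+z$. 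Standard intensity-measure convergence criteria then yield $\Xi_n\toindis\Pi$.

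Applying the a.s.\ continuous functional $\Xi\mapsto\big(\max_{(v,w)\in\Xi\cap([s,t]\times\R)}(w-\log v),\,\argmax\big)$ to the limiting point process, together with the rewrite displayed above, yields~\eqref{eq:gumbppplim2ii}. Independence of the limiting maximum and its location, as well as the explicit Gumbel and uniform-in-log laws, follow from the void-probability identity $\int_s^t\int_{\log v+z}^\infty\e^{-w}\,\d w\,\d v=\log(t/s)\e^{-z}$. For~\eqref{eq:rav2ndorder}, the~\ref{ass:weighttau0} tail $\PP(\F\geq x)\sim a(\log x)^b\e^{-(\log x/c_1)^\tau}$ leads to an analogous but simpler expansion in which no second-order contribution in $\log t_n$ survives at the required precision, so that no deterministic shift appears in the limit. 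The main obstacle is this PPP convergence step: balancing the second-order Taylor terms of $(x/c_1)^\tau$ against the subleading terms of $b_n,a_n$ from~\eqref{eq:bnrv} so that the borderline regime $\log(\ell(n))^2/\log n\to\zeta_0$ produces precisely the constant $-\zeta_0(\tau+1)^2/(2\tau)$, rather than a diverging random contribution or a vanishing shift.
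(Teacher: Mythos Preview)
Your overall architecture—reduce to the conditional mean via concentration, then analyse the conditional mean through a PPP limit on the compact window, then apply a continuous maximum/argmax functional—is exactly the paper's. Your treatment of the shift $-\zeta_0(\tau+1)^2/(2\tau)$ is a reorganisation of the paper's: there one first proves the PPP limit with the ``native'' centering $b_{\ell(n)n^\gamma},a_{\ell(n)n^\gamma}$ (for which no shift appears) and then switches to $b_{n^\gamma},a_{n^\gamma}$ by convergence to types, the shift coming from the explicit limit
\[
\frac{b_{\ell(n)n^\gamma}\log(n^{1-\gamma}/\ell(n))-(1-\gamma)b_{n^\gamma}\log n}{(1-\gamma)a_{n^\gamma}\log n}\longrightarrow -\frac{\zeta_0(\tau+1)^2}{2\tau}.
\]
Your $\Xi_n$ packages the same computation in one step.

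There is, however, a real gap in the concentration step. You assert that Bernstein plus a union bound over $i\in C_n$ yields $\zni=m\F_iL_n(i)+o_{\PP}(a_{n^\gamma}\log n)$ \emph{uniformly} in $i\in C_n$. For $\tau\in(1/2,1)$ this is false. Take indices $i\in C_n$ with $W_i$ of intermediate size, say $W_i\in[b_{n^\gamma}/3,\,2b_{n^\gamma}/3]$. There are of order $n^{\gamma(1-2^{-\tau})}$ such indices (polynomially many), each with $\Ef{}{\zni}\asymp b_{n^\gamma}\log n\asymp(\log n)^{1/\tau+1}$. Bernstein then gives an exponent of order
\[
\frac{(a_{n^\gamma}\log n)^2}{b_{n^\gamma}\log n}\ \asymp\ (\log n)^{1/\tau-1},
\]
which for $\tau>1/2$ is $o(\log n)$ and cannot absorb the $n^{\gamma(1-2^{-\tau})}$ terms in the union bound. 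This is not an artefact of the bound: the variance of each such $\zni$ is $\asymp(\log n)^{1/\tau+1}$, so the maximal fluctuation over these $n^{\gamma(1-2^{-\tau})}$ indices is of order $\sqrt{(\log n)^{1/\tau+1}\cdot\log n}=(\log n)^{1/(2\tau)+1}$, which exceeds $a_{n^\gamma}\log n\asymp(\log n)^{1/\tau}$ exactly when $\tau>1/2$. (Relatedly, your displayed identity $y_i-\log v_i-\log\ell(n)+o_{\PP}(1)$ cannot hold uniformly either: the cross term $y_i\log\ell(n)/\log n$ blows up for the bulk of $i$, where $y_i\asymp -b_{n^\gamma}/a_{n^\gamma}$.)

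The paper avoids this by proving only what is needed, namely that $\max_{i\in C_n}\zni$ concentrates around $\max_{i\in C_n}\Ef{}{\zni}$ at scale $a_{n^\gamma}\log n$ (Proposition~\ref{lemma:wrrtconcentration}, equation~\eqref{eq:betaconc}). The lower bound is a one-index Chebyshev at the conditional-mean maximiser. The upper bound uses a stratification: for $i$ with $W_i<(1-\eps_n^{p})b_i$ one has $\Ef{}{\zni}\le(1-\eps_n^{p})\max_j\Ef{}{\Zm_n(j)}$ with high probability, so the event $\{\zni>\max_j\Ef{}{\Zm_n(j)}\}$ forces a deviation of size $\eps_n^{p}\,b_{n^\gamma}\log n\gg a_{n^\gamma}\log n$, for which Bernstein \emph{does} beat the union bound; the complementary set $\{W_i\ge(1-\eps_n^{p})b_i\}$ is then small enough (its size is controlled by the tail of $W$) that the weaker Bernstein exponent $(\log n)^{1/\tau-1}$ suffices there. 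A recursive choice of thresholds $\eps_n^{p_k}$ makes this work for all $\tau\in(0,1)$. Once you replace your uniform concentration claim by this max-concentration, and first restrict to $\{y_i\ge f\}$ before letting $f\to-\infty$ (as the paper does to handle the non-compactness in the second coordinate), the rest of your argument goes through.
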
 

\begin{remark}[The vertex with largest degree for~\ref{ass:weightgumbel} weights]\label{rem:gumbel_tau_large}
The restriction to $\tau \in (0,1)$ in~\eqref{eq:gumbppplim2ii} comes from the fact our result only looks at the fluctuations coming from the random weights and indeed the same statement is true for all $\tau>0$ when looking at the conditional expected degrees (conditioned on the random weights), see Proposition~\ref{prop:condmeantaufin} later on.
By a central limit theorem-type argument we would expect that the fluctuations  of the degree around its conditional mean are of the order square root of its variance (which is comparable to its mean and so of order $(\log n)^{(1/\tau+1)/2}$), therefore
if $\tau > 1$ this term would be larger than the fluctuations coming from the random weights (which are of order $(\log n)^{1/\tau}$) and so we would expect a different scaling  limit.
\end{remark}

A standard Poisson process calculation (for more details see Section~\ref{sec:overview}) shows that the limit random variables describing the second-order growth of the near-maximal
degree in Theorem~\ref{Thrm:maxsecond} become infinite if we let $s \downarrow 0$ and $t \rightarrow \infty$. 
This phenomenon indicates that we need to consider a much larger window of indices to capture the true second-order
asymptotics of the maximal degree over the full set of indices. This fact also shows that the 
competition between the advantages of older vertices compared to vertices with high weight is very finely balanced.
The following result captures the resulting effect on the second-order asymptotics, {\cb 
which surprisingly are no longer random.}

.

\begin{theorem}[Precise second-order asymptotics in the~\ref{ass:weightgumbel} case]\label{Thrm:maxsecond2}
In the same setting as in Theorem~\ref{thrm:maxgumb}, we first assume that the vertex-weights fall into the sub-case~\ref{ass:weighttaufin} and let $\gamma:=1/(\tau+1)$.
For $\tau\in(0,1]$, 
\be \label{eq:nottight}
\max_{\inn}\frac{\zni-m(1-\gamma)b_{n^\gamma}\log n}{m(1-\gamma)a_{n^\gamma}\log n\log\log n}\toinp \frac 12.
\ee 
Now assume that the vertex-weights fall into the sub-class~\ref{ass:weighttau0} and let\\ $t_n:=\exp(-\tau\log n/\log(b_n))$. If $\tau\in(1,3]$,
\be \label{eq:ravcor2nd}
\max_{\inn}\frac{\zni-mb_{t_nn}\log(1/t_n)}{ma_{t_nn}\log(1/t_n)\log\log n}\toinp \frac{1}{2}\Big(1-\frac{1}{\tau}\Big),
\ee 
whilst for $\tau>3$, 
\be \label{eq:ravcor2nd3}
\max_{\inn}\frac{\zni-mb_{t_nn}\log(1/t_n)}{ma_{t_nn}\log(1/t_n)(\log n)^{1-3/\tau}}\toinp -\frac{\tau(\tau-1)^2}{2c_1^3}.
\ee 
\end{theorem}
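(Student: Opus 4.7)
I would first reduce the problem to an analysis of the conditional expectation $\widetilde Z_n(i):=\mathbb{E}_\F[\zni]$ given the weights. Conditional on $(W_j)_j$, the variable $\zni$ is a sum of independent indicators, so $\mathrm{Var}_\F(\zni)\le \widetilde Z_n(i)\lesssim mW_i\log(n/i)$. A Bernstein-type concentration inequality combined with a union bound over $\inn$ would then show that $\max_{\inn}|\zni-\widetilde Z_n(i)|$ is of smaller order than the three target scales appearing in~\eqref{eq:nottight},~\eqref{eq:ravcor2nd} and~\eqref{eq:ravcor2nd3}. A standard law of large numbers for $S_j/j\to 1$ gives $\widetilde Z_n(i)=mW_i\log(n/i)(1+o(1))$ uniformly in $i$, reducing the analysis to $\max_{\inn}mW_i\log(n/i)$.

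For the~\ref{ass:weighttaufin} sub-case with $\tau\le 1$, I would parametrize $i=n^\gamma e^\delta$. Because $\tau\gamma=1-\gamma$, the linear term in $\delta$ cancels at the optimum and
\[
b_i\log(n/i)=(1-\gamma)b_{n^\gamma}\log n-\frac{\tau+1}{2}a_{n^\gamma}\delta^2+O(a_{n^\gamma}\delta^3/\log n).
\]
Writing $W_i=b_i+a_iY_i$ with Gumbel tail $\P{Y_i>y}\sim e^{-y}/i$, the event that $\zni-m(1-\gamma)b_{n^\gamma}\log n$ exceeds $x\cdot m(1-\gamma)a_{n^\gamma}\log n$ reduces, up to lower-order corrections, to $Y_i>x+\frac{\tau+1}{2(1-\gamma)\log n}\delta^2$. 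Summing tail probabilities over $i$ via $di=e^\delta n^\gamma\,d\delta$ yields the expected count
\[
e^{-x}\int_{-\gamma\log n}^{(1-\gamma)\log n}e^{-(\tau+1)\delta^2/(2(1-\gamma)\log n)}\,d\delta=e^{-x}\sqrt{\frac{2\pi(1-\gamma)\log n}{\tau+1}}(1+o(1)),
\]
which is of order $1$ exactly when $x=\tfrac{1}{2}\log\log n+O(1)$. Markov's inequality yields the upper bound of~\eqref{eq:nottight}; the matching lower bound follows from Chebyshev, since by independence of the $W_i$ the variance of the count is bounded by its mean, and thus the count is positive with high probability when $x=(\tfrac12-\eps)\log\log n$.

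For the~\ref{ass:weighttau0} sub-cases, I would parametrize $i=i^*e^\delta$ with $\log i^*=\log n-L$, $L=\log(1/t_n)$, and $\beta_1=(c_1/\tau)(\log i^*)^{1/\tau-1}$ satisfying $\beta_1L=1$. Since $|\beta_k|\ll 1/(kL^k)$ for $k\ge 2$, the expansion simplifies to
\[
\log\frac{b_iL_n(i)}{b_{t_nn}L}=-\frac{\beta_1^2\delta^2}{2}-\frac{\beta_1^3\delta^3}{3}+\text{smaller}.
\]
For $\tau\in(1,3]$, after substituting $\delta=\sqrt{L}\eta$ the cubic term is subleading and the Gaussian integral gives expected count $e^{-x}\sqrt{2\pi L}(1+o(1))$; together with $\log L=(1-1/\tau)\log\log n+O(1)$, this delivers the constant $\tfrac12(1-1/\tau)$ in~\eqref{eq:ravcor2nd}, with the Chebyshev lower bound proceeding as before. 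For $\tau>3$, however, the scale $(\log n)^{1-3/\tau}$ exceeds $\log\log n$, and the cubic correction $-\beta_1^3\delta^3/3$ no longer merely shifts the Gumbel location by $O(1)$. A more careful Laplace-type evaluation of $\int\exp\{-\beta_1^2\delta^2/2-\beta_1^3\delta^3/3\}\,d\delta$, combined with sharper control of the error terms in $\log b_i$ and in the RaV tail expansion of $\log\P{W>z}$, then produces the deterministic constant $-\tau(\tau-1)^2/(2c_1^3)$ in~\eqref{eq:ravcor2nd3}.

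The main obstacle will be the~\ref{ass:weighttau0} case with $\tau>3$: the convergence-to-PPP tool used in the proof of Theorem~\ref{Thrm:maxsecond} is unavailable because the optimal window of indices is too wide to yield a compact Poissonian limit. The argument will instead require explicit bookkeeping of the cubic and higher-order corrections in the Taylor expansions of both $b_i\log(n/i)$ and $\log\P{W>z}$, to verify that the cubic term is indeed the dominant subleading contribution at the $(\log n)^{1-3/\tau}$ scale and produces precisely the stated constant.
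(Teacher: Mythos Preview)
Your overall strategy—reduce to $\max_i mW_i\log(n/i)$ via concentration, then analyse the latter by a first/second moment argument on tail counts with a Gaussian integral over the offset $\delta$—is essentially the paper's route (Propositions~\ref{prop:condmeantaufin} and~\ref{prop:condmeantau0} combined with Proposition~\ref{lemma:wrrtconcentration}). For~\eqref{eq:ravcor2nd} and for~\eqref{eq:nottight} with $\tau<1/2$ your outline would go through with only cosmetic differences.

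There is, however, a genuine gap in the concentration step for~\eqref{eq:nottight} when $\tau\in[1/2,1]$. A Bernstein bound plus a union over $\inn$ yields
\[
\Pf{|\zni-\widetilde Z_n(i)|\ge t}\le 2\exp\Big(-\frac{t^2}{2(\max_j\widetilde Z_n(j)+t)}\Big),
\]
and with $t\asymp a_{n^\gamma}\log n\,\log\log n\asymp(\log n)^{1/\tau}\log\log n$ and $\max_j\widetilde Z_n(j)\asymp(\log n)^{1+1/\tau}$ the exponent is only $\asymp(\log n)^{1/\tau-1}(\log\log n)^2$, which beats $\log n$ precisely when $\tau<1/2$. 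For $\tau\in[1/2,1]$ the union bound diverges and your argument collapses. The paper does \emph{not} control $\max_i|\zni-\widetilde Z_n(i)|$ here; it bounds $|\max_i\zni-\max_i\widetilde Z_n(i)|$ directly by stratifying $[n]$ according to the size of $W_i/b_i$: indices with $W_i<(1-\eps_n^{p_k})b_i$ have conditional mean a factor $(1-\eps_n^{p_k})$ below the maximum, so one can afford a deviation of order $\eps_n^{p_k}\max_j\widetilde Z_n(j)$ there, while the few remaining indices (of cardinality $\exp\{O((\log n)^{1-cp_k})\}$) tolerate the weaker bound. A recursive choice of exponents $p_k$ makes the scales interlock; for $\tau=1$ the number of strata must even grow with $n$. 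This is the substantive obstacle in the proof and your plan does not address it.

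A smaller issue concerns~\eqref{eq:ravcor2nd3}. You attribute the constant $-\tau(\tau-1)^2/(2c_1^3)$ to the cubic term $-\beta_1^3\delta^3/3$ in your expansion of $\log(b_iL_n(i))$. In the paper the constant arises instead from a residual \emph{linear} term: $\beta_1L=1$ only to leading order, and the correction (coming from $\log(t_nn)\ne\log n$ and from lower-order pieces of $b_n$) leaves a linear coefficient $\e_n\asymp(\tau-1)/\log b_n$ in the exponent of $\P{W>\cdot}$. Optimising $\e_n\delta-d_n\delta^2$ gives $\e_n^2/(4d_n)=\tfrac{\tau(\tau-1)^2}{2c_1^3}(\log n)^{1-3/\tau}$, which is exactly the shift in~\eqref{eq:ravcor2nd3}. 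The cubic term you single out is of lower order on the relevant window $|\delta|\lesssim(\log n)^{1-2/\tau}$; your Laplace heuristic would need to track the sub-leading linear piece rather than the cubic one.
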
 

\begin{remark}
Although the result in~\eqref{eq:gumbppplim2ii} only holds for $\tau\in(0,1)$, the result in~\eqref{eq:nottight} turns out to hold for $\tau=1$ as well. This slight deviation is due to the fact that the additional $\log\log n$ term allows us to prove concentration of the maximum degree around the maximum conditional mean degree, which cannot be done with the second-order rescaling in~\eqref{eq:gumbppplim2ii} when $\tau=1$.
\end{remark}

As mentioned above {\cb in Remark~\ref{rem:gumbel_tau_large},} the problem of capturing the second-order fluctuations in the~\ref{ass:weightgumbel}-\ref{ass:weighttaufin} case when $\tau > 1$
and for lighter tailed weights (including bounded weights) requires different techniques
and is currently on-going research, see~\cite{EslLodOrt21} and~\cite{Lod21}.

\begin{remark}[More general model formulation]
As in~\cite{LodOrt20}, it is possible to prove some of the results for a more general class of models. More specifically, the results in Theorems~\ref{Thrm:wrrtdegree} and~\ref{thrm:maxfrechet} hold for a growing network that satisfies the following conditions as well: let $\Delta \zni:=\Zm_{n+1}(i)-\zni$. For all $n\in\N$:
\begin{enumerate}[label=($\mathrm{A}$\arabic*)]
	\item \label{Ass:gwrrtA1} $\Ef{}{\Delta \zni}= \F_i/S_n\ind_{\{\inn\}}.$
	\item \label{Ass:gwrrtA4} For all $k\in\N$, there exists a $C_k>0$ such that \[\Ef{\big}{\prod_{j=0}^{k-1}(\Delta \Zm_n(i)-j)}\leq C_k\Ef{}{\Delta \zni}.\]
	\item \label{Ass:gwrrtA3} $\sup_{i=1,\ldots,n} n\big|\Pf{\Delta \Zm_n(i)=1}-\Ef{}{\Delta \Zm_n(i)}\big| \toas 0.$
	\item \label{Ass:gwrrtA5} Conditionally on $(\F_i)_{i\in\N}$, $\{\Delta \Zm_n(i)\}_{i\in[n]}$ is negatively quadrant dependent in the sense that for any $i\neq j$ and $k,l\in\Z^+$,
	\be \label{eq:gwrrtAssA5}
	\Pf{\Delta \Zm_n(i)\leq k,\Delta \Zm_n(j)\leq l}\leq \Pf{\dzn{i}\leq k}\Pf{\dzn{j}\leq l}.
	\ee
\end{enumerate}
If we further assume that $\Delta \zni\in\{0,1\}$ then all the results presented in this paper hold as well.
\end{remark}

\textbf{Outline of the paper}\\
In Section~\ref{sec:overview} we provide a short overview and explain the intuitive idea of the results. We then prove Theorems~\ref{Thrm:wrrtdegree} and~\ref{thrm:pkasymp} in Section~\ref{sec:degree}. In Section~\ref{sec:meandeg}, we state and prove several propositions regarding the maximum conditional mean degree. We then discuss under which scaling the maximum degree concentrates around the maximum conditional expected degree in Section~\ref{sec:conc}. Finally, we use these results in Section~\ref{sec:mainproof} to prove the main results, Theorems~\ref{thrm:maxbdd},~\ref{thrm:maxfrechet},~\ref{thrm:maxgumb},~\ref{Thrm:maxsecond} and~\ref{Thrm:maxsecond2}.

\section{Overview of the proofs}\label{sec:overview}

First, since the proof of Theorem~\ref{Thrm:wrrtdegree} heavily relies on the proof of Theorem $2.4$ in~\cite{LodOrt20}, we refer to~\cite[Section $3$]{LodOrt20} for an overview of its proof. The same holds for Theorem~\ref{thrm:maxbdd}, which follows the same strategy as the proof of Theorem $2$ in~\cite{DevLu95} but where we need to take extra care because of the random weights. Finally, the proof of Theorem~\ref{thrm:pkasymp} is mainly computational in nature and we therefore do not include an overview in this section.

Here, we provide an intuitive idea of the proof of Theorems~\ref{thrm:maxfrechet} and~\ref{thrm:maxgumb} regarding first-order scaling limits of the maximum degree, as well as Theorems~\ref{Thrm:maxsecond} and~\ref{Thrm:maxsecond2} regarding second-order scaling limits. In the first two theorems, the main idea consists of two ingredients: We first consider the asymptotics of the conditional expected degree  $\Ef{}{\zni}$ of a vertex $\inn$, where we  condition on the weights $(W_i)_{i \in [n]}$. Then in a second step, we show that the degrees concentrate around their conditional expected values.

More precisely for the concentration argument, we show that
\be \label{eq:concentration}
|\max_{\inn}\zni -\max_{\inn}\Ef{}{\zni}|/g_n \toinp 0,
\ee 
for some suitable sequence $(g_n)_{n\in\N}$ such that $g_n$ diverges with $n$. What sequence $g_n$ is sufficient depends on the different vertex-weight distribution cases, as outlined in Assumption~\ref{ass:weights}. For completeness, $g_n=mb_n\log n,g_n=m(1-\gamma)b_{n^\gamma}\log n, g_n=m b_{t_nn}\log(1/t_n), g_n=u_n$ and $g_n=n$ for the~\ref{ass:weightgumbel}-\ref{ass:weighttauinf},~\ref{ass:weightgumbel}-\ref{ass:weighttaufin},~\ref{ass:weightgumbel}-\ref{ass:weighttau0} sub-cases and the~\ref{ass:weightfrechet} case with $\alpha>2$ and $\alpha\in(1,2)$, respectively. \eqref{eq:concentration} follows by applying standard large deviation bounds to $|\zni-\Ef{}{\zni}|$ for all $\inn$, as in Proposition~\ref{lemma:wrrtconcentration}. 
We can also construct
a concentration argument when $g_n$ is equivalent to the second-order growth rate of the maximum as in Theorems~\ref{Thrm:maxsecond} and~\ref{Thrm:maxsecond2}, however
in this case a more careful analysis of the different terms in the large deviation bounds is required.

The bulk of the argument for our results is to show that the conditional expected degree behaves as we claim in the main results. For the first-order asymptotics as in Theorem~\ref{thrm:maxgumb}, we have 
\be 
\max_{\inn}\frac{\Ef{}{\zni}}{g_n}\toinp 1,
\ee 
with $g_n$ as described above for the~\ref{ass:weighttauinf},~\ref{ass:weighttaufin} and~\ref{ass:weighttau0} sub-cases, whereas in~\ref{thrm:maxfrechet} we have
\be \ba
\max_{\inn}\frac{\Ef{}{\zni}}{u_n}&\toindis m\max_{(t,f)\in\Pi}f\log(1/t),\\ \max_{\inn}\frac{\Ef{}{\zni}}{n}&\toindis m\max_{(t,f)\in\Pi} f \int_t^1\bigg(\int_{(0,1)\times(0,\infty)} g\ind_{\{u\leq s\}}\d\, \Pi(u,g)\bigg)^{-1}\,\d s, 
\ea \ee 
depending on whether $\alpha > 2$ or $\alpha \in (0,1)$.
Together with~\eqref{eq:concentration}, the results in Theorems~\ref{thrm:maxfrechet} and~\ref{thrm:maxgumb} follow. For the~\ref{ass:weightgumbel}-\ref{ass:weighttaufin} sub-case (and probably the~\ref{ass:weightgumbel}-\ref{ass:weighttauinf} sub-case, too), the result can be strengthened to almost sure convergence. 

Let us delve a bit more into why the maximum conditional mean in-degree has the limits as claimed above. We stress that, as stated in Remark~\ref{remark:def}, $\E{\F}=1$ for the cases we discuss here. It is clear from the definition of the model that
\be 
\Ef{}{\zni}=m\F_i\sum_{j=i}^{n-1}1/S_j\approx m\F_i\log(n/i),
\ee 
for any $\inn$. Let us start with the~\ref{ass:weightgumbel}-\ref{ass:weighttaufin} sub-case, where we recall that $\gamma:=1/(1+\tau)$. If we set
\be 
\Pi_n:=\sum_{i\geq 1}\delta_{(i/n,(\F_i-b_n)/a_n)},
\ee 
where $\delta$ is a Dirac measure, then classical extreme value theory tells us that (see e.g.~\cite{Res13}),
\be \label{eq:weakconv}
\Pi_n{\cb \toindis} \Pi,
\ee 
where $\Pi$ is a Poisson point process on $(0,\infty)\times \R$ with intensity measure $\nu(\d t,\d x):= \d t \times \e^{-x}\d x$. Then, if we consider $i=t\ell(n) n^\gamma $ and $(\F_i-b_{\ell(n)n^\gamma })/a_{\ell(n)n^\gamma }=f$ where $\ell$ is a strictly positive function such that $\log(\ell(n))^2/\log n$ converges, it follows that 
\be \ba\label{eq:approxmean}
\Ef{}{\zni}&\approx m\F_i\log(n/i)=m(b_{\ell(n)n^\gamma }+a_{\ell(n)n^\gamma }f)\log (1/(tn^{1-\gamma }\ell(n)))\\
&\approx mc_1\gamma ^{1/\tau}(1-\gamma )(\log n)^{1/\tau+1}+mc_2\gamma ^{1/\tau-1}( f(1-\gamma )-\tau\gamma  \log t)(\log n)^{1/\tau}\\
&\approx m (1-\gamma )b_{n^\gamma }\log n+m(1-\gamma )a_{n^\gamma }\log n(f-\log t),
\ea \ee 
when using that $a_n=c_2(\log n)^{1/\tau-1}$ and $b_n\sim c_1(\log n)^{1/\tau}$, using a Taylor approximation and leaving out all lower order terms. This yields the first-order behaviour of the maximum conditional mean in-degree as well as its location $n^{\gamma(1+o(1))}$. This is proved rigorously in Proposition~\ref{prop:condmeantaufin}.

For the~\ref{ass:weighttauinf} sub-case, as in Proposition~\ref{prop:condmeantauinf}, a similar approach as in~\eqref{eq:approxmean} can be applied, though we set $i=tn^\beta$, $(W_i-b_{n^\beta})/a_{n^\beta}=f$ for any $\beta\in(0,1)$. We divide the right-hand side of~\eqref{eq:approxmean} by $b_n\log n$ and observe that $b_{n^\beta}=\ell(\log (n^\beta))=\ell(\beta\log n)$, so that it follows that $b_{n^\beta}/b_n$ converges to $1$ with $n$ for any fixed $\beta\in(0,1)$ since $\ell$ is slowly varying at infinity. Then, the constant in front of the leading term is decreasing in $\beta$, so that taking the limit $\beta\downarrow 0$ yields the required result. 

Then, for the~\ref{ass:weighttau0} sub-case, as in Proposition~\ref{prop:condmeantau0}, we realise that the location of the maximum should grow faster than $n^\gamma$ for any $\gamma\in(0,1)$, as the tails of these distribution are heavier than those of any distribution in the~\ref{ass:weighttaufin} sub-case. By a similar argument as for the~\ref{ass:weighttauinf} sub-case and using that now $b_{n^\beta}/b_n$ converges to $0$ with $n$, one might want to set $\beta=1$, that is, the location of the maximum degree is of order $n$. However, this would imply that the growth rate of the maximum expected degree should be $b_n$. This is not the case, however, since for any $t\in (0,1)$, approximately,
\be 
\max_{\inn}\Ef{}{\zni}/b_n\geq (\max_{i\in[tn]}\F_i/b_{tn})\log(1/t) b_{tn}/b_n.
\ee 
Since $b_{tn}/b_n$ converges to $1$ for $t$ fixed ($b_n$ is slowly varying) and the maximum converges to $1$ in probability, letting $t$ tend to $0$ shows scaling by just $b_n$ is insufficient. 
To find the correct behaviour, we need to let $t$ tend to zero with $n$, i.e.\ $t=t_n$, such that $b_{t_nn}/b_n$ has a non-trivial limit (not $0$ or $1$). The sequence that satisfies this requirement is $t_n=\exp(-\tau\log n/\log(b_n))$. This suggests that the location of the maximum degree is $t_nn$ and that the maximum degree grows as $b_{t_nn}\log(1/t_n)$. 

The second-order growth rate for the~\ref{ass:weighttau0} sub-case, as in Theorem~\ref{Thrm:maxsecond}, is obtained in a similar way as in~\eqref{eq:approxmean}, where we now consider $i=st_nn$ and $(\F_i-b_{t_nn})/a_{t_nn}=f$ for some $(s,f)\in(0,\infty)\times \R$. This yields
\be \ba
\Ef{}{\zni}&\approx m\F_i\log(n/i)=m(b_{t_nn}+a_{t_nn}f)\log(1/st_n)\\
&= mb_{t_nn}\log(1/t_n)+(ma_{t_nn}\log(1/t_n)f-mb_{t_nn}\log s)+ma_{t_nn}f\log(1/s).
\ea \ee 
Here, the first order again appears in the first term on the right-hand side, and the second order can be obtained by realising that $b_{t_nn}/(a_{t_nn}\log(1/t_n))\to 1$ as $n$ tends to infinity. A similar approach using the weak convergence of $\Pi_n$ to $\Pi$ as in~\eqref{eq:weakconv} allows us to obtain the required limits.  For the results of the~\ref{ass:weighttaufin} sub-case in Theorem~\ref{Thrm:maxsecond}, we take a more in-depth look at the approximation in~\eqref{eq:approxmean}. First, when subtracting the first term on the right-hand side and dividing by $m (1-\gamma )a_{n^\gamma }\log n$, we are left with exactly the functional which is used in the maximum over the Poisson point process in~\eqref{eq:gumbppplim2ii}. When combining this with the weak convergence of $\Pi_n$ to $\Pi$ the desired result follows. To understand how the additional term in~\eqref{eq:gumbppplim2ii} arises, we include an extra lower order term in the approximation in~\eqref{eq:approxmean}. That is,
\be \ba\label{eq:approx2}
\Ef{}{\zni}\approx{}& mc_1\gamma^{1/\tau}(1-\gamma)(\log n)^{1/\tau+1}+mc_2\gamma^{1/\tau-1}( f(1-\gamma)-\tau\gamma \log t)(\log n)^{1/\tau}\\
&-mc_1\frac{\gamma^{1/\tau}}{1-\gamma}\frac{1+\tau}{2}(\log n)^{1/\tau-1}\log(\ell(n))^2.
\ea \ee 
Hence, the requirement that $\lim_{n\to\infty}\log(\ell(n))^2/\log n=\zeta_0$ ensures that the last term on the right-hand side does not grow faster than $(\log n)^{1/\tau}$. Also, when divided by the second-order growth rate $m(1-\gamma)a_{n^\gamma}\log n$, the last term converges to $-(1+\tau)^2\zeta_0/(2\tau)$, exactly the additional term found in~\eqref{eq:gumbppplim2ii}. 

Somewhat surprising is that for both the~\ref{ass:weighttaufin} and~\ref{ass:weighttau0} sub-cases, when we consider not a compact window around the optimal index but instead all $\inn$, we find that the second-order correction as suggested above is insufficient, as can be observed in~\eqref{eq:nottight} and~\eqref{eq:ravcor2nd} in Theorem~\ref{Thrm:maxsecond2}. The reason this behaviour is observed, informally, is that we can move away even further from what one would expect to be the optimal window, i.e.\ $\ell(n)n^\gamma$ for $\ell$ that do not grow or decay `too quickly' in the~\ref{ass:weighttaufin} sub-case and $t_nn$ in the~\ref{ass:weighttau0} sub-case, and find higher degrees. As we have observed above, when setting $s=0,t=\infty$ in the limit in~\eqref{eq:gumbppplim2ii}, which would mimic considering all $\inn$ rather than the indices in a compact window around $n^\gamma$, we obtain $\sup_{(v,w)\in\Pi}w-\log v$ (when we would set $\zeta_0=0$). It is readily checked that integrating the intensity measure $\nu$ over $\R_+\times \R$ yields an infinitely large rate. Hence, $\sup_{(v,w)\in\Pi}w-\log v=\infty$ almost surely, so that the second-order scaling $(1-\gamma)a_{n^{\gamma}}\log n$ is insufficient when considering all $\inn$.

A more insightful argument is the following: consider the PPP limit as in~\eqref{eq:gumbppplim2ii} (with $\zeta_0=0$) and~\eqref{eq:rav2ndorder}. Its distribution depends only on the ratio $t/s$. Thus, for any integer $j\in\N$ such that $j=o(\sqrt{\log n})$,
\be 
\max_{\e^{j-1}n^\gamma\leq i\leq \e^jn^\gamma}\frac{\zni-m(1-\gamma)b_{n^\gamma}\log n}{m(1-\gamma)a_{n^\gamma}\log n}\toindis \Lambda_j,
\ee  
where the $(\Lambda_j)_{j\in\N}$ are i.i.d.\ standard Gumbel random variables (where the location parameter equals $0$). Their independence follows from the independence property of the PPP. Now, as the Gumbel distribution satisfies the~\ref{ass:weightgumbel}-\ref{ass:weighttaufin} sub-case with $\tau=1,c_1=1,b=0$, we can argue that, for any $\eta>0$ and $x\in\R$,
\be \ba
\mathbb P\bigg({}&\max_{n^\gamma\leq i\leq \e^{(\log n)^{1/2-\eta}}n^\gamma}\frac{\zni-m(1-\gamma)b_{n^\gamma}\log n}{m(1-\gamma)a_{n^\gamma}\log n}\leq x(1/2-\eta)\log\log n\bigg)\\
&=\P{\max_{1\leq j\leq(\log n)^{1/2-\eta}} \max_{\e^{j-1}n^\gamma\leq i\leq \e^jn^\gamma}\frac{\zni-m(1-\gamma)b_{n^\gamma}\log n}{m(1-\gamma)a_{n^\gamma}\log n}\leq x(1/2-\eta)\log\log n}\\
&\approx \P{\max_{1\leq j\leq (\log n)^{1/2-\eta} }\Lambda_j\leq x(1/2-\eta)\log\log n}\\
&=\P{\Lambda_1\leq x(1/2-\eta)\log\log n}^{(\log n)^{1/2-\eta}}\\
&=\exp(-(\log n)^{1/2-\eta}\exp(-x(1/2-\eta)\log\log n)),
\ea \ee   
which has limit $1$ (resp.\ $0$) if $x>1$ (resp.\ $x<1$). Then, as we can choose $\eta$ arbitrarily close to $0$, the result in~\eqref{eq:nottight} follows. Here it is essential that $j=o(\sqrt{\log n})$ to obtain the correct limit. Note that making the approximation $\approx$ in the above argument rigorous is the highly non-trivial part of the argument. The reason is that we cannot rely on the elegant theory of convergence to a Poisson point processes, but 
have to explicitly control the errors made in this approximation.

A similar reasoning can be applied for the~\ref{ass:weightgumbel}-\ref{ass:weighttau0} when $\tau\in(1,3]$, where now $j=o((\log n)^{(1-1/\tau)/2})$ needs to be satisfied. When $\tau>3$, more care needs to be taken of lower-order terms that appear in the double exponent, yielding a different scaling.

For Theorem~\ref{thrm:maxfrechet}, a similar approach can be used. We now set 
\be 
\Pi_n:=\sum_{i=1}^n \delta_{(i/n,\F_i/u_n)},
\ee 
where again $\Pi_n\Rightarrow \Pi$, with $\Pi$ a Poisson point process on $(0,1)\times (0,\infty)$ with intensity measure $\nu(\d t, \d x):=\d t\times (\alpha-1)x^{-\alpha}\d x$. Now considering $i=tn, \F_i=fu_n$, yields
\be 
\Ef{}{\zni/u_n}\approx m\frac{\F_i}{u_n}\log(n/i)=mf\log(1/t), 
\ee 
which is the functional in the maximum in~\eqref{eq:wrrtppplimit}. Again, combining this with the weak convergence of $\Pi_n$ yields the result. Finally, the heuristic idea for~\eqref{eq:wrrtinfmeanppplimit} is contained in~\cite[Section $3$]{LodOrt20} as well, since the rescaled maximum degree in the preferential attachment model with additive fitness studied there has the same distributional limit when $\alpha\in(1,2)$.

\section{The limiting degree sequence of weighted recursive graphs}\label{sec:degree}

In this section we prove Theorems~\ref{Thrm:wrrtdegree} and~\ref{thrm:pkasymp}. {\cb For more in-depth details regarding this entire proof, we refer to the similar proof in~\cite[Theorem $2.4$]{LodOrt20}, on which this proof is based.} We first state a result which is crucial for the proof of the former:

{\cb 
\begin{lemma}[Lemma $3.1$, \cite{DerOrt14}]\label{lemma:stochapprox}
	Let $(X_n)_{n\geq 0}$ be a non-negative stochastic process. Suppose that 
	\be 
	X_{n+1}-X_n\leq \frac{1}{n+1}(A_n-B_nX_n)+(R_{n+1}-R_n)
	\ee 
	holds almost surely, where 
	\begin{itemize}
		\item $(A_n)_{n\geq 0}$ and $(B_n)_{n\geq 0}$ are almost surely convergence stochastic processes with deterministic limits $A,B>0$, respectively. 
		\item $(R_n)_{n\geq 0}$ is an almost surely convergent stochastic process.
	\end{itemize}
	Then, almost surely,
	\be 
	\limsup_{n\to\infty} X_n \leq \frac AB. 
	\ee 
	If instead, a lower bound applies to $X_{n+1}-X_n$, we can bound $\liminf_{n\to\infty}X_n$ from below by $A/B$ almost surely.
	\end{lemma} }
	
	\begin{proof}[Proof of Theorem~\ref{Thrm:wrrtdegree}]
{\cb We first prove~\eqref{eq:gammaconv}-~\eqref{eq:pk}, for which we can, without loss of generality, assume that $\E W=1$ as $\E W<\infty$ is satisfied. It then follows that $S_n/n\toas 1$. We let $0\leq f<f'<\infty$, $\mathbb F=[0,\infty)$, and set $\I_n:=\{\inn: W_i\in(f,f']\}$ and $X_n:=\Gamma_n((f,f'])=\tfrac1n \sum_{i\in\I_n}\zni$. By expressing $\E{X_{n+1}\,|\, \G_n}$ in terms of $X_n$ and the vertex-weights, we can obtain the bounds
	\be \ba
	X_{n+1}-X_n&\geq \frac{1}{n+1}\Big(-X_n+\frac{\I_n}{n}\frac{mf}{S_n/n}\Big)+(R_{n+1}-R_n),\\
	X_{n+1}-X_n&\leq \frac{1}{n+1}\Big(-X_n+\frac{\I_n}{n}\frac{mf' }{S_n/n}\Big)+(R_{n+1}-R_n),
	\ea\ee 
	where $R_{n+1}-R_n:=X_{n+1}-\E{X_{n+1}\,|\, \G_n}$. Using the law of large numbers and Lemma~\ref{lemma:stochapprox}, this results in the upper and lower bound,
	\be \ba
	\liminf_{n\rightarrow \infty}X_n\geq mf\mu((f,f']),\qquad
	\limsup_{n\rightarrow \infty}X_n\leq mf'\mu((f,f']),
	\ea\ee 
	almost surely, if we assume that $R_n$ converges almost surely. We show this assumption holds at the end of the proof. We now take a countable subset $\mathbb F\subset [0,\infty)$, such that $\mathbb F$ is dense and for each $f\in \mathbb F, \mu(\{f\})=0$. It follows that there exists an almost sure event $\Omega_0$ on which both bounds hold for any pair $f,f'\in \mathbb F$ such that $f<f'$. Then, take an arbitrary set open set $U$ and approximate $U$ from below by sequence of sets $(U_j)_{j\in\N}$, where each $U_j$ is a finite union of intervals $(f,f']$, with $f,f'\in\mathbb F$. Then, for any $j\in\N$, a Riemann approximation yields 
	\be 
	\liminf_{n\to\infty} \Gamma_n(U)\geq \liminf_{n\to\infty}\Gamma_n(U_m)\geq \Gamma(U_m), \qquad \text{on }\Omega_0. 
	\ee 
	The monotone convergence theorem thus implies that $\liminf_{n\to\infty} \Gamma_n(U)\geq \Gamma(U)$ almost surely. A similar argument for any closed set $C$ yields $\limsup_{n\to\infty}\Gamma_n(C)\leq \Gamma(C)$. The Portmenteau lemma finally provides us with the almost surely convergence in the weak$^*$ topology. 
	
}

In the remainder of the proof, we let $X_n:=\Gamma_n^{(k)}((f,f'])=\tfrac 1n\sum_{i\in\I_n}\ind_{\{\zni=k\}}$ and use a proof by induction. So, we assume that $\Gamma_n^{(j)}$ converges almost surely to $\Gamma^{(j)}$ in the weak$^*$ topology. By expressing $\E{X_{n+1}\,|\, \G_n}$ in terms of $X_n$ and the vertex-weights, we can obtain a lower bound
\be 
X_{n+1}-X_n\geq \frac{1}{n+1}(A_n-B_n'X_n)+(R_{n+1}'-R_n'),
\ee
where $A_n,B_n'$ almost surely converge to 
\be 
A:=m\int_{(f,f']}x\; \Gamma^{(k-1)}(\d x),\qquad B':=\frac{1/m+f'}{1/m},
\ee  
respectively, and where $R_{n+1}'-R_n':=X_{n+1}-\E{X_{n+1}\,|\, \G_n}$ (we again show the convergence of $R_n'$ for each $k\in\N_0$ at the end of the proof). With a similar approach, an upper bound on the recursion $X_{n+1}-X_n$ can be obtained with sequences $A_n,B_n$ that converge to $A$ and $B$, respectively, with $B=1+mf$. Now, applying Lemma~\ref{lemma:stochapprox} yields
\be \ba
\liminf_{n\rightarrow \infty}X_n&\geq \frac{A}{B'}=\frac{1}{1/m+f'} \int_{(f,f']}x\;\Gamma^{(k-1)}(\d x),\\
\limsup_{n\rightarrow \infty}X_n&\leq \frac{A}{B}=\frac{1}{1/m+f}\int_{(f,f']}x\;\Gamma^{(k-1)}(\d x).
\ea\ee 
The same argument as used for $\Gamma_n$ then yields
\be 
\Gamma^{(k)}(\d x)=\Big(\frac{x}{x+1/m}\Big)^{k} \Gamma^{(0)}(\d x). 
\ee 
With minor adjustments, it also follows that
\be 
\Gamma^{(0)}(\d x)=\frac{1/m}{x+1/m}\mu(\d x),
\ee 
from which~\eqref{eq:gammaconv},~\eqref{eq:gammalimit} and~\eqref{eq:pk} follow. 

{\cb It thus remains to prove the convergence of $R_n$ and $R_n'$. First, 
	\be\ba 
	R_{n+1}-R_n & =\frac{1}{n+1}\sum_{i\in\I_n}(\Zm_{n+1}(i)-\E{\Zm_{n+1}(i)\,|\, \G_n})\\ & =\frac{1}{n+1}\sum_{i\in\I_n}(\Delta \zni-\E{\Delta \zni\,|\, \G_n}), 
	\ea \ee 
	where $\Delta \zni:=\Zm_{n+1}(i)-\zni$. Then, we can bound
	\be \ba 
	\E{(R_{n+1}-R_n)^2\, |\, \G_n}&=\frac{1}{(n+1)^2}\mathbb E \bigg[\Big(\sum_{i\in\I_n}(\Delta \zni-\E{\Delta \zni\,|\, \G_n})\Big)^2\,\bigg|\, \G_n\bigg]\\
	&\leq \frac{1}{(n+1)^2}\sum_{i\in\I_n}\Var(\Delta\zni\,|\, \G_n).
	\ea \ee 
	Here, the second step follows from the negative quadrant dependence of the random variables $(\Delta \zni)_{\inn}$, in the sense that for any distinct $i,j\in[n]$ and any $k,\ell\in\N_0$, 
	\be 
	\P{\Delta\zni\geq k, \Delta \Zm_n(j)\geq \ell\,|\, \G_n}\leq \P{\Delta\zni\geq k\,|\, \G_n}\P{\Delta\Zm_n(j)\geq \ell\,|\, \G_n}. 
	\ee 
	As the $\Delta \zni$ are the outcome of a multinomial experiment (conditionally on the vertex-weights, in particular conditionally on $\G_n$) this property directly follows from~\cite[Section $3.1$]{DevPros83}. When we consider the model with \emph{random} out-degree, we note that all the $\Delta \zni$ are independent indicator random variables, so that the negative quadrant dependence also applies.
	
	It thus remains to bound the conditional variance of $\Delta \zni$ from above. As $\Delta \zni$ is a sum of $m$ i.i.d.\ indicator random variables, its variance is at most $m$ times its mean, for each $\inn$. Since $\sum_{i=1}^n \E{\Delta \zni}=m$, we obtain the upper bound $(m/(n+1))^2$, which is summable almost surely. As this implies that the quadratic variation of the martingale $R_{n+1}-R_n$ {\cb is bounded}, it follows that $R_n$ converges almost surely.  
	
	We now prove the almost sure convergence of $R_n'$. For $k\geq 1$,
	\be
	R_{n+1}'- R_n'=\frac{1}{n+1}\sum_{i\in\I_n}\left(\ind_{\{\Zm_{n+1}(i)=k\}}-\P{\Zm_{n+1}(i)=k\;|\;\G_n}\right)=\Delta M^{(1)}_n-\Delta M_n^{(2)},
	\ee 
	where $\Delta M_n^{(i)}$ is a martingale difference, i.e. $\Delta M_n^{(i)}=M_{n+1}^{(i)}-M_n^{(i)},\ i\in\{1,2\}$, and 
	\be \ba \label{eq:mns}
	\Delta M_{n}^{(1)}&=\frac{1}{n+1}\bigg(\sum_{i\in\I_n}\ind_{\{\Zm_n(i)<k,\Zm_{n+1}(i)\geq k\}}-\mathbb{E}\bigg[\sum_{i\in\I_n}\ind_{\{\Zm_n(i)<k,\Zm_{n+1}(i)\geq k\}}\,\bigg|\,\G_n\bigg]\bigg),\\
	\Delta M_{n}^{(2)}&=\frac{1}{n+1}\bigg(\sum_{i\in\I_n}\ind_{\{\Zm_n(i)\leq k,\Zm_{n+1}(i)> k\}}-\mathbb{E}\bigg[\sum_{i\in\I_n}\ind_{\{\Zm_n(i)\leq k,\Zm_{n+1}(i)> k\}}\,\bigg|\,\G_n\bigg]\bigg).
	\ea \ee 
	Here, we use that 
	\be \ba
	\ind_{\{\Zm_{n+1}(i)=k\}}&=\ind_{\{\Zm_{n+1}(i)=k,\Zm_n(i)\leq k\}}=\ind_{\{\Zm_{n+1}(i)\geq k,\Zm_n(i)\leq k\}}-\ind_{\{\Zm_{n+1}(i)>k,\Zm_n(i)\leq k\}}\\
	&=\ind_{\{\Zm_n(i)=k\}}+\ind_{\{\Zm_{n+1}(i)\geq k,\Zm_n(i)< k\}}-\ind_{\{\Zm_{n+1}(i)>k,\Zm_n(i)\leq k\}}.
	\ea \ee 
	We note that, as the indicators in $M_n^{(1)},M_n^{(2)}$ only differ by one index $k$, it is sufficient to prove the summability of the conditional second moment of $\Delta M_n^{(2)}$ for all fixed $k\geq 1$. So, we write
	\be \ba\label{eq:mn2bound}
	\mathbb{E}\Big[{}&(\Delta M_n^{(2)})^2\,\Big|\,\G_n\Big]\\
	&=\frac{1}{(n+1)^2}\mathbb{E}\bigg[\Big(\sum_{i\in\I_n}\big(\ind_{\{\Zm_n(i)\leq k,\Zm_{n+1}(i)> k\}}-\P{\Zm_n(i)\leq k,\Zm_{n+1}(i)> k|\G_n}\big)\Big)^2\bigg|\G_n\bigg].
	\ea\ee
	Using the negative quadrant dependence of the random variables $(\Delta \zni)_{\inn}$, we can bound this from above by,
	\be\ba\label{eq:Rnconvbound}
	\frac{1}{(n+1)^2}&\sum_{i\in\I_n}\mathbb{E}\Big[\Big(\ind_{\{\Zm_n(i)\leq k,\Zm_{n+1}(i)> k\}}-\P{\Zm_n(i)\leq k,\Zm_{n+1}(i)> k\;|\;\G_n}\Big)^2\,\Big|\,\G_n\Big]\\
	&\leq \frac{1}{(n+1)^2}\sum_{i\in\I_n}\ind_{\{\Zm_n(i)\leq k\}}\P{\Delta \Zm_n(i)\geq 1\;|\;\G_n}\\
	&\leq  \frac{1}{(n+1)^2}\sum_{i=1}^n \E{\Delta \Zm_n(i)\;|\;\G_n}= \frac{m}{(n+1)^2},
	\ea\ee 
	where we use Markov's inequality in the penultimate step and use that the (expected) increments of all in-degrees is exactly $m$. Hence, the final statement is summable almost surely, which proves the almost sure convergence of $ R_n'$. For $k=0$, we can write $R_{n+1}'- R_n'$ as
	\be 
	\Delta M_{n}^{(1)}+\Delta M_{n}^{(2)} +(\ind_{\{\F_{n+1}\in(f,f']\}}-\mu((f,f']))/(n+1),
	\ee 
	where $\Delta M_n^{(1)}=0$ and $\Delta M_n^{(2)}$ is as in \eqref{eq:mns} with $k=0$. We already proved the summability of the second conditional moment of $M_n^{(2)}$ which follows for $k=0$ as well, and the last term has a second conditional moment bounded by $\mu((f,f'])/(n+1)^2$, which is summable too. This proves the almost sure convergence of $R_n'$ for all $k\in\N_0$.
}

We conclude by proving~\eqref{eq:gwrrtEn} for $m=1$ (the proof for $m>1$ follows analogously).  Let $\beta\in(0,(2-\alpha)/(\alpha-1)\wedge 1)$. We then obtain from a union bound that
\be \ba\label{eq:probbound}
\P{E_n^c\cap\{ \F_{U_n}\leq n^\beta\}}&\leq \frac{1}{n}\sum_{j=1}^{n-1}\sum_{k=1}^j n^\beta \E{(1/S_j)\ind_{\{\F_k\leq n^\beta\}}}\leq Cn^{\beta-1}\sum_{j=1}^{n-1} j\E{1/M_j},
\ea\ee 
where we bound $S_j$ from below by the maximum vertex-weight $M_j:=\max_{i\in[j]}\F_i$ and $C>0$ is a constant. We can then bound the expected value of $1/M_j$ by
\be 
\E{1/M_j}\leq \P{M_j\leq j^{1/(\alpha-1)-\eps}}/x_l +j^{-1/(\alpha-1)+\eps}.
\ee 
For $j$ large, say $j>j_0\in\N$, we can bound the probability on the right-hand side from above by 
\be
\P{M_j\leq j^{1/(\alpha-1)-\eps}}\leq \exp(-j^{(\alpha-1)\eps/2}),
\ee 
which leads to the bound
\be 
\E{1/M_j}\leq \ind_{\{j\leq j_0\}}/x_l +\ind_{\{j>j_0\}}(1+1/x_l)j^{-1/(\alpha-1)+\eps}.
\ee 
We then use this in~\eqref{eq:probbound} to obtain
\be 
\P{E_n^c\cap\{ \F_{U_n}\leq n^\beta\}}\leq \wt C n^{\beta-((2-\alpha)/(\alpha-1)\wedge 1)+\eps},
\ee 
for a constant $\wt C>0$. Combining this with $\P{\F_{U_n}\geq n^\beta}=\ell(n^\beta)n^{-\beta/(\alpha-1)}\leq n^{-\beta/(\alpha-1)+\eps}$ for $n$ sufficiently large, by~\cite[Proposition $1.3.6$ $(v)$]{BinGolTeu87}, yields
\be \ba
\P{E_n} & \geq \P{W_{U_n}\leq n^\beta}-\P{E_n^c\cap \{W_{U_n}\leq n^\beta\}}\\ &\geq  1-n^{-\beta(\alpha-1)+\eps}-\wt C n^{\beta-((2-\alpha)/(\alpha-1)\wedge 1)+\eps}.
\ea \ee 
Taking $C=1+\wt C$ and choosing the optimal value of $\beta$, namely $\beta=((2-\alpha)/(\alpha(\alpha-1)))\wedge (1/\alpha)$, yields the desired result and concludes the proof. 
\end{proof}

Prior to proving Theorem~\ref{thrm:pkasymp}, we prove Lemma~\ref{lemma:gumbelgumbel} as stated in Section~\ref{sec:results}, and introduce some general bounds on $p(k)$. 

\begin{proof}[Proof of Lemma~\ref{lemma:gumbelgumbel}]
Without loss of generality, we can set $x_0=1$. The claim relating the Weibull and Fr\'echet maximum domains of attractions follows directly from~\cite[Propositions $1.11$ and $1.13$]{Res13} and the fact that $\P{W\geq 1-1/x}=\P{(1-W)^{-1}\geq x}$.

By~\cite[Corollary $1.7$]{Res13}, the random variable $W$ belongs to the Gumbel MDA if and only if there exist a $z_0<1$ and measurable functions $c,g,f$ 
satisfying $\lim_{x\to 1}c(x)=\widehat c>0$ and $\lim_{x\to 1}g(x)=1$
such that 
\be 
\P{W\geq x}=c(x)\exp\Big(-\int_{z_0}^x \frac{g(t)}{f(t)}\,\d t\Big),\quad z_0<x<1,
\ee  
with $f$, known as the auxiliary function, being absolutely continuous, $f>0$ on $(z_0,1)$ and \\$\lim_{x\to 1}f'(x)=0$. We are required to find a $\wt z_0<\wt x_0$ and measurable functions $\wt c,\wt g,\wt f$ with the same properties for the random variable $(1-W)^{-1}$ to prove one direction.

First, we readily have that $\wt x_0=\sup\{x>0:\P{(1-W)^{-1}\leq x}<1\}=\infty$, so that $(1-W)^{-1}$ has unbounded support. Then, take $\wt z_0=(1-z_0)^{-1}$ and note that $\wt z_0<\infty$ as $z_0<1$. For any $x>\wt z_0$,
\be
\P{(1-W)^{-1}\geq x}=\P{W\geq 1-1/x}=c(1-1/x)\exp\Big(-\int_{\wt z_0}^x\frac{g(1-1/u)}{f(1-1/u)}u^{-2}\,\d u\Big),
\ee 
via a variable transformation $u=(1-t)^{-1}$. We thus set $\wt c(x):=c(1-1/x)$, $\wt g(x):=g(1-1/x)$, and $\wt f(x):=f(1-1/x)x^2$. It follows that $\wt f$ is absolutely continuous and strictly positive on $(\wt z_0,\infty)$. Moreover, 
\be 
\lim_{x\to \infty}\wt c(x)=\lim_{x\to \infty}c(1-1/x)=\lim_{u\uparrow 1}c(u)=\widehat c,\quad 
\lim_{x\to\infty}\wt g(x)=\lim_{u\uparrow 1}g(u)=1,
\ee 
and 
\be
\wt f'(x)=-f'(1-1/x)+2xf(1-1/x)=-f'(1-1/x)+2(1-(1-1/x))^{-1}f(1-1/x).
\ee
Hence, using the result from \cite[Lemma $1.2$]{Res13} that $\lim_{u\uparrow 1}(1-u)^{-1}f(u)=0$, we find that
\be 
\lim_{x\to\infty}\wt f'(x)=-\lim_{u\uparrow 1}f'(u)+2\lim_{u\uparrow 1}(1-u)^{-1}f(u)=0,
\ee 
so that all the conditions of~\cite[Corollary $1.7$]{Res13} are satisfied for the tail distribution of $(1-W)^{-1}$.

For the other direction, we use the other result of~\cite[Lemma $1.2$]{Res13}, which states that \\$\lim_{u\to\infty} u^{-1}f(u)=0$ when $f$ is an auxiliary function for an unbounded distribution belonging to the Gumbel MDA. With similar steps as above, the required result then follows as well.
\end{proof}

\begin{lemma}
For any weight distribution that belongs to the~\ref{ass:weightbdd} case and for any non-negative sequence $(s_k)_{k\in\N}$ such that $s_k\uparrow 1$, when $k$ is sufficiently large,
\be \ba \label{eq:pkboundsbdd}
p(k)&\leq \exp(-(1-\theta_m^{-1})(1-s_k)k)\theta_m^{-k}+(1-\theta_m^{-1})\theta_m^{-k}\P{W\geq s_k},\\
p(k)&\geq \exp(-(1-\theta_m^{-1})k/t_k)\theta_m^{-k}\P{W\geq 1-1/t_k}(1+o(1)), 
\ea \ee 
where $t_k=(1-s_k)^{-1}$, and where the lower bound only holds if $\sqrt k=o(t_k)$ (otherwise the $1+o(1)$ term is to be included in the exponent).

For any weight distribution that has unbounded support and for any non-negative sequence $(s_k)_{k\in\N}$ such that $s_k$ diverges with $k$ such that $s_k\leq k/m$ for all $k\in\N$ sufficiently large, 
\be\ba \label{eq:pkboundsunbdd}
p(k)&\leq \frac{1}{s_k}\Big(\exp\Big(-\frac{k}{ms_k}+\frac{k}{(ms_k)^2}\Big)+\P{W\geq s_k}\Big),\\
p(k)&\geq \Big(1-\frac{1}{ms_k}\Big)^k\int_{s_k}^\infty \frac{1}{1+mx}\mu(\d x) .
\ea\ee  
\end{lemma} 

\begin{proof}
We first prove~\eqref{eq:pkboundsbdd}. We know from Theorem~\ref{Thrm:wrrtdegree}, with $\mu$ the distribution
of $W$, that
\be\label{eq:pkint}
p(k)
=\!\int_0^\infty\!\!\! \frac{\E W/m}{\E W/m+x}\Big(\frac{x}{\E W/m+x}\Big)^k\mu(\d x),
\ee 
for any choice of vertex-weights $W$ such that $\E{W}<\infty$. We recall that $\E{W}/m=\theta_m-1$. It is straightforward to check that the integrand in~\eqref{eq:pkint} is unimodal and maximised at $x=\E{W}k/m$. Thus, for $k>m/\E{W}$, the integrand is increasing and maximal at $x=1$. This directly yields, for some non-negative sequence $(s_k)_{k\in\N}$ such that $s_k\uparrow 1$,
\be \ba\label{eq:pkub}
p(k)&\leq \int_0^{s_k}\Big(\frac{s_k}{\theta_m-1+s_k}\Big)^k\mu(\d x)+\int_{s_k}^1\frac{\theta_m-1}{\theta_m}\theta_m^{-k}\mu(\d x)\\
&\leq \Big(\frac{\theta_m s_k}{\theta_m-1+s_k}\Big)^k\theta_m^{-k}+\frac{\theta_m-1}{\theta_m}\theta_m^{-k}\P{W\geq s_k}\\
&\leq \exp(-(1-\theta_m^{-1})(1-s_k)k)\theta_m^{-k}+(1-\theta_m^{-1})\theta_m^{-k}\P{W\geq s_k}.
\ea \ee 
For a lower bound, we again split the integral at $s_k$, but only keep the second integral. This yields
\be
p(k)\geq (1-\theta_m^{-1})\Big(1-\frac{(\theta_m-1)(1-s_k)}{\theta_m-1+s_k}\Big)^k\theta_m^{-k}\P{W\geq s_k}.
\ee 
We now bound the second term from below by setting $s_k=1-1/t_k$ and by writing
\be\label{eq:pklbterm}
\Big(1-\frac{(\theta_m-1)/t_k}{\theta_m-1/t_k}\Big)^k=\exp\Big(k\log\Big(1-\frac{(\theta_m-1)/t_k}{\theta_m-1/t_k}\Big)\Big)=\exp(-(1-\theta_m^{-1})k/t_k)(1+o(1)),
\ee  
provided that $\sqrt k=o(t_k)$ (otherwise include the $(1+o(1))$ in the exponent) so that we arrive at 
\be \label{eq:pklb}
p(k)\geq(1-\theta_m^{-1})\exp(-(1-\theta_m^{-1})k/t_k)\theta_m^{-k}\P{W\geq 1-1/t_k}(1+o(1)).
\ee
For the proof of~\eqref{eq:pkboundsunbdd}, we have that $x_0=\infty$ and we can now assume, without loss of generality, that $\E W=1$. Hence, the expression in~\eqref{eq:pkint} simplifies to 
\be\ba 
p(k)&=\int_0^\infty \frac{1}{1+mx}\Big(\frac{mx}{1+mx}\Big)^k\mu(\d x)\\
&=\int_0^{s_k}\frac{1}{1+mx}\Big(\frac{mx}{1+mx}\Big)^k\,\mu(\d x)+\int_{s_k}^\infty\frac{1}{1+mx}\Big(\frac{mx}{1+mx}\Big)^k\,\mu(\d x).
\ea \ee 
For the first integral, we notice that the integrand is unimodal and maximised in $x=k/m$. Since $s_k\leq k/m$, we can bound the integral from above by substituting $x=k/m$ in the integrand. By observing that $(1+mx)^{-1}\leq  1/x\leq 1/s_k$ for $x\in(s_k,\infty)$ and that $x\mapsto mx/(1+mx)$ is increasing in $x$ and bounded from above by one, we thus obtain
\be 
p(k)\leq \frac{1}{s_k}\Big(\Big(\frac{ms_k}{1+ms_k}\Big)^k+\P{W\geq s_k}\Big)\leq \frac{1}{s_k}\Big(\exp\Big(-\frac{k}{ms_k}+\frac{k}{(ms_k)^2}\Big)+\P{W\geq s_k}\Big).
\ee 
For a lower bound, we omit the integral over $(0,s_k)$ an instead note that $mx/(1+mx)\geq ms_k/(1+ms_k)$ for $x\geq s_k$. This yields 
\be 
p(k)\geq \Big(\frac{ms_k}{1+ms_k}\Big)^k \int_{s_k}^\infty \frac{1}{1+mx}\mu(\d x)\geq \Big(1-\frac{1}{ms_k}\Big)^k\int_{s_k}^\infty \frac{1}{1+mx}\mu(\d x),
\ee 
which completes the proof.		
\end{proof}

We now prove Theorem~\ref{thrm:pkasymp}. To aid the reader, we split the proof into several parts, based on the different cases presented in Theorem~\ref{thrm:pkasymp}.

\begin{proof}[Proof of Theorem~\ref{thrm:pkasymp}, \ref{thrm:pkbdd} case, Weibull MDA]
We prove~\eqref{eq:pkbddweibull}. Since $W$ belongs to the Weibull MDA (with $x_0=1$), $(1-W)^{-1}$ belongs to the Fr\'echet MDA by Lemma~\ref{lemma:gumbelgumbel}, so that 
\be \label{eq:weibull}
\P{W\geq 1-1/x}=\P{(1-W)^{-1}\geq x}=\ell(x)x^{-(\alpha-1)},
\ee 
for some slowly-varying function $\ell$ and $\alpha>1$. Thus, with the upper bound in~\eqref{eq:pkboundsbdd} we obtain
\be 
p(k)\leq \Big[\exp\big(-(1-\theta_m^{-1})k/t_k\big)+(1-\theta_m^{-1})\ell(t_k)\exp(-(\alpha-1)\log t_k)\Big]\theta_m^{-k},
\ee 
where we recall that $t_k=(1-s_k)^{-1}$ and diverges with $k$. We balance the two terms in the square brackets by setting $t_k=(1-\theta_m^{-1})k/((\alpha-1)\log k)$. This yields
\be \ba 
p(k)&\leq \big( k^{-(\alpha-1)}+(1-\theta_m^{-1})^{2-\alpha}(\alpha-1)^{\alpha-1}\log(k)^{\alpha-1}\ell(t_k) k^{-(\alpha-1)}\big) \theta_m^{-k}\\ 
&=\overline L(k)k^{-(\alpha-1)}\theta_m^{-k},
\ea \ee 
where $\overline L(k):=1+(1-\theta_m^{-1})^{2-\alpha}(\alpha-1)^{\alpha-1}\log(k)^{\alpha-1}\ell(t_k)$. As $t_k$ is regularly varying and $\ell$ is slowly varying, $\ell(t_k)$ is slowly varying by~\cite[Proposition $1.5.7.(ii)$]{BinGolTeu87}.

For a lower bound we use the lower bound in~\eqref{eq:pkboundsbdd} combined with~\eqref{eq:weibull} and set $t_k=k$ to obtain
\be
p(k)\geq \underline L(k)k^{-(\alpha-1)} \theta_m^{-k},
\ee 
where $\underline L(k):=(1-\theta_m^{-1})\e^{-(1-\theta_m^{-1})} \ell(k)$, which concludes the proof.
\end{proof}

\begin{proof}[Proof of Theorem~\ref{thrm:pkasymp}, \ref{thrm:pkbdd}-\ref{ass:weighttaufin} sub-case, Gumbel MDA]	
We use an improved version of the upper bound in~\eqref{eq:pkboundsbdd} to prove~\eqref{eq:pkbddgumbelrv}. 
Recall that $\gamma = 1/(\tau+1)$. Then, we define sequences $t_{k,j}:=d_j c_1^{1-\gamma}((1-\theta_m^{-1})k)^\gamma,j\in[J]$ for constants $d_1<d_2<\ldots <d_J$ and some $J \in \N$ that we will choose at the end. We also write $f(x):=((\theta_m-1)/(\theta_m-1+x))(x/(\theta_m-1+x))^k$ for simplicity. Then, we bound 
\be\label{eq:improvbound}
p(k)\leq \int_0^{1-1/t_{k,1}}f(x)\mu(\d x)+\sum_{j=1}^{J-1}\int_{1-1/t_{k,j}}^{1-1/t_{k,j+1}}f(x)\mu(\d x)+\int_{1-1/t_{k,J}}^1 f(x)\mu(\d x).
\ee 
As $(1-W)^{-1}$ satisfies the~\ref{ass:weighttaufin} sub-case, it follows that
\be \ba
\P{W\geq 1-1/t_{k,j}}&=\P{(1-W)^{-1}\geq t_{k,j}}=(1+o(1))a t_{k,j}^b\e^{-(t_{k,j}/c_1)^{\tau}}\\
&=(1+o(1))a t_{k,j}^b\exp\Big(-d_j^\tau \Big(\frac{(1-\theta_m^{-1})k}{c_1}\Big)^{1-\gamma}\Big ).
\ea \ee 
Also using that $f(x)$ is increasing on $[0,1]$ when $k>1/(\theta_m-1)$ allows us to bound $p(k)$ from above even further by 
{\allowdisplaybreaks 
	\begin{align*}
		f(1{}&-1/t_{k,1})+\sum_{j=1}^{J-1} f(1-1/t_{k,j+1})\P{W\geq 1-1/t_{k,j}}+f(1)\P{W\geq 1-1/t_{k,J}}\\
		\leq{}& (\theta_m-1)\theta_m^{-(k+1)}\bigg[\Big(1-\frac{1}{t_{k,1}}\Big)^k \Big(1-\frac{1}{\theta_m t_{k,1}}\Big)^{-(k+1)}\\
		& +\sum_{j=1}^{J-1}\Big(1-\frac{1}{t_{k,j+1}}\Big)^k \Big(1-\frac{1}{\theta_m t_{ k,j+1}}\Big)^{-(k+1)}
		a t_{k,j}^b\exp\Big(\! -d_j^\tau \Big(\frac{(1-\theta_m^{-1})k}{c_1}\Big)^{1-\gamma}\Big)\\ & +at_{k,J}^b \exp\Big(\!-d_J^\tau \Big(\frac{(1-\theta_m^{-1})k}{c_1}\Big)^{1-\gamma}\Big) \bigg](1+o(1)) \displaybreak \\ 
		\leq{}& (1-\theta_m^{-1})\theta_m^{-k}\bigg[\exp\Big(\!-\frac{1}{d_1}\Big(\frac{(1-\theta_m^{-1})k}{c_1}\Big)^{1-\gamma}\Big)\\
		&+\sum_{j=1}^{J-1}at_{k,j}^b \exp\Big(\!-\big(d_{j+1}^{-1}+d_j^\tau\big)\Big(\frac{(1-\theta_m^{-1})k}{c_1}\Big)^{1-\gamma}\Big)\\
		&+at_{k,J}^b \exp\Big(-d_J^\tau \Big(\frac{(1-\theta_m^{-1})k}{c_1}\Big)^{1-\gamma}\Big)\bigg](1+o(1)).
\end{align*}	}
Note that the function $g(t) = t^{-1} + t^\tau$ has a minimum at $d^* = \tau^{-\gamma}$. Now, we choose 
$d_1 < d^*$ such that $1/d_1 > g(d^*) = \tau^\gamma / (1-\gamma)$ and similarly $d_\infty$ such that
$d_\infty^\tau > g(d^*)$. Given any $\eps > 0$, we can now choose $J$ sufficiently large such that 
for all $d, d' \in [d_1,d^*]$ with $|d-d'| \leq (d_\infty-d_1)/J$, we have that
$|d^{\tau} - (d')^\tau| < \eps$.
Finally, we define $d_j = d_1 + \frac{j}{J}(d_\infty - d_1)$ for $j = 2,\ldots, J$. In particular, it follows that for any $j =1,\ldots, J-1$,
\[ d_{j+1}^{-1}+d_j^\tau > d_{j+1}^{-1} + d_{j+1}^\tau - \eps \geq g(d^*) - \eps = \tau^\gamma / (1-\gamma) - \eps. \]
Substituting this into the bound for $p(k)$ and using that $t_{k,j}^b=d_j^bc_1^{(1-\gamma)b} (1-\theta_m)^{\gamma b} k^{\gamma b}\leq C k^{\gamma b}$ uniformly in $j\in[J]$ for some constant $C>0$, we arrive at
\be \ba\label{eq:rvpkub}
p(k)&\leq (1-\theta_m^{-1})\theta_m^{-k}\exp\Big(-\Big(\frac{\tau^\gamma}{1-\gamma} - \eps\Big)\Big(\frac{(1-\theta_m^{-1})k}{c_1}\Big)^{1-\gamma}\Big)\Big(1+\sum_{j=1}^JaC k^{\gamma b}\Big)\\
&\leq \exp\Big(-\Big(\frac{\tau^\gamma}{1-\gamma}- 2\eps\Big)\Big(\frac{(1-\theta_m^{-1})k}{c_1}\Big)^{1-\gamma}\Big)\theta_m^{-k},
\ea \ee 
where the last inequality holds for $k$ large enough.

For a lower bound, we use the lower bound in~\eqref{eq:pkboundsbdd} with $t_k=c_1^{1-\gamma}((1-\theta_m^{-1})k/\tau)^\gamma$. We thus obtain
\be
p(k)\geq (1-\theta_m^{b\gamma-1})\tau^{-\gamma b} c_1^{b(1-\gamma)}k^{\gamma b}\exp\Big(-(\tau^{\gamma}+\tau^{\gamma-1})\Big(\frac{(1-\theta_m^{-1})k}{c_1}\Big)^{1-\gamma}(1+o(1))\Big)\theta_m^{-k}.
\ee 
As $\sqrt k=o(t_k)$ is not guaranteed for all values of $\tau>0$, we include the $1+o(1)$ in the exponent, so that
\be 
p(k)\geq \exp\Big(-\frac{\tau^\gamma}{1-\gamma}\Big(\frac{(1-\theta_m^{-1})k}{c_1}\Big)^{1-\gamma}(1+o(1))\Big)\theta_m^{-k},
\ee 
which together with~\eqref{eq:rvpkub} yields~\eqref{eq:pkbddgumbelrv} and concludes the proof.
\end{proof}

\begin{proof}[Proof of Theorem~\ref{thrm:pkasymp}, \ref{thrm:pkbdd}-\ref{ass:weighttau0} sub-case, Gumbel MDA]
We apply a similar approach as in the proof of~\eqref{eq:pkbddgumbelrv} to prove~\eqref{eq:pkbddgumbelrav}. Again, we choose  a sequence $t_{k,1} < \ldots < t_{k,J}$ with $J = J(k)$ to be determined later. 
We set 
$t_{k,j} = (d_{J-j+1})^{-1}  (1-\theta_m^{-1}) k (\log k)^{-(\tau-1)}$ for a sequence $d_1 < d_2 < \ldots < d_J$
to be fixed later on, but such that
$d_1$ is bounded in $k$ and $\log (d_J) = o(\log k)$. Then, by the same steps as in between~\eqref{eq:improvbound} and~\eqref{eq:rvpkub}, but now using  that $(1-W)^{-1}$ satisfies the~\ref{ass:weighttau0} sub-case, 
we obtain 
\be \ba		\label{eq:pkrav1-w1}
p(k) \leq{}& (1-\theta_m^{-1})\theta_m^{-k}\bigg[\exp\Big(-d_J (\log k)^{\tau -1} \Big)+a(\log t_{k,J})^b \exp(- (\log (t_{k,J}) / c_1)^{\tau} )\\
&+\sum_{j=1}^{J-1}a (\log t_{k,j})^b \exp\Big(- d_{J-j} (\log k)^{\tau -1} - (\log (t_{k,j}) / c_1)^{\tau}\Big) \bigg](1+o(1)).
\ea \ee 				
Now, we use that by a Taylor expansion
\be
(\log t_{k,j}/c_1)^\tau \geq \Big( \frac{\log k}{c_1} \Big)^{\tau} \Big( 1 - \tau(\tau-1)\frac{\log \log k}{\log k} \Big) 
- \frac{\tau}{c_1^\tau}\log \Big(\frac{d_{J-j+1}}{1-\theta_m^{-1}}\Big )  (\log k)^{\tau -1} .
\ee
Hence, we obtain that there exists a constant $C >0$ such that
\be \label{eq:2204-11} \ba p(k) 
\leq{} & C \theta_m^{-k} (\log k)^{b \vee 0} \bigg[\exp\Big(-d_J (\log k)^{\tau -1} \Big)\\
&+
\exp\Big(- \Big( \frac{\log k}{c_1} \Big)^{\tau} \Big( 1 - \tau(\tau-1)\frac{\log \log k}{\log k} \Big)  \Big)\\
&\quad \times  \bigg( 
\sum_{j=1}^{J-1} \exp\Big(- \Big( d_{J-j} - \frac{\tau}{c_1^\tau} \log \Big(\frac{d_{J-j+1}}{1-\theta_m^{-1}}\Big) \Big) (\log k)^{\tau -1} \Big) 
\\
&\quad+ \exp\Big(\frac{\tau}{c_1^\tau} \log \Big(\frac{d_1}{1-\theta_m^{-1}}\Big)  (\log k)^{\tau -1} \Big)\bigg) \bigg].
\ea \ee
We will eventually choose $d_J$ such that $d_J \geq c_1^{-\tau}\log k$, so that we can neglect the first term. 
Secondly, we notice that the function $f(x) = x - \tau c_1^{-\tau} \log(x/(1-\theta_m^{-1}))$ is minimised at $x^* = \tau c_1^{-\tau}$, 
so we choose $d_1$ small enough such that $\tau c_1^{-\tau} (\log (d_1)) < f(x^*)  = \tau c_1^{-\tau}\log ( \e c_1^\tau(1-\theta_m^{-1})/\tau)$. 
Therefore, we can neglect the first term and the term outside the sum and can concentrate on the sum itself and so need to estimate
\be \label{eq:2204-10} 
\sum_{j=1}^{J-1} \exp\Big(- \Big( d_{j} - \frac{\tau}{c_1^\tau} \log \Big(\frac{d_{j+1}}{1-\theta_m^{-1}}\Big) \Big) (\log k)^{\tau -1} \Big) .
\ee
Let $d_\infty$ be big enough such that $\tau c_1^{-\tau}\log( d/(1-\theta_m^{-1})) \leq d/2$ for all $d \geq d_\infty$
and also big enough such that $d_\infty \geq 2( f(x^*) + 1)$. 
Given $\eps > 0$, 
let $J'$ be such that $J'\geq \eps^{-1} (d_\infty - d_1)$ (note that $J'$ does not depend on $k$). 
Then define $d_j = d_1 +  (j/J') (d_\infty - d_1)$ for $j = 1, \ldots, J'$.
Moreover, choose $d_j = d_\infty + (j - J')$ for $j \geq J'+1$. Finally, choose $J$ such that $d_{J-1}\leq (\log k) / (c_1)^\tau \leq d_{J}$. We split the sum in~\eqref{eq:2204-10} into summands smaller and bigger than $J'$ and first consider
\[ \ba  \sum_{j=1}^{J'-1}{}& \exp\Big(- \Big( d_{j}  - \frac{\tau}{c_1^\tau} \log \Big(\frac{d_{j+1}}{1-\theta_m^{-1}}\Big) \Big) (\log k)^{\tau -1} \Big)\\
& \leq \sum_{j=1}^{J'-1} \exp\Big(- \Big( d_{j+1} - \frac{\tau}{c_1^\tau} \log\Big (\frac{d_{j+1}}{1-\theta_m^{-1}}\Big) - \eps \Big) (\log k)^{\tau -1} \Big) 
\\
& \leq J' \exp\Big(- (f(x^*)  - \eps \big) (\log k)^{\tau -1} \Big) . 
\ea \]
Now, for the second sum we obtain by the assumptions on $d_j$,
\[ \ba \sum_{j = J'}^{J-1} \exp\Big(- \Big( d_{j} & - \frac{\tau}{c_1^\tau} \log \Big(\frac{d_{j+1}}{1-\theta_m^{-1}}\Big) \Big) (\log k)^{\tau -1} \Big)\\
& \leq \sum_{j= J'}^{J-1} \exp\Big(- ( d_{j}  - d_{j+1}/2) (\log k)^{\tau -1} \Big)\\
& \leq \sum_{j= J'}^{J-1} \exp\Big(- \Big( \frac 12 (d_\infty + (j - J'))  - \frac 12 \Big) (\log k)^{\tau -1} \Big)\\
& \leq \exp\Big(- \frac{1}{2} (d_\infty -1)  (\log k)^{\tau -1} \Big) \sum_{j= 0}^{\infty} \exp\Big(  - j/2 (\log k)^{\tau -1} \Big)
\ea \]
By assumption, we have that $(d_\infty - 1)/2 \geq f(x^*)$, so that combining the two last estimates 
with~\eqref{eq:2204-11},
we obtain that there exists a constant $C_1 > 0$ such that 
\be\label{eq:pkrav1-w2}
p(k) \leq C_1 \theta_m^{-k} (\log k)^{b \vee 0} 
\exp\Big(- \Big( \frac{\log k}{c_1} \Big)^{\tau} \Big( 1 - \tau(\tau-1)\frac{\log\log k}{\log k} \Big) 
- (\log k)^{\tau -1} (f (x^*) - \eps) \Big) ,
\ee
which gives the required bound as we recall that $f(x^*) = \tau c_1^{-\tau}
\log ( \e c_1^\tau(1-\theta_m^{-1})/\tau)$.

For a lower bound, we set $ t_k=(1-\theta_m^{-1})k/(x^*(\log k)^{\tau-1})$, where $x^* = \tau c_1^{-\tau}$ as before. Then, we  use the lower bound in~\eqref{eq:pkboundsbdd} to find 
\be\ba \label{eq:ravlb}
p(k)&\geq (1-\theta_m^{-1})a\log( t_k )^b \exp(-(1-\theta_m^{-1})k/t_k -  (\log(t_k)/c_1)^\tau)\theta_m^{-k}(1+o(1))\\
&\geq C_2\log(k)^b\exp\Big(-\Big(\frac{\log k}{c_1}\Big)^\tau\Big(1-\tau(\tau-1)\frac{\log\log k }{\log k} \Big)\\
& \qquad\qquad		-  (  x^*  - \tau c_1^{-\tau} \log (x^*/(1-\theta_m^{-1})) )(\log k)^{\tau-1} 
(1+o(1))\Big) \theta_m^{-k},
\ea\ee 
for some constant $C_2>0$, which proves the lower bound in~\eqref{eq:pkbddgumbelrav} since we recall that
$f(x^*) = \tau c_1^{-\tau}\log (\e c_1^\tau(1-\theta_m^{-1})/\tau)$ and which concludes the proof.
\end{proof}

\begin{proof}[Proof of Theorem~\ref{thrm:pkasymp}, \ref{thrm:pkbdd} case, Atom at one]
We aim to prove~\eqref{eq:pkatomgap} and~\eqref{eq:pkatom} and assume without loss of generality that $x_0=1$. As $q_0=\P{W=1}>0$, we immediately obtain the lower bound
\be 
p(k)\geq q_0(1-\theta_m^{-1})\theta_m^{-k},
\ee 
for both cases. For an upper bound, let us first assume there exists an $s\in(0,1)$ such that $\P{W\in(s,1)}=0$. Then, the upper bound in~\eqref{eq:pkboundsbdd} yields
\be \ba
p(k) &\leq \exp( -(1-\theta_m^{-1}) ( 1- s) k) \theta_m^{-k} + (1-\theta_m^{-1}) \theta_m^{-k} \mu((s,1])\\
&=q_0(1-\theta_m^{-1})\theta_m^{-k}\big(1+\mathcal O\big(\exp(-(1-\theta_m^{-1})(1-s)k)\big)\big), 
\ea \ee 
which proves~\eqref{eq:pkatomgap}. To conclude the proof of~\eqref{eq:pkatom}, let $(s_k)_{k\in\N}$ be any positive sequence such that $s_k\uparrow 1$. Then, $\mu((s_k,1))$ vanishes as $k\to\infty$ by the continuity of probability measures. As a result, again using the upper bound in~\eqref{eq:pkboundsbdd},
\be \ba
p(k) &\leq \exp( -(1-\theta_m^{-1}) ( 1- s_k) k) \theta_m^{-k} + (1-\theta_m^{-1}) \theta_m^{-k} \mu((s_k,1])
\\
&= q_0(1-\theta_m^{-1}) \theta_m^{-k} \big(1+\mathcal O\big(\exp(-(1-\theta_m^{-1})(1-s_k)k)\vee \mu((s_k,1))\big)\big),
\ea \ee 
as required, which concludes the proof.
\end{proof}

\begin{proof}[Proof of Theorem~\ref{thrm:pkasymp}, \ref{thrm:pkgumb}-\ref{ass:weighttaufin} sub-case]		
We prove~\eqref{eq:pkgumbrv}, and use an improved upper bound compared to~\eqref{eq:pkboundsunbdd}, similar to the proof of the~\ref{ass:weightgumbel}-\ref{ass:weighttaufin} sub-case in the~\ref{thrm:pkbdd} setting. As in~\eqref{eq:improvbound}, writing $f(x)=(mx)^k(1+mx)^{-(k+1)}$ and taking sequences $s_{k,j},k,j\in\N$ such that $s_{k,j}\leq s_{k,j+1}$ and $s_{k,j}\leq k/m$ (so that $f(s_{k,j})$ is increasing in $j$) for all $j\in\N$ and $k$ large,
\be \ba \label{eq:rvfbound}
p(k)\leq{}& \int_0^{s_{k,1}}f(x)\mu(\d x)+\sum_{j=1}^{J-1}\int_{s_{k,j}}^{s_{k,j+1}}f(x)\mu(\d x)+\int_{s_{k,J}}^\infty f(x)\mu(\d x)\\
\leq{}&  f(s_{k,1})+\sum_{j=1}^{J-1}f(s_{k,j+1})\P{W\geq s_{k,j}}+\P{W\geq s_{k,J}}\\
\leq{}& \frac{1}{s_{k,1}}\exp\Big(-\frac{k}{ms_{k,1}}+\frac{k}{(ms_{k,1})^2}\Big)\\
&+\sum_{j=1}^{J-1} \frac{1}{s_{k,j+1}}\exp\Big(-\frac{k}{ms_{k,j+1}}+\frac{k}{(ms_{k,j+1})^2}\Big)a s_{k,j}^b \exp\Big(-(s_{k,j}/c_1)^\tau\Big)(1+o(1))\\
&+as_{k,J}^{b-1} \exp\Big(-(s_{k,J}/c_1)^\tau\Big)(1+o(1)),
\ea\ee 
where $J\geq 2$ is some large integer. We then set $s_{k,j}=d_jc_1^{1-\gamma}(k/m)^\gamma$ for some constants $d_1<d_2<\ldots<d_J$ (so that $s_{k,j}\leq s_{k,j+1}$ and $s_{k,j}\leq k/m$ holds for all $j\in\N$ and all $k$ large) and note that this bound is similar to the one developed in the proof of the upper bound in~\eqref{eq:pkbddgumbelrv}, but with $1-\theta_m^{-1}$ replaced by $1/m$, some additional lower order terms in the exponents and different constants. We can thus use the same approach to conclude that for any fixed $\eps>0$, we can take $J$ large enough such that we obtain the upper bound
\be\ba \label{eq:pkgumbub}
p(k)\leq{}& \exp\Big(-(1-\eps)\frac{\tau^\gamma}{1-\gamma}\Big(\frac{k}{c_1m}\Big)^{1-\gamma}\Big)\Big(\frac{m^\gamma}{d_1c_1^{1-\gamma}k^\gamma}\\
&+ac_1^{(b-1)(1-\gamma)}m^{-(b-1)\gamma}k^{(b-1)\gamma}\sum_{j=1}^{J-1}d_j^b d_{j+1}^{-1}+ad_J^{b-1} c_1^{(1-\gamma)(b-1)}m^{-\gamma (b-1)}k^{\gamma (b-1)}\Big)\\
\leq{}&\exp\Big(-(1-2\eps)\frac{\tau^\gamma}{1-\gamma}\Big(\frac{k}{c_1m}\Big)^{1-\gamma}\Big).
\ea \ee 
For a lower bound we set $s_k= c_1^{1-\gamma}(k/(\tau m))^\gamma$ and use the lower bound in~\eqref{eq:pkboundsunbdd} to obtain for any $\eps\in(0,\tau)$, 
\be\ba \label{eq:rvpkbound}
p(k)&\geq \Big(1-(c_1m)^{-(1-\gamma)}(k/\tau)^{-\gamma}\Big)^k\int_{s_k}^\infty \frac{1}{1+mx}\mu(\d x)\\
&\geq \exp( -  (c_1m)^{-(1-\gamma)}\tau^{\gamma} k^{1-\gamma} (1+o(1)) )\frac{1}{m+1}\int_{s_k}^{s_k^{1-\tau+\eps}}x^{-1}\,\mu(\d x),
\ea \ee
We use that for any $\eps\in(0,\tau)$ and all $k$ sufficiently large,
\be 
(s_k+s_k^{1-\tau+\eps})^b=s_k^b(1+o(1)),\qquad \Big(\frac{s_k+s_k^{1-\tau+\eps}}{c_1}\Big)^\tau \leq (s_k/c_1)^\tau+2\tau s_k^\eps/c_1^\tau, 
\ee 
so that   $\P{W\geq s_k+s_k^{1-\tau+\eps}}=o(\P{W\geq s_k})$. As a result we can bound the integral in~\eqref{eq:rvpkbound} from below by 
\be\ba 
\int_{s_k}^{s_k+s_k^{1-\tau+\eps}}\!\!\!\!\!\!\!\! x^{-1}\mu(\d x) & \geq \frac{1+o(1)}{s_k}\P{W\geq s_k}=(a+o(1)) s_k^{b-1}\e^{-(s_k/c_1)^\tau}\\ & =\e^{-\tau^{\gamma-1} (k/(c_1m))^{1-\gamma}(1+o(1))}.
\ea\ee
Using this in~\eqref{eq:rvpkbound}, we arrive at 
\be \ba 
p(k)&\geq
\frac{1}{m+1}\exp\big( -  (c_1m)^{-(1-\gamma)}\tau^{\gamma} k^{1-\gamma} (1+o(1))-\tau^{\gamma-1} (k/(c_1m))^{1-\gamma}(1+o(1))\big )\\
&=\exp\Big(-\frac{\tau^\gamma}{1-\gamma}(k/(c_1m))^{1-\gamma}(1+o(1))\Big).
\ea \ee 
Combined with~\eqref{eq:pkgumbub} this proves~\eqref{eq:pkgumbrv} and concludes the proof.
\end{proof}

\begin{proof}[Proof of Theorem~\ref{thrm:pkasymp}, \ref{thrm:pkgumb}-\ref{ass:weighttau0} sub-case]
For the~\ref{ass:weighttau0} sub-case, we use a similar approach as for the proof of the~\ref{ass:weighttaufin} sub-case in the~\ref{ass:weightbdd} setting. For $j = 1, \ldots, J$, we set $s_{k,j}=d_{J-j+1}^{-1}(k/m)(\log( k/m))^{-(\tau-1)}$ for a sequence $d_1<d_2<\ldots <d_J$ and $J$ to be determined later on, such that $d_1$ is bounded in $k$ and $\log d_J=o(\log k)$. Additionally, we use that the weights satisfy the~\ref{ass:weighttau0} sub-case and use~\eqref{eq:rvfbound} to obtain 
\be \ba 
p(k)\leq{}& \Big[\frac{1}{s_{k,1}}\exp\Big(-\frac{k}{ms_{k,1}}+\frac{k}{(ms_{k,1})^2}\Big)\\
&+\sum_{j=1}^{J-1} \frac{1}{s_{k,j+1}}\exp\Big(\!-\frac{k}{ms_{k,j+1}}+\frac{k}{(ms_{k,j+1})^2}\Big)a (\log s_{k,j})^b \exp\Big(\!-(\log (s_{k,j})/c_1)^\tau\Big)\\
&+as_{k,J}^{-1}\log s_{k,J}^b \exp\Big(-(\log(s_{k,J})/c_1)^\tau\Big)\Big](1+o(1))\\
={}&\Big[d_J\frac{m\log(k)^{\tau-1}}{k}\exp\Big(-d_J(\log(k/m))^{\tau-1}+o(1)\Big)\\
&+\sum_{j=1}^{J-1}ad_{J-j}\frac{m\log (k)^{b+\tau-1}}{k}\exp\Big(\!-d_{J-j}(\log(k/m))^{\tau-1}-(\log(s_{k,j})/c_1)^\tau+o(1)\Big)\\
&+ad_1\frac{m\log(k)^{b+\tau-1}}{k}\exp\Big(-(\log(s_{k,J}/c_1))^\tau\Big)\Big](1+o(1)).
\ea \ee 
We find that determining the optimal value of the $d_1,\ldots,d_J$ follows a similar approach as in the case when $(1-W)^{-1}$ satisfies the~\ref{ass:weighttau0} sub-case in~\eqref{eq:pkrav1-w1}-~\eqref{eq:pkrav1-w2} (but with $k$ replaced with $k/m$ in the exponent and $1-\theta_m^{-1}$ omitted). As a result, we obtain for any $\eps>0$,
\be \label{eq:ravub}
p(k)\leq k^{-1}\exp\Big(-\Big(\frac{\log (k/m)}{c_1}\Big)^\tau\Big(1-\tau(\tau-1)\frac{\log\log( k/m)}{\log (k/m)}+\frac{\tau\log(\e c_1^\tau/\tau)-\eps}{\log (k/m)}\Big)\Big).
\ee 
For a lower bound on $p(k)$ we set $ s_k=c_1^\tau \tau^{-1} (k/m)(\log(k/m))^{-(\tau-1)}$. As $s_k/\sqrt k$ diverges, it follows that we can improve the lower bound in~\eqref{eq:pkboundsunbdd} to find for some small constant $C>0$,
\be \ba \label{eq:improvedlb}
p(k)&\geq C\exp(-k/(ms_k)) \int_{s_k}^\infty \frac{1}{1+mx}\mu(\d x)\geq C_m\exp(-k/(ms_k))\int_{s_k}^{2s_k} x^{-1}\mu(\d x),
\ea \ee 
for some constant $C_m> 0$. Now, since $\tau>1$, when $k$ is large,
\be 
(\log(2s_k)/c_1)^\tau\leq (\log(s_k)/c_1)^\tau+2\tau c_1^{-\tau}\log 2(\log s_k)^{\tau-1}, 
\ee 
so that $\P{W\geq 2s_k}=o(\P{W\geq s_k})$. We can thus bound~\eqref{eq:improvedlb} from below by 
\be \ba 
C_m{}&\exp(-k/(ms_k))(2s_k)^{-1}\P{W\geq s_k}(1+o(1))\\ 
&\geq C_2s_k^{-1}(\log s_k)^b\exp(-(\log(s_k)/c_1)^\tau-k/(ms_k)),
\ea \ee  
for some constant $C_2>0$. Using the precise value of $s_k$ and a Taylor expansion of $(\log s_k)^\tau$ yields
\be
p(k)\geq C_2k^{-1}\exp\Big(-\Big(\frac{\log k}{c_1}\Big)^\tau\Big(1-\tau(\tau-1)\frac{\log\log k}{\log k}+\frac{\tau\log(\e c_1^\tau/\tau)}{\log k}(1+o(1))\Big)\Big).
\ee  
Combined with~\eqref{eq:ravub} this proves~\eqref{eq:pkgumbrav} and concludes the proof.
\end{proof}

\begin{proof}[Proof of Theorem~\ref{thrm:pkasymp}, \ref{thrm:pkfrechet} case]
We prove~\eqref{eq:pkfrechet}. First, let us set $s_k=k/(m(\alpha-1+\eps)\log k)$ for some $\eps>0$ (note $s_k\leq k/m$). Then, using the upper bound in~\eqref{eq:pkboundsunbdd} we bound $p(k)$ from above by
\be\ba
p(k)&\leq m(\alpha-1+\eps)\frac{\log k}{k} \big(k^{-(\alpha-1+\eps)}(1+o(1))+\ell(s_k)(m(\alpha-1+\eps)\log k)^{\alpha-1}k^{-(\alpha-1)}\big)\\
&=o(k^{-\alpha})+L(k)k^{-\alpha}, 
\ea \ee 
where $L(k):=(m(\alpha-1+\eps)\log k)^\alpha \ell(k/(m(\alpha-1+\eps)\log k))$ is slowly varying by~\cite[Proposition 1.5.7 (ii)]{BinGolTeu87}. The required upper bound is obtained by taking $\overline  \ell(k):=(1+\eps)L(k)$.

To conclude the proof, we construct a lower bound for $p(k)$. We set $s_k = k/m$	and use the improved lower bound for $p(k)$ as in the first line of~\eqref{eq:improvedlb} to obtain 
\be\ba
p(k)\geq \frac{C}{\e}\int_{k/m}^\infty\frac{1}{1+mx}\mu(\d x)&\geq \frac{Cm}{\e(1+2k)}(\P{W\geq k/m}-\P{W\geq 2k/m})\\
&=\frac{Cm}{3\e }k^{-1}\ell(k/m)(k/m)^{-(\alpha-1)}\Big(1-\frac{\ell(2k/m)}{\ell(k/m)}2^{-(\alpha-1)}\Big). 
\ea \ee
As $\ell$ is slowly-varying at infinity, is follows that the last term can be bounded from below by a constant, as the fraction converges to one, and that $\ell(k/m)\geq \ell(k)/2$, when $k$ is large. As a result,
\be 
p(k)\geq C_2\ell(k)k^{-\alpha}=:\underline \ell(k)k^{-\alpha},
\ee 
where $C_2>0$ is a suitable constant.	
\end{proof}

\section{The maximum conditional mean degree  in WRGs}\label{sec:meandeg}

It turns out that the analysis of the maximum degree of WRGs can be carried out via the maximum of the conditional mean degrees under certain assumptions on the vertex-weight distribution. To this end, we formulate several propositions to describe the behaviour of the maximum conditional mean degree when the vertex-weights satisfy the different conditions in Assumption~\ref{ass:weights}. Let us first introduce an important quantity, namely the location of the maximum conditional mean degree,
\be 
\wt I_n:=\inf\{\inn: \Ef{}{\zni}\geq \Ef{}{\Zm_n(j)}\text{ for all }j\in[n]\}.
\ee 
Furthermore, it is important to note that, as $\zni$ is a sum of indicator random variables for any $\inn$, its conditional mean equals
\be 
\Ef{}{\zni}=m\F_i\sum_{j=i}^{n-1}\frac{1}{S_j}, 
\ee 
where we recall that $S_j = \sum_{\ell =1}^j \F_\ell$.
This is also true when we work with the model with a \emph{random out-degree}, as discussed in Remark~\ref{remark:def}$(ii)$, so that all the results in the upcoming propositions also hold for this model by setting $m=1$. 

Another important result that we use throughout the proofs of the propositions is the following lemma. We note that the conditions in the lemma are satisfied for all cases in Assumption~\ref{ass:weights} such that $\E{W}<\infty$. A similar result (under a different condition) can be found in~\cite[Theorem 1]{AthKar67}.

\begin{lemma} Let $W_1,W_2, \ldots $ be i.i.d.\ non-negative random variables such that $W_i > 0$ a.s.\ and $\E{W^{1+\eps}} < \infty$ for some $\eps > 0$. Moreover, we assume that $\E{W_i} = 1$ and write  $S_n = \sum_{i=1}^n W_i$. Then,
there exists  an almost surely finite random variable $Y$ such that	 
\be\label{eq:sumfitnessconv}
\sum_{j=1}^{n-1}\frac{1}{S_j}-\log n \toas Y.
\ee  
\end{lemma}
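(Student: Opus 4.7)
The plan is to decompose the partial sum into the harmonic series plus a correction term and show that the correction converges absolutely almost surely under the moment assumption. Write
\be
\sum_{j=1}^{n-1}\frac{1}{S_j}-\log n=\sum_{j=1}^{n-1}\Big(\frac{1}{S_j}-\frac{1}{j}\Big)+\Big(\sum_{j=1}^{n-1}\frac{1}{j}-\log n\Big).
\ee
The second bracket converges deterministically to the Euler--Mascheroni constant, so it suffices to prove that $\sum_{j\geq 1}(1/S_j-1/j)$ is almost surely absolutely convergent, and the limit $Y$ will then be this sum shifted by $\gamma_E$.

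For the correction, I would use the identity
\be
\frac{1}{S_j}-\frac{1}{j}=\frac{j-S_j}{jS_j},
\ee
and split off the factor $j/S_j$, which by the classical strong law of large numbers tends to $1$ a.s. (using $\E W=1$ and $W>0$ a.s., so $S_j>0$ for all $j$). The key quantitative input comes from the Marcinkiewicz--Zygmund strong law of large numbers: since $\E{W^{1+\eps}}<\infty$, applied with $p=1+\eps\in(1,2)$, it yields
\be
\frac{S_j-j}{j^{1/(1+\eps)}}\toas 0,
\ee
so that $|S_j-j|=o(j^{1/(1+\eps)})$ almost surely. Combining these two facts,
\be
\Big|\frac{1}{S_j}-\frac{1}{j}\Big|=\frac{j}{S_j}\cdot\frac{|j-S_j|}{j^2}=o\big(j^{1/(1+\eps)-2}\big)\qquad\text{a.s.}
\ee
Since $1/(1+\eps)-2<-1$, the series $\sum_{j\geq 1}(1/S_j-1/j)$ converges absolutely almost surely, which proves the claim with
\[ Y=\gamma_E+\sum_{j=1}^{\infty}\Big(\frac{1}{S_j}-\frac{1}{j}\Big). \]

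The main obstacle is obtaining sufficient decay of $|S_j-j|$: a naive first-moment argument (ordinary SLLN) only yields $|S_j-j|=o(j)$, which makes $|1/S_j-1/j|=o(1/j)$ and is not summable. The $(1+\eps)$-th moment assumption is precisely what allows the Marcinkiewicz--Zygmund rate $o(j^{1/(1+\eps)})$, which tips $1/(1+\eps)-2$ below $-1$ and produces absolute convergence. One should also handle the small values of $j$ separately (where $S_j$ could a priori be small): since $S_j\geq W_1>0$ almost surely, and since the asymptotic bound only needs to hold eventually in $j$, the contribution of any finite initial segment is a.s.\ finite and harmless.
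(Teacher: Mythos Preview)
Your proof is correct and follows essentially the same strategy as the paper: both decompose into the harmonic remainder plus $\sum_j (j-S_j)/(jS_j)$, use the SLLN to control $j/S_j$, and invoke Marcinkiewicz--Zygmund theory under the $(1+\eps)$-moment assumption to obtain summability. The only minor difference is that you apply the Marcinkiewicz--Zygmund \emph{strong law} to get the almost-sure rate $|S_j-j|=o(j^{1/(1+\eps)})$ directly, whereas the paper uses the Marcinkiewicz--Zygmund \emph{moment inequality} $\E{|j-S_j|^{1+\eps}}^{1/(1+\eps)}\leq c_\eps j^{1/(1+\eps)}$ to show the tail series has finite expectation; both routes yield absolute convergence and are essentially equivalent here.
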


\begin{proof} We first write
\be\label{eq:split15}
\sum_{j=1}^{n-1}\frac{1}{S_j}-\log n = \sum_{j=1}^{n-1}\frac{j-S_j}{jS_j}+\sum_{j=1}^{n-1}\frac{1}{j}-\log n=:\sum_{j=1}^{n-1}\frac{j-S_j}{jS_j}+E_n,
\ee 
where $E_n$ is deterministic and converges to the Euler-Mascheroni constant. 
Therefore, it suffices to show that the first sum on the right-hand side is almost surely absolutely convergent.	

By the strong law of large numbers and since $\E{W_i} = 1$, there exists a (random) $J$ such that $S_j>\frac 12j$ for all $j\geq J$ almost surely. So, we can bound almost surely,
\be 
\sum_{j=1}^{n-1}\frac{|j-S_j|}{jS_j}\leq \sum_{j=1}^{J-1}\frac{|j-S_j|}{jS_j}+ 2 \sum_{j=J}^{n-1}\frac{|j-S_j|}{j^2}.
\ee
The first term is finite almost surely since each $W_i > 0$ a.s. We now claim that the second term has a finite mean. Namely,
\be 
\E{\sum_{j=J}^{n-1}\frac{|j-S_j|}{j^2}}\leq \sum_{j=1}^{\infty} \frac{1}{j^2}\E{|j-S_j|^{1+\eps}}^{1/(1+\eps)}\leq \sum_{j=1}^\infty \frac{c_\eps}{j^2}j^{1/(1+\eps)},
\ee 
which is summable, where $c_\eps>0$ is a constant and where we use a Zygmund-Marcinkiewicz bound, see~\cite[Corollary 8.2]{Gut13} in the last step. Therefore, the sum on the right-hand side in~\eqref{eq:split15} is 
almost surely (absolutely) convergent, which completes the proof.
\end{proof}

{\cb In the upcoming subsections, we state and prove several propositions related to the maximum conditional mean degree in WRGs, based on the different conditions in Assumption~\ref{ass:weights}. We note that it suffices to state the proofs of the results below for $m=1$ only, as the expected degrees scale linearly with $m$.}

\subsection{Maximum conditional mean degree, \ref{ass:weightgumbel}-\ref{ass:weighttauinf} sub-case}

\begin{proposition}[\cb Max expected degree, \ref{ass:weightgumbel}-\ref{ass:weighttauinf}]\label{prop:condmeantauinf} 
Consider the WRG model as in Definition~\ref{def:wrg} and suppose the vertex-weights satisfy the~\ref{ass:weightgumbel}-\ref{ass:weighttauinf} sub-case in Assumption~\ref{ass:weights}. Then,
\be \label{eq:tauinfty}
\Big(\max_{\inn}\frac{\Ef{}{\zni}}{mb_n\log n},\frac{\log \wt I_n}{\log n}\Big)\toinp (1,0).
\ee 
\end{proposition}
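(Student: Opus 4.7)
The identity $\Ef{}{\zni} = m \F_i \sum_{j=i}^{n-1} 1/S_j$ combined with the partial sum lemma above (which gives $\sum_{j=1}^{n-1} 1/S_j = \log n + Y_n$ with $Y_n \convas Y$ almost surely finite) reduces everything to understanding $\max_i \F_i$ and on which scale we can find a weight comparable to $b_n$ with enough ``room'' $\log(n/i)$ to the right. The plan is to prove matching upper and lower bounds for the maximum conditional mean, then use the same bounds with a small shift to localise $\wt I_n$.

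\textbf{Upper bound.} Using the trivial bound $\sum_{j=i}^{n-1}1/S_j\le \sum_{j=1}^{n-1}1/S_j$,
$$\max_{i\in[n]}\Ef{}{\zni}\le m\,\Bigl(\max_{i\in[n]}\F_i\Bigr)\bigl(\log n+Y_n\bigr).$$
Since we are in the \ref{ass:weightgumbel}--\ref{ass:weighttauinf} sub-case, the Gumbel MDA together with the standard property $a_n/b_n\to 0$ for unbounded support (the Gumbel auxiliary function $f$ satisfies $f(u)/u\to0$, which is exactly the content of Lemma~1.2 in~\cite{Res13} used earlier in Section~\ref{sec:degree}) yields $\max_{i\in[n]}\F_i/b_n\toinp 1$. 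Dividing through gives $\max_i\Ef{}{\zni}/(m b_n\log n)\le 1+\op(1)$.

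\textbf{Lower bound.} Fix $\beta\in(0,1)$ and let $B_n:=\{\lceil n^\beta\rceil,\ldots,\lceil 2n^\beta\rceil\}$. Let $i^*=i^*(n,\beta)$ be the index achieving $\max_{i\in B_n}\F_i$. By the Gumbel MDA applied to the i.i.d.\ subfamily and the representation $b_n\sim \ell(\log n)$ with $\ell$ slowly varying, $b_{n^\beta}/b_n=\ell(\beta\log n+O(1))/\ell(\log n)\to 1$, while $\max_{i\in B_n}\F_i/b_{|B_n|}\toinp 1$; hence $\F_{i^*}/b_n\toinp 1$. Writing
$$\sum_{j=i^*}^{n-1}\frac{1}{S_j}=\log(n/i^*)+(Y_n-Y_{i^*})\ge (1-\beta)\log n-\log 2+(Y_n-Y_{i^*}),$$
and noting that $i^*\to\infty$ so $Y_n-Y_{i^*}\convas 0$, we get
$$\max_{i\in[n]}\Ef{}{\zni}\ge \Ef{}{\Zm_n(i^*)}\ge m b_n(1-\beta)\log n\,(1+\op(1)).$$
Taking $\beta$ along a countable sequence $\beta_k\downarrow 0$ gives $\max_i\Ef{}{\zni}/(m b_n\log n)\toinp 1$.

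\textbf{Location and main obstacle.} For the second coordinate, fix $\alpha>0$. For every $i>n^\alpha$,
$$\Ef{}{\zni}\le m\,\Bigl(\max_{i\in[n]}\F_i\Bigr)\Bigl((1-\alpha)\log n+Y_n-Y_{\lceil n^\alpha\rceil}\Bigr)\le m b_n(1-\alpha)\log n\,(1+\op(1)),$$
by the same ingredients as in the upper bound together with $Y_{\lceil n^\alpha\rceil}\convas Y$. Applying the lower bound of the previous paragraph with $\beta=\alpha/3$ shows that, with probability tending to one, $\max_{i\in[n]}\Ef{}{\zni}>\max_{i>n^\alpha}\Ef{}{\zni}$, forcing $\wt I_n\le n^\alpha$. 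Since $\alpha>0$ is arbitrary, $\log\wt I_n/\log n\toinp 0$. The only delicate point is that the control $\sum_{j=i}^{n-1}1/S_j=\log(n/i)+\op(1)$ requires $i\to\infty$; this is automatic in both the lower bound ($i^*\ge n^\beta\to\infty$) and the location argument ($i\ge n^\alpha\to\infty$), so no uniform control over very small $i$ is needed. Verifying $a_n=o(b_n)$ in the SV sub-case (used in the upper bound) is the one place where the assumption on the tail of $\F$ enters beyond the MDA structure; it is immediate from the representation of Resnick cited above.
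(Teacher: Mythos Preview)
Your proof is correct and follows essentially the same approach as the paper's. Both arguments hinge on the decomposition $\Ef{}{\zni}=m\F_i\sum_{j=i}^{n-1}1/S_j$, the partial sum lemma, the fact that $\max_{i\in[n]}\F_i/b_n\toinp 1$ (via $a_n/b_n\to0$ in the Gumbel MDA), and the slow-variation relation $b_{n^\beta}/b_n\to1$. The only cosmetic differences are that you take the lower-bound witness from the window $[n^\beta,2n^\beta]$ and let $\beta\downarrow0$, whereas the paper uses $[1,n^{1-\beta}]$ and lets $\beta\uparrow1$ (the same argument after relabelling), and you explicitly justify $a_n/b_n\to0$ via Resnick's Lemma~1.2 while the paper uses this implicitly.
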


\begin{proof}
Let $\beta\in(0,1)$. It follows that 
\be 
\max_{\inn}\frac{\F_i\sum_{j=i}^{n-1}1/S_j}{b_n \log n}\geq \max_{i\in[n^{1-\beta}]}\frac{\F_i\sum_{j=\lceil n^{1-\beta}\rceil}^{n-1}1/S_j}{b_n \log n}=\max_{i\in[n^{1-\beta}]}\frac{\F_i}{b_{n^{1-\beta}}}\frac{\sum_{j=\lceil n^{1-\beta}\rceil}^{n-1}1/S_j}{\log n}\frac{b_{n^{1-\beta}}}{b_n}.
\ee 
We then note that $b_{n^{1-\beta}}/b_n = \ell ((1-\beta)\log n)/\ell (\log n)\to 1$ as $n$ tends to infinity, since $\ell$ is slowly varying at infinity. Furthermore, the maximum on the right-hand side tends to $1$ in probability and the fraction in the middle converges to $\beta$ almost surely by~\eqref{eq:sumfitnessconv}. Thus, with high probability,
\be \label{eq:liminftauinfty}
\max_{\inn}\frac{\F_i\sum_{j=i}^{n-1}1/S_j}{b_n \log n}\geq \beta,
\ee 
where we note that we can choose $\beta$ arbitrarily close to $1$. Similarly, we obtain an upper bound by setting the range of the sum from $1$ to $n-1$. Since $\max_{\inn}W_i/b_n$ converges to one in probability, combining this lower bound with~\eqref{eq:sumfitnessconv} yields that, wit high probability
\be 
\max_{\inn}\frac{\F_i\sum_{j=i}^{n-1}1/S_j}{b_n \log n}\leq 1+\eta,
\ee 
for any $\eta>0$. Together with~\eqref{eq:liminftauinfty} this yields the first part of~\eqref{eq:tauinfty}. Now, for the second part, let $\eps>0$, and let us write, for $\eta<\eps$, 
\be 
E_n:=\bigg\{\max_{\inn}\frac{\F_i\sum_{j=i}^{n-1}1/S_j}{b_n \log n}\geq 1-\eta\bigg\},
\ee 
which holds with high probability by the above. Then,
\be \ba
\P{\frac{\log \wt I_n}{\log n}>\eps}& =\P{\Big\{\frac{\log \wt I_n}{\log n}>\eps\Big\}\cap E_n}+\P{E_n^c}\\ & \leq \P{\max_{i>n^\eps}\frac{\F_i\sum_{j=i}^{n-1}1/S_j}{b_n \log n}\geq 1-\eta}+\P{E_n^c}.
\ea \ee 
Clearly, the second probability on the right-hand side tends to zero with $n$. What remains to show is that the same holds for the first probability. Via a simple upper bound, where we substitute $j=\lfloor n^\eps\rfloor$ for $j=i$ in the summation, we immediately obtain
\be 
\P{\max_{i>n^\eps}\frac{\F_i\sum_{j=\lfloor n^\eps\rfloor}^{n-1}1/S_j}{b_n \log n}\geq 1-\eta}\to 0,
\ee 
as the maximum over the fitness values scaled by $b_n$ tends to one in probability, and the sum scaled by $\log n$ converges to $1-\eps$ almost surely, so that the product of the two converges to $1-\eps<1-\eta$ in probability, and so the result follows.
\end{proof}

\subsection{Maximum conditional mean degree, \ref{ass:weightgumbel}-\ref{ass:weighttaufin} sub-case}\label{sec:gumbrv}

Before we turn our attention to the maximum conditional mean in-degree in the WRG model for the~\ref{ass:weightgumbel}-\ref{ass:weighttaufin} sub-case, we first inspect the behaviour of maxima of i.i.d.\ vertex-weights in this class in the following lemma.

\begin{lemma}[Almost sure convergence of rescaled maximum vertex-weight]\label{lemma:asconvmax}
Let $(\F_i)_{i\in\N}$ be i.i.d. random variables that satisfy the~\ref{ass:weightgumbel}-\ref{ass:weighttaufin} sub-case in Assumption~\ref{ass:weights}. Then, 
\be 
\max_{\inn}\frac{\F_i}{b_n}\toas 1.
\ee 
\end{lemma}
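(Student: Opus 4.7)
The plan is to use Borel--Cantelli in both directions, which is clean here since $M_n := \max_{i\in[n]} W_i$ is non-decreasing, so controlling $\P{M_n \notin [(1-\eps)b_n, (1+\eps)b_n]}$ along the full sequence of indices suffices. The whole argument reduces to computing $(\lambda b_n / c_1)^\tau$ for $\lambda = 1 \pm \eps$ carefully enough to extract the correct polynomial-in-$n$ rate.

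Using the explicit expressions $a_n = c_2 (\log n)^{1/\tau - 1}$ with $c_2 = c_1/\tau$ and $b_n = c_1(\log n)^{1/\tau} + a_n K$ where $K = (b/\tau)\log\log n + b\log c_1 + \log a$, a first-order Taylor expansion of $x \mapsto x^\tau$ at $x = (\log n)^{1/\tau}$ gives
\[
(b_n/c_1)^\tau = \log n + K + o(1) = \log n + (b/\tau)\log\log n + \log(ac_1^b) + o(1) .
\]
Combined with the tail assumption $\P{W \geq x} \sim a x^b \e^{-(x/c_1)^\tau}$, this yields, for any fixed $\lambda > 0$,
\[
\P{W \geq \lambda b_n} = C_\lambda \, n^{-\lambda^\tau} (\log n)^{(b/\tau)(1-\lambda^\tau)} (1+o(1)),
\]
for a constant $C_\lambda > 0$ depending on $\lambda, \tau, a, b, c_1$. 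In particular, $n \P{W \geq b_n} \to 1$, consistent with the choice of $b_n$.

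For the upper bound, the union bound gives $\P{M_n \geq (1+\eps)b_n} \leq n \P{W \geq (1+\eps)b_n}$, which by the estimate above is of order $n^{1-(1+\eps)^\tau}$ up to a slowly varying factor. Since $(1+\eps)^\tau > 1$ for any $\eps > 0$ and $\tau > 0$, this bound is summable in $n$, so by Borel--Cantelli the event $\{M_n \geq (1+\eps)b_n\}$ occurs only finitely often almost surely, giving $\limsup_n M_n/b_n \leq 1+\eps$ a.s.\ and hence $\limsup_n M_n/b_n \leq 1$ a.s.

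For the lower bound, we use independence to write
\[
\P{M_n \leq (1-\eps)b_n} = \big(1 - \P{W \geq (1-\eps)b_n}\big)^n \leq \exp\!\big(-n \P{W \geq (1-\eps)b_n}\big).
\]
By the tail estimate, $n\P{W \geq (1-\eps)b_n}$ is of order $n^{1-(1-\eps)^\tau}$, and since $(1-\eps)^\tau < 1$ the exponent $\delta := 1 - (1-\eps)^\tau$ is strictly positive. Thus $\P{M_n \leq (1-\eps)b_n} \leq \exp(-c n^\delta)$ for some $c > 0$ and all large $n$, which is summable. Borel--Cantelli gives $\liminf_n M_n/b_n \geq 1-\eps$ a.s., and letting $\eps \downarrow 0$ along a rational sequence completes the proof.

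The only nontrivial step is the Taylor expansion of $(b_n/c_1)^\tau$; everything else is a standard Borel--Cantelli argument. No obstacle is expected beyond bookkeeping of the logarithmic correction terms in $b_n$, which cancel exactly against the polynomial correction $x^b$ in the tail to reproduce the normalization $n\P{W\geq b_n} \to 1$.
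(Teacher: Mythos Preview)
Your proof is correct and takes a genuinely different route from the paper. The paper first invokes an external result (\cite{HofMorSid08}, Lemma~4.1) to handle the special case $a=c_1=1$, $b=0$, and then transfers almost sure convergence to the general case via a transformation argument: writing $W\overset{d}{=}h^{\leftarrow}(X)$ for a suitable monotone $h$, analysing $h^{\leftarrow}$ through regular variation theory, and using continuous mapping. Your approach is fully self-contained and more elementary: you compute $\P{W\geq \lambda b_n}$ directly from the tail assumption and the explicit form of $b_n$, obtaining the precise polynomial rate $n^{-\lambda^\tau}$ (times a slowly varying factor), and then apply Borel--Cantelli on both sides. What the paper's route buys is a modular reduction to an already-known case and a transferable technique for pushing almost sure results through monotone transformations; what yours buys is a shorter, standalone argument with no external dependencies and a transparent identification of the summability exponent $1-\lambda^\tau$. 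One minor remark: your opening comment about monotonicity of $M_n$ is not actually used---the Borel--Cantelli bounds you prove are already along the full sequence and require no subsequence or interpolation step.
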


\begin{proof}
A particular case, when $c_1=a=1,b=0,\tau\in(0,1]$ and the asymptotic equivalence is replaced with an equality, follows directly from~\cite{HofMorSid08}[Lemma $4.1$] when we set $d=1$ in this lemma. The lemma provides an almost sure lower and upper bound for the maximum of $n$ i.i.d.\ random variables with a distribution as described. The leading order term in these bounds is asymptotically equal to $b_n$, from which the statement of the lemma follows.

We observe that Lemma $4.1$ in~\cite{HofMorSid08} can be extended to hold for any $\tau>1$ as well, in which case only lower order terms may need to be adjusted slightly, so that the leading order terms are still asymptotically equivalent to $b_n$. Thus, it remains to show that for any $\tau>0$, we can extend the case $c_1=a=1,b=0$ to any $c_1,a>0,b\in\R$. 

To that end, let $(\F_i)_{i\in\N}$ be i.i.d.\ copies of a random variable $\F$ with a tail distribution as in the~\ref{ass:weightgumbel}-\ref{ass:weighttaufin} sub-case. This implies that there exists a function $\ell$ such that $\ell(x)\to 1$ as $x\to \infty$, and 
\be 
\P{W\geq x}=\ell(x)ax^b \e^{-(x/c_1)^\tau}.
\ee
Then, let $(X_i)_{i\in\N}$ be i.i.d.\ copies of a random variable $X$ with a tail distribution as in the~\ref{ass:weightgumbel}-\ref{ass:weighttaufin} sub-case, with $\ell\equiv  1, a=c_1=1,b=0$, which are also independent of the $\F_i$. As follows from the above, 
\be \label{eq:asmaxconv}
\max_{\inn}\frac{X_i}{b_n}\toas 1.
\ee
Let us write $b_n(X),b_n(\F)$ to distinguish between the respective first-order growth-rate sequences of $X$ and $\F$, respectively. Define the functions $f,h:\R\to \R$ as $f(x):=x(c_1^{-\tau}-(b\log x+\log a)/x^\tau)^{1/\tau}$ and $h(x):=(f(x)^\tau-\log(\ell(x)))^{1/\tau}$. Then, we can write
\be 
\P{\F\geq x}=\ell(x)ax^b\exp\big(-\big(\tfrac{x}{c_1}\big)^{\tau}\big)=\ell(x)\exp(-f(x)^\tau)=\exp(-h(x)^\tau)=\P{X\geq h(x)}.
\ee 
Hence, $W\overset d = h^{\leftarrow}(X)$, where $h^{\leftarrow}$ is the generalised inverse of $h$, defined as $h^\leftarrow(x):=\inf\{y\in \R: h(y)\geq x\}, x\in\R$. We can write $h$ as 
\be \ba 
h(x)&=f(x)\Big(1-\frac{\log(\ell(x))}{f(x)^\tau}\Big)^{1/\tau}\!\!\!\!\\ & =x\Big(c_1^{-\tau}-\frac{b\log x+\log a}{x^\tau}\Big)^{1/\tau}\!\!\Big(1-\frac{\log(\ell(x))}{(x/c_1)^\tau-(b\log x+\log a)}\Big)^{1/\tau}\\
&=:xL(x).
\ea \ee 
Note that $L(x)\to 1/c_1$ as $x$ tends to infinity, so that $h$ is regularly varying at infinity with exponent $1$. \cite[Theorem $1.5.12$]{BinGolTeu87} then provides a slowly-varying function $\wt L$ such that 
\be \label{eq:asympinv}
\lim_{x\to\infty}\wt L(x)L(x\wt L(x))=1,
\ee 
which implies that $h^\leftarrow(x)\sim \wt L(x)x$ and that $\wt L(x)$ converges to $c_1$. Since $h^\leftarrow $ is increasing, we obtain 
\be 
\max_{\inn}\frac{\F_i}{b_n(\F)}=\frac{h^\leftarrow(\max_{\inn}X_i)}{\wt L(\max_{\inn}X_i)\max_{\inn}X_i}\frac{\max_{\inn}X_i}{b_n(X)}\frac{b_n(X)}{b_n(\F)}\wt L(\max_{\inn}X_i)\toas 1,
\ee 
since the maximum over $X_i$ tends to infinity with $n$ almost surely, $b_n(X)/b_n(\F)\sim 1/c_1$, by~\eqref{eq:asmaxconv} and~\eqref{eq:asympinv} and the continuous mapping theorem.
\end{proof}

With this lemma at hand, we now investigate the maximum conditional mean in-degree of the WRG when the vertex-weights satisfy the~\ref{ass:weightgumbel}-\ref{ass:weighttaufin} sub-case. {\cb We state and prove three propositions which consider the maximum expected degree's first order, the second order in a small window around the expected index that attains the maximum expected degree, and the entire second order, respectively.  

\begin{proposition}[Max expected degree, first order, \ref{ass:weightgumbel}-\ref{ass:weighttaufin}]\label{prop:condmeantaufin}
	Consider the WRG model as in Definition~\ref{def:wrg} and suppose the vertex-weights satisfy the~\ref{ass:weightgumbel}-\ref{ass:weighttaufin} sub-case in Assumption~\ref{ass:weights}. Let $\gamma:=1/(\tau+1)$. Then,
	\be\label{eq:taufin1st}
	\Big(\max_{\inn}\frac{\Ef{}{\zni}}{m(1-\gamma)b_{n^\gamma}\log n},\frac{\log \wt I_n}{\log n}\Big)\toas (1,\gamma).
	\ee 
\end{proposition} 
}

\begin{proof}
We start by proving the first-order growth rate of the maximum. We can immediately construct the lower bound
\be \label{eq:1stordlow}
\frac{\max_{\inn}\F_i\sum_{j=i}^{n-1}1/S_j}{(1-\gamma)b_{n^\gamma}\log n}\geq \frac{\max_{i\in[n^\gamma]}\F_i\sum_{j=n^\gamma}^{n-1}1/S_j}{(1-\gamma)b_{n^\gamma}\log n},
\ee 
and the right-hand side converges almost surely to $1$ by~\eqref{eq:sumfitnessconv} and Lemma~\ref{lemma:asconvmax}. For an upper bound, we first define the sequence $(\wt \eps_k)_{k\in\Z_+}$ as 
\be \label{eq:wtepsk}
\wt \eps_k=\frac{\gamma}{2}\Big(1-\Big(\frac{1-\gamma}{1-(\gamma-\wt \eps_{k-1})}\Big)^\tau \Big)+\frac{1}{2}\wt\eps_{k-1},\quad k\geq 1,\qquad \wt \eps_0=\gamma.
\ee 
This sequence is defined in such a way that it is decreasing and tends to zero with $k$, and that the maximum over indices $i$ such that $n^{\gamma-\wt \eps_{k-1}}\leq i\leq n^{\gamma-\wt \eps_k}$ is almost surely bounded away from $1$: For any $k\geq 1$, we obtain the upper bound
\be \ba \label{eq:epskub}
\max_{i\in [n^{\gamma-\wt\eps_k}]}\frac{\F_i\sum_{j=i}^{n-1}1/S_j}{(1-\gamma)b_{n^\gamma}\log n}&=\max_{1\leq j\leq k}\max_{n^{\gamma-\wt \eps_{j-1}}\leq i\leq n^{\gamma-\wt\eps_j}}\frac{\F_i\sum_{j=i}^{n-1}1/S_j}{(1-\gamma)b_{n^\gamma}\log n}\\
&\leq \max_{1\leq j\leq k}\max_{ i\in[ n^{\gamma-\wt\eps_j}]}\frac{\F_i}{b_{n^{\gamma-\wt\eps_j}}}\frac{\sum_{j=n^{\gamma-\wt\eps_{j-1}}}^{n-1}1/S_j}{(1-\gamma)\log n}\frac{b_{n^{\gamma-\wt\eps_j}}}{b_{n^\gamma}},
\ea\ee 
which, using the asymptotics of $b_n$,~\eqref{eq:sumfitnessconv} and Lemma~\ref{lemma:asconvmax} converges almost surely to 
\be \label{eq:eps_limit}
c_k:=\max_{1\leq j\leq k}\frac{1-(\gamma-\wt\eps_{j-1})}{1-\gamma}\Big(\frac{\gamma-\wt\eps_j}{\gamma}\Big)^{1/\tau},
\ee
which is strictly smaller than one by the choice of the sequence $(\wt \eps_k)_{k\geq 0}$. Now, by writing, for some $\eta>0$ to be specified later,
\be 
E_n:=\bigg\{\max_{\inn}\frac{\F_i\sum_{j=i}^{n-1}1/S_j}{(1-\gamma)b_{n^\gamma}\log n}\geq 1-\eta\bigg\},
\ee 
which holds almost surely for all $n$ large by~\eqref{eq:1stordlow}, we obtain, for any $\eps>0$,
\be \ba 
\Big\{\frac{\log \wt I_n}{\log n}<\gamma-\eps\Big\}&\subseteq\Big\{\Big\{\frac{\log \wt I_n}{\log n}<\gamma-\eps\Big\}\cap E_n\Big\}\cup E_n^c\\ & \subseteq \Big\{\max_{i<n^{\gamma-\eps}}\frac{\F_i\sum_{j=i}^{n-1}1/S_j}{(1-\gamma)b_{n^\gamma}\log n}\geq 1-\eta\Big\}\cup E_n^c.
\ea \ee 
The second event in the union on the right-hand side holds for finitely many $n$ only, almost surely. For the first event in the union, we use~\eqref{eq:epskub} for a fixed $k$ large enough such that $\wt\eps_k<\eps$ to obtain
\be \label{eq:maxsubset} 
\Big\{\max_{i<n^{\gamma-\eps}}\frac{\F_i\sum_{j=i}^{n-1}1/S_j}{(1-\gamma)b_{n^\gamma}\log n}\geq 1-\eta\Big\}  \subseteq \Big\{\max_{i<n^{\gamma-\wt \eps_k}}\frac{\F_i\sum_{j=i}^{n-1}1/S_j}{(1-\gamma)b_{n^\gamma}\log n}\geq 1-\eta\Big\}
\ee
If we then choose $\eta$ small enough such that 
\be \ba
c_k&=\max_{1\leq j\leq k}2^{-1/\tau}\Big(\frac{(1-(\gamma-\wt\eps_{j-1}))^\tau(\gamma-\wt\eps_{j-1})}{(1-\gamma)^\tau \gamma}+1\Big)^{1/\tau}\\
&=2^{-1/\tau}\Big(\frac{(1-(\gamma-\wt\eps_{k-1}))^\tau(\gamma-\wt\eps_{k-1})}{(1-\gamma)^\tau \gamma}+1\Big)^{1/\tau}<1-\eta,
\ea \ee 
which is possible due to the fact that the expression on the left of the second line is increasing to $1$ in $k$, we find that the event on the right-hand side of~\eqref{eq:maxsubset} holds for finitely many $n$ only. Thus, almost surely, the event $\{\log( \wt I_n)/\log n<\gamma-\eps\}$ holds for finitely many $n$ only, irrespective of the value of $\eps$. With a similar argument, and using a sequence $(\eps_k)_{k\in\Z_+}$, defined as 
\be \label{eq:epsk}
\eps_k= \frac{1-\gamma}{2}\Big(1-\Big(\frac{\gamma+\eps_{k-1}}{\gamma}\Big)^{-1/\tau}\Big)+\frac{1}{2}\eps_{k-1},\quad k\geq 1,\qquad \eps_0=1-\gamma,
\ee 
we find that the maximum is not obtained at $n^{\gamma+\eps}\leq i\leq n$ for any $\eps>0$ almost surely as well, which proves the second part of~\eqref{eq:taufin1st}. This also allows for a tighter upper bound of the maximum. On the event that the maximum is obtained at an index $i$ such that $n^{\gamma-\eps}\leq i\leq n^{\gamma+\eps}$, 
\be \ba
\frac{\max_{\inn}\F_i\sum_{j=i}^{n-1}1/S_j}{(1-\gamma)b_{n^\gamma}\log n}& =\max_{n^{\gamma-\eps}\leq i\leq n^{\gamma+\eps}}\frac{ \F_i\sum_{j=i}^{n-1}1/S_j}{(1-\gamma)b_{n^\gamma}\log n}\\ & \leq \max_{i\in[ n^{\gamma+\eps}]}\frac{\F_i}{b_{n^{\gamma+\eps}}}\frac{\sum_{j=n^{\gamma-\eps}}^{n-1}1/S_j}{(1-\gamma)\log n}\frac{b_{n^{\gamma+\eps}}}{b_{n^\gamma}},
\ea \ee 
which, again using the asymptotics of $b_n$,~\eqref{eq:sumfitnessconv} and Lemma~\ref{lemma:asconvmax} converges almost surely to $(1+\eps/(1-\gamma))(1+\eps/\gamma)^{1/\tau}$. This upper bound decreases to $1$ as $\eps$ tends to zero, so that the upper bound can be chosen arbitrarily close to $1$ by choosing $\eps$ sufficiently small. Hence, the left-hand side exceeds $1+\delta$, for any $\delta>0$, only finitely many times. As the event on which this upper bound is constructed holds almost surely eventually for all $n$,  for any fixed $\eps>0$, the first part of~\eqref{eq:taufin1st} follows and which concludes the proof.
\end{proof}

{\cb The previous proposition shows that the size of the maximum conditional mean degree is roughly $m(1-\gamma)b_{n^\gamma}\log n$ and attained at a vertex with label of the order $n^{\gamma+o(1)}$. The next proposition studies the second-order growth rate of the maximum degree in a small window around $n^\gamma$. 

\begin{proposition}[Max expected degree, partial second order, \ref{ass:weightgumbel}-\ref{ass:weighttaufin}]\label{prop:condmeantaufin2ndpart}
	Consider the \\ same conditions as Proposition~\ref{prop:condmeantaufin}. Moreover, recall the sets $C_n$ from~\eqref{eq:cnin}, let $\ell$ be a strictly positive function such that $\lim_{n\to\infty}\log(\ell(n))^2/\log n=\zeta_0$ for some $\zeta_0\in[0,\infty)$, and let $\Pi$ be a Poisson point process on $(0,\infty)\times \R$ with intensity measure $\nu(\d t,\d x):=\d t\times \e^{-x}\d x$. Then, for any $0<s<t<\infty$,
	\be \label{eq:taufin2ndpart}
	\max_{i\in C_n(\gamma,s,t,\ell)}\frac{\Ef{}{\zni}-m(1-\gamma)b_{n^\gamma}\log n}{m(1-\gamma)a_{n^\gamma}\log n}\toindis \max_{\substack{(v,w)\in\Pi\\ v\in(s,t)}}w-\log v-\frac{\zeta_0(\tau+1)^2}{2\tau}.
	\ee
\end{proposition}

}

\begin{proof}
For ease of writing, we omit the arguments and write $C_n :=C_n(\beta,s,t,\ell)$. We use results from extreme value theory regarding the convergence of particular point processes to obtain the results. Let us define the point process 
\be \label{eq:pingumbel}
\Pi_n:=\sum_{i=1}^n \delta_{(i/n,(\F_i-b_n)/a_n)}.
\ee 
By~\cite{Res13}, when the $\F_i$ are i.i.d.\ random variables in the Gumbel maximum domain of attraction (which is the case for the~\ref{ass:weightgumbel}-\ref{ass:weighttaufin} sub-case), then  the weak limit of $\Pi_n$ is $\Pi$, a PPP on $(0,\infty)\times (-\infty,\infty]$ with intensity measure $\nu(\d t,\d x)=\d t \times \e^{-x}\d x$. Here, we understand the topology on $(-\infty,\infty]$ such that sets of the form $[a,\infty]$ for $a \in \R$ are compact and we are crucially using that the measure $e^{-x} \d x$ is finite on these compact sets.

Rather than considering the time-scale $n$ and all $\inn$, we consider the time-scale $\ell(n)n^\gamma$ and $i\in C_n$, and show that the rescaled conditional expected in-degrees can be written as a continuous functional of the point process $\Pi_{\ell(n)n^\gamma}$ with vanishing error terms. Thus, we write
\be \ba
\frac{ \F_i\sum_{j=i}^{n-1}1/S_j-b_{\ell(n)n^\gamma}\log (n^{1-\gamma}/\ell(n))}{a_{\ell(n)n^\gamma}\log (n^{1-\gamma}/\ell(n))}   
&  =\frac{\F_i-b_{\ell(n)n^\gamma}}{a_{\ell(n)n^\gamma}}\frac{\sum_{j=i}^{n-1}1/S_j}{\log(n^{1-\gamma}/\ell(n))}-\log\Big(\frac{i}{\ell(n)n^\gamma}\Big)\\
& \ +\frac{b_{\ell(n)n^\gamma}}{a_{\ell(n)n^\gamma}\log( n^{1-\gamma}/\ell(n))}\Big(\sum_{j=i}^{n-1}\frac{1}{S_j}-\log\Big(\frac{n}{i}\Big)\Big)\\
& \ -\Big(\frac{b_{\ell(n)n^\gamma}}{a_{\ell(n)n^\gamma}\log(n^{1-\gamma}/\ell(n))}- 1\Big)\log(i/\ell(n)n^\gamma).
\ea\ee 
We then let, for $0<s<t<\infty, f \in \R$,
\be
\wt C_n(f):=\{i\in C_n: (\F_i-b_{\ell(n)n^\gamma})/a_{\ell(n)n^\gamma} \geq f\}.
\ee 
Then, for $C_n$ (as well as $\wt C_n(f)$,
\be\ba \label{eq:gumbelmaxdiff}
\bigg|\max_{i\in C_n}&\frac{ \F_i\sum_{j=i}^{n-1}1/S_j-b_{\ell(n)n^\gamma}\log( n^{1-\gamma}/\ell(n))}{a_{\ell(n)n^\gamma}\log (n^{1-\gamma}/\ell(n))} \\
& \hspace{2cm} -\max_{i\in C_n}\bigg(\frac{(\F_i-b_{\ell(n)n^\gamma})\sum_{j=i}^{n-1}1/S_j}{a_{\ell(n)n^\gamma}\log(n^{1-\gamma}/\ell(n))}-\log\Big(\frac{i}{\ell(n)n^\gamma}\Big)\bigg)\bigg|\\
& \leq \frac{b_{\ell(n)n^\gamma}}{a_{\ell(n)n^\gamma}\log( n^{1-\gamma}/\ell(n))}\max_{i\in C_n}\Big|\sum_{j=i}^{n-1}1/S_j-\log(n/i)\Big|\\
& \hspace{2cm}+\Big|\frac{b_{\ell(n)n^\gamma}}{a_{\ell(n)n^\gamma}\log(n^{1-\gamma}/\ell(n))}-1 \Big|\max_{i\in C_n}|\log(i/\ell(n)n^\gamma)|.
\ea\ee
Since $\lim_{n\to\infty}\log(\ell(n))/\log n=0$, it immediately follows that $b_{\ell(n)n^\gamma}\sim b_{n^\gamma}$, $a_{\ell(n)n^\gamma}\sim a_{n^\gamma}$, $\log(n^{1-\gamma}/\ell(n))\sim (1-\gamma)\log n$, so that the fraction on the third line and the first term on the fourth line tend to one and zero, respectively. It also follows from~\eqref{eq:sumfitnessconv} that $\sum_{j=i}^{n-1}1/S_j-\log(n/i)$ converges almost surely for any fixed $i\in\N$, so the maximum on the second line tends to zero almost surely, as the sequence in the absolute value is a Cauchy sequence almost surely (and all $i\in C_n$ tend to infinity with $n$). Finally, we can bound the maximum on the last line by $\max\{|\log t|,|\log s|\}$, so that the left-hand side converges to zero almost surely.

From the convergence of $\Pi_n \toindis \Pi$, it follows that
\be 
\Big(i/(\ell(n)n^\gamma),\frac{\F_i-b_{\ell(n)n^\gamma}}{a_{\ell(n)n^\gamma}}\Big)_{i\in\wt C_n}\toindis (v,w)_{\!\!\!\substack{(v,w)\in\Pi\\ v\in[s,t],w \geq f}},
\ee
on the space of point measures equipped with  the vague topology. It is straight-forward to extend this convergence (using that $[s,t]\times[f,\infty]$ is a compact set) to 
show that 
\be 
\Big(i/(\ell(n)n^\gamma),\frac{\F_i-b_{\ell(n)n^\gamma}}{a_{\ell(n)n^\gamma}}, \frac{\sum_{j=i}^{n-1}1/S_j}{\log(n^{1-\gamma}/\ell(n))}\Big)_{i\in\wt C_n}\toindis (v,w,1)_{\!\!\!\substack{(v,w)\in\Pi\\ v\in[s,t],w\geq f}},
\ee
Hence, the continuous mapping theorem   yields
\be \label{eq:wtcnconv}
\max_{i\in \wt C_n}\frac{\F_i-b_{\ell(n)n^\gamma}}{a_{\ell(n)n^\gamma}}\frac{\sum_{j=i}^{n-1}1/S_j}{\log(n^{1-\gamma}/\ell(n))}- \log\Big(\frac{i}{\ell(n)n^\gamma}\Big)\toindis\!\!\! \max_{\substack{(v,w)\in \Pi\\ v\in[s,t],w\in[f,f']}}\!\!\!\Big(w- \log v\Big),
\ee
as element-wise multiplication and
taking the maximum of a finite number of elements is a continuous operation (which uses that by~\cite[Proposition 3.13]{Res13} vague convergence on a compact set is the same as pointwise convergence). 
Now, we intend to show that the same result holds when considering $i\in C_n$, that is, the distributional convergence still holds when omitting the constraint on the size of the $W_i$. Let $\eta>0$ be fixed,
and for any closed $D \subset \R$, let  let $D_\eta:=\{x\in  \R\,|\, \inf_{y\in D}|x-y|\leq\eta\}$ be its $\eta$-enlargement. We define the random variables and events
\be \ba
X_{n,i}&:=\frac{\F_i-b_{\ell(n)n^\gamma}}{a_{\ell(n)n^\gamma}}\frac{\sum_{j=i}^{n-1}1/S_j}{\log( n^{1-\gamma}/\ell(n))}- \log(i/(\ell(n)n^\gamma)),\quad \inn,\\
E_n(\eta)&:=\{|\max_{i\in C_n}X_{n,i}-\max_{i\in \wt C_n (f)}X_{n,i}|<\eta\},\qquad
A_n(\eta):=\{\max_{i\in C_n}X_{n,i}\in D_\eta\},
\ea \ee 
and note that $D_0=D$ by the definition of $D_\eta$. Then,
\be \ba \label{eq:etacond}
\mathbb{P}(& A_n(0))\leq \P{A_n(0)\cap E_n(\eta)}+\P{E_n(\eta)^c}.
\ea \ee 
Note that the first probability can be bounded from above by    
$\P{\max_{i\in \wt C_n(f)}X_{n,i}\in D_\eta}$ and
from~\eqref{eq:wtcnconv} we obtain that
\be \label{eq:0303-1}
\limsup_{n \rightarrow \infty}    \P{\max_{i\in \wt C_n}X_{n,i}\in D_\eta} \leq \mathbb{P}\Big(\max_{\substack{(v,w)\in\Pi\\ v\in[s,t], w \geq f}}w- \log v \in D_\eta\Big).
\ee 
To see that we can remove the restriction that $w \geq f$, we note that
\be \ba    \Big|\max_{\substack{(v,w)\in\Pi\\ v\in[s,t]}}\Big(w- \log v\Big)-\max_{\substack{(v,w)\in\Pi\\ v\in[s,t], w\geq f}}\Big(w- \log v\Big)\Big|&\leq \max\Big\{0,\max_{\substack{(v,w)\in\Pi\\ v\in[s,t], w\leq f}}w- \log v\Big\}\\
&\leq \max\Big\{0,f- \log s\Big\},
\ea \ee 
which tends to zero almost surely when $f\to-\infty$. Hence, using the above we arrive at 
\be \label{eq:0303-2}
\lim_{\eta\downarrow 0}\lim_{f\to -\infty}\limsup_{n\to \infty}\P{\max_{i\in \wt C_n}X_{n,i}\in D_\eta}\leq \mathbb{P}\Big(\max_{\substack{(v,w)\in\Pi\\ v\in[s,t]}}w- \log v \in D\Big).
\ee
The fact that we can take the limit $\eta \downarrow 0$ follows, since the properties of the Poisson point process imply that the maximum does not hit the boundary of $D$ almost surely.

What remains is to show that the second probability on the right-hand side of~\eqref{eq:etacond} tends to zero.
We bound  
\be\ba \label{eq:extendbound}
\Big|\max_{i\in C_n}X_{n,i}-\max_{i\in \wt C_n(f)}\!\!\!X_{n,i}\Big|
\leq     \max\Big\{0,\max_{i\in C_n \setminus \wt C_n(f)}X_{n,i}\Big\}. 
\ea\ee 
As we intend to let $f$ go to $-\infty$, we can assume $f<0$. Then, as all the terms $(\F_i-b_{\ell(n)n^\gamma})/a_{\ell(n)n^\gamma}$ are negative when $i\in C_n\backslash \wt C_n(f)$, we obtain the upper bound
\be 
\max\bigg\{0,f\frac{\sum_{j=t\ell(n)n^\gamma}^{n-1}1/S_j}{\log( n^{1-\gamma}/\ell(n))}- \log s\bigg\}\toas \max\Big\{0,f- \log s\Big\},
\ee
as $n$ tends to infinity. Then, as $f$ tends to $-\infty$, the right-hand side tends to zero. So, this term tends to zero almost surely as $n\to\infty$ and then $f\to-\infty$. This concludes that the left-hand side of~\eqref{eq:extendbound} converges to zero in probability, and therefore the second probability on the right-hand side of~\eqref{eq:etacond} converges to zero as $n\to\infty$, then $f\to-\infty$ for any $\eta>0$. Combining this with~\eqref{eq:0303-2}, we obtain from the Portmanteau lemma 
\be 
\max_{i\in C_n(\gamma,s,t,\ell(n))}\frac{\F_i-b_{\ell(n)n^\gamma}}{a_{\ell(n)n^\gamma}}\frac{\sum_{j=i}^{n-1}1/S_j}{\log( n^{1-\gamma}/\ell(n))}- \log(i/(\ell(n)n^\gamma))\toindis \max_{\substack{(v,w)\in\Pi\\ v\in[s,t]}}w- \log v.
\ee 
Hence, together with~\eqref{eq:gumbelmaxdiff} and Slutsky's theorem, it follows that 
\be 
\max_{i\in C_n(\gamma,s,t,\ell(n))}\frac{ \F_i\sum_{j=i}^{n-1}1/S_j-b_{\ell(n)n^\gamma}\log (n^{1-\gamma}/\ell(n))}{a_{\ell(n)n^\gamma}\log (n^{1-\gamma}/\ell(n))}\toindis \max_{\substack{(v,w)\in\Pi\\ v\in (s,t)}}w- \log v, 
\ee 
so that the same results hold for the re-scaled maximum conditional mean degree. What remains is to show that the same result is obtained when $\ell(n)n^\gamma$ is replaced with $n^\gamma$ in the first and second-order rescaling, from which~\eqref{eq:taufin2ndpart} follows. To obtain this, we show that 
\be \ba\label{eq:gammacond}
\lim_{n\to\infty}\frac{(1-\gamma)a_{n^\gamma}\log n}{a_{\ell(n)n^\gamma}\log(n^{1-\gamma}/\ell(n))}&=1,\\ \lim_{n\to\infty}\frac{b_{\ell(n)n^\gamma}\log(n^{1-\gamma}/\ell(n))-(1-\gamma)b_{n^\gamma}\log n}{(1-\gamma)a_{n^\gamma}\log n}&=-\frac{\zeta_0(\tau+1)^2}{2\tau},
\ea\ee 
after which the convergence to types theorem yields the required result~\cite[Proposition $0.2$]{Res13}.

First, it immediately follows from Remark~\ref{remark:weights} that
\be 
\frac{a_{\ell(n)n^\gamma}\log(n^{1-\gamma}/\ell(n))}{(1-\gamma)a_{n^\gamma}\log n}=\Big(1+\frac{\log (\ell(n))}{\gamma\log n}\Big)^{1/\tau-1}\Big(1-\frac{\log (\ell(n))}{(1-\gamma)\log n}\Big)\to 1,
\ee 
since we assume that $\log(\ell(n))^2/\log n\to \zeta_0$, so that the first condition in~\eqref{eq:gammacond} is satisfied. Then,
\be \ba 
\phantom{0}& b_{\ell(n)n^\gamma}-b_{n^\gamma}\\ 
& = c_1(\gamma \log n)^{1/\tau}\Big[\Big(1+\frac{\log(\ell(n))}{\gamma\log n}\Big)^{1/\tau}-1\Big]\\
&\quad+\frac{c_1}{\tau}(\gamma\log n)^{1/\tau-1}\Big[\Big(1+\frac{\log( \ell(n))}{\gamma\log n}\Big)^{1/\tau-1}-1\Big]\Big(\frac{b}{\tau}\log(\gamma\log n)+b\log c_1+\log a\Big)\\
&\quad+\frac{bc_1}{\tau^2}(\gamma\log n)^{1/\tau-1}\Big(1+\frac{\log(\ell(n))}{\gamma\log n}\Big)^{1/\tau-1}\log\Big(1+\frac{\log(\ell(n))}{\gamma \log n}\Big).
\ea \ee 
Also, 
\be \ba 
b_{\ell(n)n^\gamma}\log(\ell(n))={}&c_1(\gamma\log n)^{1/\tau}\log(\ell(n))\Big(1+\frac{\log(\ell(n))}{\gamma \log n}\Big)^{1/\tau}\\
&+\frac{c_1}{\tau}(\gamma\log n)^{1/\tau-1}\log(\ell(n))\Big(1+\frac{\log (\ell(n))}{\gamma \log n}\Big)^{1/\tau-1}\\
&\qquad \times\Big[\frac{b}{\tau}\log(\gamma\log n)+b\log c_1+\frac{b}{\tau}\log\Big(1+\frac{\log(\ell(n))}{\gamma\log n}\Big)+\log a\Big].
\ea \ee 
Using Taylor expansions for the terms containing $1+\log(\ell(n))/(\gamma\log n)$ in both these expressions and combining them, yields
\be \ba 
b_{\ell(n)n^\gamma}&\log(n^{1-\gamma}/\ell(n))-b_{n^\gamma}(1-\gamma)\log n\\
={}& (b_{\ell (n)n^{\gamma}}-b_{n^\gamma})(1-\gamma)\log n-b_{\ell(n)n^{\gamma}}\log(\ell(n))\\
={}&-\frac{c_1(\tau+1)}{2\tau}(\gamma\log n)^{1/\tau-1}\log(\ell(n))^2+\frac{c_1b}{\tau}(\gamma \log n)^{1/\tau-1}\log(\ell(n))\\
&-c_1\Big(\frac{b}{\tau}\log(\gamma \log n)+b\log c_1+\log a\Big)(\gamma \log n)^{1/\tau-1}\log(\ell(n))+x_n,
\ea\ee
where $x_n$ consists of lower order terms such that $x_n=o((\log n)^{1/\tau-1}\log(\ell(n)))$. Thus, we obtain
\be \ba\label{eq:bndiff}
& \frac{b_{\ell(n)n^\gamma}\log(n^{1-\gamma}/\ell(n))-(1-\gamma)b_{n^\gamma}\log n}{(1-\gamma)a_{n^\gamma}\log n}\\
& \quad\sim -\frac{((\tau+1)\log( \ell(n)))^2}{2\tau\log n}+\frac{b(\tau+1)}{\tau}\frac{\log(\ell(n))}{\log n}\\
&\qquad -\frac{\tau+1}{b}\Big[\frac{1}{\tau}\log(\gamma\log n )+\log c_1+\frac{\log a}{b}\Big]\frac{\log( \ell(n))}{\log n}.
\ea\ee 
Since $(\log \ell(n))^2/\log n$ converges to $\zeta_0\in[0,\infty)$, it follows that the second condition in~\eqref{eq:gammacond} is indeed satisfied, which completes the proof.
\end{proof}

{\cb Finally, the following proposition identifies the `correct' second-order scaling of the maximum conditional mean degree when considering all vertices in the graph. Somewhat surprisingly, this scaling is different and, as opposed to Proposition~\ref{prop:condmeantaufin2ndpart}, the limit is not random but deterministic. 

\begin{proposition}[Max expected degree, second order, \ref{ass:weightgumbel}-\ref{ass:weighttaufin}]\label{prop:condmeantaufin2ndcomp}
	Under the same conditions as in Proposition~\ref{prop:condmeantaufin},
	\be \label{eq:taufin2}
	\max_{\inn}\frac{\Ef{}{\zni}-m(1-\gamma)b_{n^\gamma}\log n}{m(1-\gamma)a_{n^\gamma}\log n\log\log n}\toinp \frac 12.
	\ee
\end{proposition}

}

\begin{proof}
It suffices to study $\max_{\inn}W_i\log(n/i)$ rather than $\max_{\inn}\Ef{}{\zni}$, since
\be\ba\label{eq:Ewznidiff}
\frac{|\max_{\inn}\Ef{}{\zni}-\max_{\inn}W_i\log (n/i)|}{a_n\log n\log\log n}&\leq \frac{\max_{\inn}W_i}{a_n\log n\log\log n}\Big|\sum_{j=i}^{n-1}1/S_j-\log(n/i)\Big|\\
&=\frac{1}{a_n\log n\log\log n}\max_{\inn}W_i|Y_n-Y_i|,
\ea \ee 
where $Y_n:=\sum_{j=1}^{n-1}1/S_j-\log n$. By~\eqref{eq:sumfitnessconv}, $Y_n$ converges almost surely to $Y$, which is almost surely finite as well. Hence, $\sup_{i\in\N}Y_i$ is almost surely finite, too. Since $\max_{\inn} W_i/b_n\toas 1$ by Lemma~\ref{lemma:asconvmax} and $a_n\log n\sim b_n/\tau$,                                                                                                                                                                                                                                                                                                                                                                                                                                                                                                                                                                                                                                                                                                                                                                                                                                                                                                                                                            this yields the upper bound
\be 
\frac{\max_{\inn}W_i}{a_n\log n}\frac{|Y_n|+\sup_{i\in\N}|Y_i|}{\log\log n}\toas 0.
\ee 
It thus suffices to prove that 
\be \label{eq:second_simplified}
\max_{\inn}\frac{W_i\log(n/i)-(1-\gamma)b_{n^\gamma}\log n}{(1-\gamma)a_{n^\gamma}\log n\log\log n}\toinp \frac 12.
\ee 
Therefore, we set	for $i \in [n]$.
\be 
X_{n,i}:=\frac{W_i\log(n/i)-(1-\gamma)b_{n^\gamma}\log n}{(1-\gamma)a_{n^\gamma}\log n\log \log n}.
\ee 
For an upper bound on the maximum of the $X_{n,i}$, we consider different ranges of indices $i$ separately. 
We will concentrate on the range $i \geq n^\gamma$, the case $i \leq n^\gamma$ follows by completely 
analogous arguments. For a lower bound on the maximum, we will choose a convenient range of indices.

Let $\eps\in(0,1-\gamma)$.
First of all, we notice that by the same argument as in the proof of the first line of~\eqref{eq:taufin1st}, there exists a constant $C<1$ (which is similar
to the constant in~\eqref{eq:eps_limit}) such that almost surely
\[ \max_{n^{\gamma+\eps}<i\leq n} W_i \log (n/i) \leq C(1-\gamma)b_{n^\gamma}\log n . \]
It then follows that the rescaled maximum diverges to $-\infty$ almost surely.

As the next step, we consider the range of $n^\gamma \leq i \leq e^{k_n} n^\gamma$, where
$k_n = \sqrt{\log n} \log \log n$. This will turn out to give the main contribution to the maximum of the $X_{n,i}$.
We now fix $x>1/2$ and let $\delta > 0$. Then,
\be \ba \label{eq:maxbound}
\mathbb{P}{}&\bigg(\max_{n^\gamma\leq i\leq \e^{k_n}n^\gamma}\frac{W_i\log(n/i)-(1-\gamma)b_{n^\gamma}\log n}{(1-\gamma)a_{n^\gamma}\log n\log \log n}\leq x\bigg)\\
&=\prod_{i=n^\gamma}^{e^{k_n}n^\gamma}\P{W_i\leq \frac{1-\gamma}{1-\log i/\log n}(b_{n^\gamma}+a_{n^\gamma}x\log \log n)}\\
&\geq \exp\bigg(-(1+\delta)\sum_{i=n^\gamma}^{\e^{k_n}n^\gamma}\P{W\geq \frac{1-\gamma}{1-\log i/\log n}(b_{n^\gamma}+a_{n^\gamma}x\log \log n)}\bigg)\\
&\geq \exp\bigg(-(1+\delta)\sum_{j=1}^{k_n}\sum_{i=\e^{j-1}n^\gamma}^{\e^j n^\gamma}\P{W\geq \frac{1-\gamma}{1-\gamma-(j-1)/\log n}(b_{n^\gamma}+a_{n^\gamma}x\log \log n)}\bigg),
\ea \ee 
where we use that $1-y\geq \e^{-(1+\delta)y}$ for all $y$ sufficiently small and that the tail probability is decreasing to zero, uniformly in $i$, in the last two steps. Since the probability is no longer dependent on $i$, we can also omit the inner sum and replace it by $\lfloor \e^j n^\gamma\rfloor -\lceil \e^{j-1}n^\gamma\rceil\leq (\e-1)\e^{j-1}n^\gamma$. Also using that $\P{W\geq y}\leq (1+\delta)ay^b\exp(-(y/c_1)^\tau)$ for all $y$ sufficiently large, it follows that for any $x\in\R$ and $n$ sufficiently large we obtain the lower bound
\be \ba \label{eq:doubleexp}
\exp\bigg(-{}&(1+\delta)^2a(\e-1)\sum_{j=1}^{k_n} \e^{j-1}n^\gamma\Big(\frac{1-\gamma}{1-\gamma -(j-1)/\log n}(b_{n^\gamma}+a_{n^\gamma}x\log \log n)\Big)^b\\
&\hspace{2.4cm}\times \exp\Big(-\Big(\frac{1}{c_1}\frac{1-\gamma}{1-\gamma -(j-1)/\log n}(b_{n^\gamma}+a_{n^\gamma}x\log \log n)\Big)^\tau\Big)\bigg).
\ea \ee 
We first bound the fraction $(1-\gamma)/(1-\gamma)-(j-1)/\log n)$ from above by $1+\delta$ if $b\geq 0$ and from below by $1$ if $b<0$, which holds uniformly in $j$ for $n$ large, in the outer exponent. Then, when we combine all other terms that contain $j$, we find
\be \label{eq:jterms}
\exp\Big((j-1)-\Big(\frac{(1-\gamma)\log n}{(1-\gamma)\log n-(j-1)}\Big)^\tau\Big(\frac{b_{n^\gamma}+a_{n^\gamma}x\log \log n}{c_1}\Big)^\tau\Big).
\ee 
Define $\e_n:=(1-\gamma)\log n$. By a Taylor expansion we have that there exists a constant $C_\tau > 0$ such
that uniformly for $|y| \leq  \e_n/2$,
\begin{equation}\label{eq:2nd_taylor} 1  + \tau \frac{y}{\e_n} + \frac{\tau(1+\tau)}{2}  \Big( \frac{y}{\e_n}\Big)^2  \leq \Big(\frac{\e_n}{\e_n-y}\Big)^{\tau} 
	\leq 1 + \tau \frac{y}{\e_n}  + C_\tau \Big( \frac{y}{\e_n}\Big)^2 . \end{equation}
We will also need that again by a Taylor expansion and the explicit form of $a_n, b_n$ as stated in Remark~\ref{remark:weights}, we have that
\be\ba 
\Big(\frac{b_{n^\gamma}+a_{n^\gamma}x\log \log n}{c_1}\Big)^\tau
&= \Big(\frac{b_{n^\gamma}}{c_1}\Big)^\tau\Big(1  + \tau  \frac{a_{n^\gamma}}{b_{n^\gamma}}  x\log \log n (1+o(1))\Big)\\
&= \Big(\frac{b_{n^\gamma}}{c_1}\Big)^\tau + x\log \log n (1+o(1)) ,
\ea\ee
and similarly since $\gamma = 1/(1+\tau)$, 
\[ \frac{(b_{n^{\gamma}}/c_1)^\tau}{\e_n} =\frac{1}{(1-\gamma)} \frac{\log n^\gamma}{\log n} + \mathcal O\Big( \frac{\log \log n}{\log n}\Big) 
= \frac{1}{\tau} + \mathcal O\Big( \frac{\log \log n}{\log n}\Big) . \]
Combining all these estimates, we obtain the following upper bound on~\eqref{eq:jterms}:
\begin{equation}\label{eq:interm_step}     \begin{aligned} 
		\exp \Big((j{}&-1)  -\Big(\frac{(1-\gamma)\log n}{(1-\gamma)\log n-(j-1)}\Big)^\tau\Big(\frac{b_{n^\gamma}+a_{n^\gamma}x\log \log n}{c_1}\Big)^\tau\Big)  \\
		\leq \exp\Big({}& (j-1) \\ 
		{}&- \Big(1+ \tau \frac{j-1}{\e_n} + \frac{\tau(1+\tau)}{2} \Big( \frac{j-1}{\e_n}\Big)^2 \Big) \big( (b_{n^\gamma}/c_1)^\tau + x \log \log n (1+o(1))\big) \Big)\\
		= \exp \Big({}& - (b_{n^\gamma}/c_1)^\tau - x (\log \log n)(1+o(1)) \\
		{}& - (j-1)\, \mathcal O\Big(\frac{\log \log n}{\log n}\Big)  
		- \frac{(1+\tau)(j-1)^2  }{2(1-\gamma)\log n}(1+o(1)) \Big) \\
		\leq \exp \big({}& - (b_{n^\gamma}/c_1)^\tau - x (\log \log n)(1+o(1)) + o(1)  \big) ,
\end{aligned}  \end{equation}
where we used in the last step that $j \leq k_n = o(\log/ \log \log n)$ and the last term in the exponent is negative.
Hence, combining this with~\eqref{eq:maxbound} and~\eqref{eq:doubleexp}, we obtain
\[ \begin{aligned} 
	\mathbb{P}{}&\bigg(\max_{n^\gamma\leq i\leq \e^{k_n}n^\gamma}\frac{W_i\log(n/i)-(1-\gamma)b_{n^\gamma}\log n}{(1-\gamma)a_{n^\gamma}\log n\log \log n}\leq x\bigg)\\
	& \geq \exp \big( - (1+\delta)^{2+b\vee 0} a(\e -1) k_n n^\gamma (b_{n^\gamma})^b \exp \{ - (b_{n^\gamma}/c_1)^\tau - x (\log \log n)(1+o(1)) \big) \\
	& =   \exp\big(-(1+\delta)^{2+b\vee 0}(\e-1)k_n (\log n)^{-x (1+o(1))}\big),
\end{aligned} \]
where we used that $n \P{W > b_n} \sim a n b_n^b e^{-(b_n/c_1)^\tau} \sim 1$ (see e.g.\ \cite[Equation (1.1')]{Res13} with $x = 0$). Hence, by our choice of $k_n = \sqrt{\log n} \log \log n$ and $x > 1/2$, the  latter expression converges to $1$
and we have shown that for any $\eta > 0$, with high probability, 
\[ \max_{n^\gamma\leq i\leq \e^{k_n}n^\gamma} X_{n,i} \leq \frac 12 + \eta. \]
Next, we consider an upper bound on the maximum for the range $e^{k_n}n^\gamma \leq i \leq n^{\gamma + \eps}$, where  $\eps \in (0,  1- \gamma)$ and $k_n = \sqrt{\log n}\log \log n$ are as above. Again, we take $x \in \R$ and use the same idea as in the first step. This time,  however, we need to be more careful in the intermediate step~\eqref{eq:interm_step}. 
For $k_n \leq j \leq \eps \log n$, 
we obtain an upper bound on the expression in~\eqref{eq:jterms}
\[ \begin{aligned} &\exp \Big((j-1)  -\Big(\frac{(1-\gamma)\log n}{(1-\gamma)\log n-(j-1)}\Big)^\tau\Big(\frac{b_{n^\gamma}+a_{n^\gamma}x\log \log n}{c_1}\Big)^\tau\Big)  \\
	& \leq \exp \Big( - (b_{n^\gamma}/c_1)^\tau - x (\log \log n)(1+o(1)) \\
	& \hspace{3cm}- (j-1) \mathcal O\Big(\frac{\log \log n}{\log n}\Big)  
	- \frac{(1+\tau)(j-1)^2  }{2(1-\gamma)\log n}(1+o(1)) \Big) \\
	& \leq \exp \Big( - (b_{n^\gamma}/c_1)^\tau - x (\log \log n)(1+o(1))  + C_1 (j-1) \frac{\log \log n}{\log n} 
	- C_2 (j-1)^2 \frac{1}{\log n} \Big) , 
\end{aligned} \]
for suitable  constants $C_1, C_2 > 0$. We now note that the right-hand side is decreasing in $j$ as long as
$j > (C_1)/(2C_2) \log \log n$. However, $k_n \gg \log \log n$, so that we obtain the following upper bound on the previous display that holds
uniformly for $k_n \leq j \leq \eps\log n$,
\[    \exp \Big( - (b_{n^\gamma}/c_1)^\tau - x (\log \log n)(1+o(1))  + C_1 k_n \frac{\log \log n}{\log n} 
- C_2 k_n^2 \frac{1}{\log n} \Big) . \]
Using this bound in the same way as before (following the analogous steps as in~\eqref{eq:maxbound} and~\eqref{eq:doubleexp}) we obtain for any $x \in \R$,
\[ \begin{aligned} 
	\mathbb{P}{}&\bigg(\max_{\e^{k_n} n^\gamma\leq i\leq  n^{\gamma+\eps}}\frac{W_i\log(n/i)-(1-\gamma)b_{n^\gamma}\log n}{(1-\gamma)a_{n^\gamma}\log n\log \log n}\leq x\bigg)\\
	& \geq \exp \Big( - (1+\delta)^{2+b\vee 0}(\e -1) \eps(\log n)^{1- x (1+o(1))}
	\exp \Big( C_1 k_n \frac{\log \log n}{\log n} 
	- C_2 k_n^2 \frac{1}{\log n} \Big) 
	\Big) 
\end{aligned} \]
Finally, since $k_n = \sqrt{\log n} \log \log n$, the right-hand side converges to $1$. Therefore, we have shown that
for any $x \in \R$, with high probability 
\[ \max_{\e^{k_n} n^\gamma\leq i\leq n^{\gamma+\eps}} X_{n,i} \leq  x. \]

In a similar way to the upper bound, we can construct a lower bound on the maximum by restricting 
to the indices $1 \leq i \leq k_n'$, where $k_n' = \sqrt{\log n} / \log \log n $. 
In this case, we consider $x < 1/2$ and use $k_n'$ instead of $k_n$ in the argument above. We omit the $(1+\delta)$ term in~\eqref{eq:maxbound}, bound the probability from below using $(1-\delta)$ rather than $(1+\delta)$, use the upper bound in~\eqref{eq:2nd_taylor} and obtain 
thus
\be \ba 
\mathbb{P}{}&\bigg(\max_{n^\gamma\leq i\leq \e^{k_n'}n^\gamma}\frac{W_i\log(n/i)-(1-\gamma)b_{n^\gamma}\log n}{(1-\gamma)a_{n^\gamma}\log n\log \log n}\leq x\bigg)\\
& \leq     \exp\Big(\!-(1-\delta)^{1+b\wedge 0}(\e-1)k_n' (\log n)^{- x(1+o(1))}\exp\Big(\! -\frac{C_\tau}{2\tau(1-\gamma)}\frac{(k_n')^2}{\log n}(1+o(1))\Big)\Big).
\ea \ee 
The latter term converges to zero as $x < 1/2$ and therefore, we have shown that
for any $\eta > 0$, with high probability
\[ \max_{i \in [n]} X_{n,i} \geq \max_{1 \leq i \leq e^{k_n'} n^\gamma} X_{n,i} \geq \frac 12 - \eta. \]
This completes the argument for all $n^\gamma \leq i \leq n$. The argument for $1 \leq i \leq n^\gamma$ 
works completely analogously, so that we have shown~\eqref{eq:second_simplified}, which completes the proof.
\end{proof}

{\cb 

\subsection{Maximum conditional mean degree, \ref{ass:weightgumbel}-\ref{ass:weighttau0} sub-case}

In this section we state and prove two propositions that study the behaviour of the maximum conditional mean in-degree in WRGs, when the vertex-weights satisfy the \ref{ass:weightgumbel}-\ref{ass:weighttau0} sub-case of Assumption~\ref{ass:weights}. 
The first provides two results: the first-order asymptotics of the maximum conditional mean in-degree and the label of the vertex that attains this maximum. Its second result shows the second-order scaling of the maximum conditional mean in-degree among all vertices in a small window around $t_nn$, with a random limit. As is the case in Section~\ref{sec:gumbrv}, we see that this second-order scaling is in fact incorrect when we consider the full range of all vertices, as shown in the second proposition. Here, we also observe a phase transition at $\tau=3$, where the second-order scaling changes. 

\begin{proposition}[Max expected degree, first and second order, \ref{ass:weightgumbel}-\ref{ass:weighttau0}]\label{prop:condmeantau0}
	Consider the WRG model as in Definition~\ref{def:wrg} and suppose the vertex-weights satisfy the~\ref{ass:weightgumbel}-\ref{ass:weighttau0} sub-case in Assumption~\ref{ass:weights} and let $t_n:=\exp(-\tau \log n/\log(b_n))$. Then,
	\be\label{eq:tau0}
	\Big(\max_{\inn}\frac{\Ef{}{\zni}}{mb_{t_nn}\log(1/t_n)},\frac{\log \wt I_n}{\log n}\Big)\toinp (1,1).
	\ee
	Moreover, recall $C_n$ from~\eqref{eq:cnin} and let $\Pi$ be a Poisson point process on $(0,\infty)\times \R$ with intensity measure $\nu(\d t,\d x):=\d t \times \e^{-x}\d x$. Then, for any $0<s<t<\infty$,
	\be\label{eq:rav2ndmean} 
	\max_{i\in C_n(1,s,t,t_n)}\frac{\Ef{}{\zni}-mb_{t_nn}\log(1/t_n)}{ma_{t_nn}\log(1/t_n)}\toindis \max_{\substack{(v,w)\in\Pi\\ v\in(s,t)}}w-\log v,
	\ee 
\end{proposition}
}
\begin{proof}
First, we show that, similar to~\eqref{eq:Ewznidiff},
\be \label{eq:conc}
\Big|\max_{\inn}\frac{\F_i\sum_{j=i}^{n-1}1/S_j}{b_n}-\max_{\inn}\frac{\F_i}{b_n}\log(n/i)\Big|\toinp 0,
\ee 
so that in what follows we can work with the rightmost expression in the absolute value rather than the leftmost. This directly follows from writing the absolute value as 
\be \ba 
\Big|\max_{\inn}\frac{\F_i\sum_{j=i}^{n-1}1/S_j}{b_n}-\max_{\inn}\frac{\F_i}{b_n}\log(n/i)\Big| & \leq \max_{\inn}\frac{\F_i}{b_n}\Big|\sum_{j=i}^{n-1}1/S_j-\log(n/i)\Big|\\\ & =\max_{\inn}\frac{\F_i}{b_n}|Y_n-Y_i|,
\ea \ee 
where $Y_n:=\sum_{j=1}^{n-1}1/S_j-\log n$, which converges almost surely by~\eqref{eq:sumfitnessconv}. We then split the maximum into two parts to obtain the upper bound, for any $\gamma\in(0,1)$,
\be 
\max_{i\in [n^\gamma]}\frac{\F_i}{b_{n^\gamma}}(|Y_n|+\sup_{j\geq 1}|Y_j|)\frac{b_{n^\gamma}}{b_n}+\max_{n^\gamma\leq i \leq n}\frac{\F_i}{b_n}\max_{n^\gamma \leq i \leq n}|Y_n-Y_i|.
\ee 
The first maximum converges to $1$ in probability, the term in the brackets converges almost surely and the second fraction tends to zero, as we recall from Remark~\ref{remark:weights} that $b_n=g(\log n)$ with $g$ a rapidly-varying function at infinity. This implies, for any $\gamma\in(0,1)$, by the definition of a rapidly-varying function, that $b_{n^\gamma}/b_n=g(\gamma\log n)/g(\log n)$ converges to zero with $n$. Similarly, the second maximum converges to $1$ in probability and the third maximum tends to zero almost surely, as $Y_n$ is a Cauchy sequence almost surely. In total, the entire expression tends to zero in probability.

For the next part, we define   
\be 
\ell(x):=c_1+c_2x^{-1}\Big(\frac{b}{\tau}\log x +b\log c_1+\log a\Big).
\ee 
Then, as we are working in the~\ref{ass:weightgumbel}-\ref{ass:weighttau0} sub-case in Assumption~\ref{ass:weights},
we can write $b_n  =\exp( (\log n)^{1/\tau} \ell(\log n))$.

Using $t_n$ we can show that for any fixed $r\in\R$ or $r=r(n)$ that does not grow `too quickly' with $n$, $b_{t^r_nn}/b_n\sim \e^{-r}$. Namely, uniformly in $r = r(n) \leq C \log \log (b_n)$ (for any constant $C>0$),
\be \ba \label{eq:bntn}
\frac{b_{t^r_nn}}{b_n}=\exp\bigg({}&(\log n)^{1/\tau}\Big(\Big(1+r\frac{\log t_n}{\log n}\Big)^{1/\tau} {\ell\Big(\log n\Big(1+r\frac{\log t_n}{\log n}\Big)\Big)}-\log n \Big)\bigg)\\
\sim\exp\Big({}&(\log n)^{1/\tau}\Big(\ell\Big(\log n\Big(1+r\frac{\log t_n}{\log n}\Big)\Big)-\ell(\log n)\Big)\\
&+(1/\tau) r\log t_n(\log n)^{1/\tau-1}\ell\Big(\log n\Big(1+r\frac{\log t_n}{\log n}\Big)\Big)\Big), 
\ea\ee 
where we applied a Taylor approximation to $(1+r\log t_n/\log n)^{1/\tau}$, which holds uniformly in $r$ as long as $r=o(\log n/\log t_n)=o(\log b_n)$. It is elementary to show that for such $r$, the first term in the exponent on the last line of~\eqref{eq:bntn} tends to zero. Thus, uniformly in $r \leq C \log \log (b_n)$,
\be \label{eq:btrn}
\frac{b_{t^r_nn}}{b_n}\sim \exp\bigg(-r\frac{\ell(\log n(1+r\log t_n/\log n))}{\ell(\log n)}\bigg)\sim \e^{-r},
\ee 
where the last step follows a similar argument to the one used to show that the first term on the right-hand side of~\eqref{eq:bntn} tends to zero.

We thus note that by~\eqref{eq:btrn} and~\eqref{eq:conc} it suffices to show that 
\be \label{eq:tnscale}
\max_{\inn}\frac{\F_i\log(n/i)}{b_n\log(1/t_n)}\toinp 1/\e, 
\ee 
to prove~\eqref{eq:tau0}. We start by providing a lower bound to the left-hand side of~\eqref{eq:tnscale}. For some fixed $r>0$, we write
\be\ba
\max_{\inn}\frac{\F_i\log(n/i)}{b_n\log(1/t_n)}\geq \max_{i \in[t_n^rn]}\frac{\F_i}{b_{t^r_nn}}\frac{\log(n/(t^r_nn))}{\log(1/t_n)}\frac{b_{t^r_nn}}{b_n}=\max_{i \in[t_n^rn]}\frac{\F_i}{b_{t^r_nn}}r\frac{b_{t^r_nn}}{b_n}.
\ea\ee 
By~\eqref{eq:btrn}, it follows that this lower bound converges in probability to $r\e^{-r}$. To maximise this expression, we choose $r=1$ giving the value $1/\e$ as claimed. For an upper bound, we split the maximum into multiple parts which cover different ranges of indices $i$. First, for ease of writing, let us denote
\be 
X_{n,i}:=\frac{\F_i\log(n/i)}{b_n\log (1/t_n)}.
\ee
Fix $\eps > 0$, then  set $N =\lceil 2 \log \log (b_n) / \eps \rceil$, and define
\[ r_0 = \e^{-1}, \mbox{ and } \quad r_i = r_0 + \eps i \quad    \mbox{for } i = 1, \ldots, N. \]
Then,
\be \ba \label{eq:totalmax}
\max_{\inn}X_{n,i}\leq \max\Big\{\max_{ i\in[t_n^{r_N}n]}X_{n,i},
\max_{k = 1, \ldots, N}
\max_{t^{r_{k}}_nn<i\leq t_n^{r_{k-1}}n}X_{n,i},\max_{t^{r_0}_n n< i\leq n}X_{n,i}\Big\}.
\ea\ee 
We now bound each of these three parts separately. We start with the middle term and note that for $k \in \{1, \ldots, N\}$,
\[  \max_{t^{r_{k}}_nn< i\leq t_n^{r_{k-1}}n}X_{n,i}
=  \max_{t^{r_{k}}_nn< i\leq t_n^{r_{k-1}}n}\frac{\F_i\log(n/i)}{b_n\log (1/t_n)}\leq r_k \frac{ b_{ t_n^{r_{k-1}}n }}{b_n} \max_{ t^{r_{k}}_nn< i\leq t_n^{r_{k-1}}n}
\frac{\F_i}{b_{ t_n^{r_{k-1}}n}} . \]
If we now define for $k = 0,\ldots, N$,
\[ A_n(k) :=  \max_{ t^{r_{k+1}}_nn< i\leq t_n^{r_{k}}n}
\frac{\F_i}{b_{ t_n^{r_{k}}n}} , \]
then, by~\eqref{eq:btrn}, we have that
\begin{equation}\label{eq:0807-1} \begin{aligned} \max_{k = 1, \ldots, N} 
		\max_{t^{r_{k}}_nn< i\leq t_n^{r_{k-1}}n}X_{n,i}
		&  \leq  (1+\eps) \max_{k \in[N]} r_k \e^{-r_{k-1}} 
		A_n(k - 1) \\
		&  \leq (1+\eps) \sup_{x \geq 1/\e} x \e^{-x + \eps} \max_{k\in[N]}
		A_n(k-1)  \\
		& = (1+\eps)  \e^{-1 + \eps} \max_{k\in[N]}
		A_n(k-1), \end{aligned}
	. \end{equation}
using as before that $x \mapsto x \e^{-x}$ is maximised at $x = 1$. Similarly, we can bound the the last term in~\eqref{eq:totalmax} as
\begin{equation}\label{eq:0807-2}   \max_{t^{r_0}_n n< i\leq n}X_{n,i}
	\leq r_0 \max_{t^{r_0}_n n< i\leq n} \frac{\F_i}{b_{n}} = \e^{-1} A_n,
\end{equation}
where we recall that $r_0 = 1/\e$  and we set $A_n := \max_{t^{r_0}_n n<i\leq n} \F_i/b_n$. 
Finally, for the first term in~\eqref{eq:totalmax}, we get that
\begin{equation} \label{eq:0807-3}
	\max_{ i\in[t_n^{r_N}n]}X_{n,i}
	\leq \frac{b_{t_n^{r_N} n}}{b_n}  \frac{\log n}{\log(1/t_n)}\max_{i\in[t^{r_N}_nn]}\frac{W_i}{b_{t^{r_N}_nn}}
	\leq \frac{1+\eps}{\tau} e^{-r_N} \log (b_n)  \max_{i\in[t^{r_N}_nn]}\frac{W_i}{b_{t^{r_N}_nn}}= o_\mathbb{P}(1),
\end{equation} 
where we use that $r_N \geq 2 \log \log (b_n)$ by definition.

Combining~\eqref{eq:totalmax} with the estimates in
~\eqref{eq:0807-1}-\eqref{eq:0807-3}, we obtain 
\be\label{eq:finub} 
\max_{i \in [n]} X_{n,i} \leq (1+\eps)\e^{-1 + \eps} \max\Big\{\max_{k\in[N]}A_n(k-1),A_n\Big\}. 
\ee
Since $\eps > 0$ is arbitrary, it suffices to show that the maximum on the right-hand side is bounded by $1+\eps$ with high probability. As the random variables follow a distribution as in the~\ref{ass:weightgumbel}-\ref{ass:weighttau0} case in Assumption~\ref{ass:weights}, we can write using a union bound and $C>0$ large,
\be \ba 
\mathbb{P}\Big(\max_{\inn}&\frac{\F_i}{b_n}\geq 1+\eps\Big)\leq  Cn\log((1+\eps)b_n)^b\exp(-( \log((1+\eps)b_n)/c_1)^\tau)\\
&=Cn\log(b_n)^b \Big(1+\frac{\log(1+\eps)}{\log(b_n)}\Big)^b\exp\Big(-(\log(b_n)/c_1)^\tau\Big(1+\frac{\log(1+\eps)}{\log(b_n)}\Big)^\tau\Big).
\ea\ee 
We now use the expression of $b_n$ as in the~\ref{ass:weightgumbel}-\ref{ass:weighttau0} case in Assumption~\ref{ass:weights} to obtain the upper bound
\be\ba
\wt C \log(b_n)^b \exp\Big(\log n\Big(1-\Big(1+\frac{(b/\tau)\log \log n+b\log c_1+\log \tau}{\tau \log n}\Big)^\tau\Big(1+\frac{\log(1+\eps)}{\log(b_n)}\Big)^\tau\Big)\Big),
\ea \ee 
where $\wt C>0$ is a suitable constant. Using a Taylor approximation on the terms in the exponent and using the asymptotics of $\log (b_n)$, we find an upper bound 
\be \label{eq:probub}
K_1(\log n)^{b/\tau}\exp(-K_2(\log n)^{1-1/\tau}),
\ee 
for some constants $K_1,K_2>0$ and $n$ sufficiently large. Note that this expression tends to zero as $\tau>1$. Now, we aim to apply this bound to the maximum in~\eqref{eq:finub}. First, a union bound yields
\be \ba 
\mathbb{P}\Big(\max\Big\{\max_{k\in[N]}A_n(k-1),A_n\Big\}\geq 1+\eps\Big)& \leq \sum_{k=0}^{N-1}\!\mathbb{P}\Big(\max_{i\in[t^{r_k}_nn]}\frac{\F_i}{b_{t^{r_k}_nn}}\geq 1+\eps\Big)\!\\ & \quad\hspace{2cm} +\mathbb{P}\Big(\!\max_{\inn}\frac{\F_i}{b_n}\geq 1+\eps\Big).
\ea\ee 
The last term tends to zero with $n$. For the sum we use~\eqref{eq:probub} and note that this upper bound tends to zero slowest for $k=N-1$, so that we obtain the upper bound 
\be \ba 
\sum_{k=0}^{N-1}\mathbb{P}\Big(\max_{i\in[t^{r_k}_nn]}\F_i/b_{t^{r_k}_nn}\geq 1+\eps\Big)&\leq NK_1\log(t^{r_{N-1}}_nn)^{b/\tau}\exp(-K_2\log(t^{r_{N-1}}_nn)^{1-1/\tau})  \\
&\leq K_3 \log\log(b_n)(\log n)^{b/\tau}\exp(-K_4 (\log n)^{1-1/\tau}),
\ea\ee 
for some constants $K_3,K_4$, since $r_{N-1}=\mathcal{O}(\log\log (b_n))$, which again tends to zero with $n$ as $\tau>1$.

Finally, we prove the convergence of $\log(\wt I_n)/\log n$. Let $\eta\in(0,1)$. Then, the event
\be 
E_n:=\bigg\{\max_{\inn}\frac{\Ef{}{\zni}}{ b_{t_nn} \log(1/t_n)}\geq \eta\bigg\}
\ee 
holds with high probability by the above. Using this and~\eqref{eq:conc} yields, for $\eps>0$, 
\be \ba
\mathbb P\Big(\frac{\log \wt I_n}{\log n}<1-\eps\Big)&\leq\mathbb P\Big(\Big\{\frac{\log \wt I_n}{\log n}<1-\eps\Big\}\cap E_n\Big)+\P{E_n^c}\\ 
&\leq \mathbb P\Big(\max_{i< n^{1-\eps}}\frac{\F_i\log (n/i)}{b_{t_nn} \log(1/t_n)}\geq \eta\Big)+o(1).
\ea \ee 
The probability on the right-hand side can be bounded from above by 
\be \label{eq:intau0prob}
\P{\max_{i\leq n^{1-\eps}}\frac{\F_i}{b_{n^{1-\eps}}}\frac{b_{n^{1-\eps}}\log(b_n)}{b_{t_nn}}\geq \tau\eta}.
\ee 
Now,
\be\ba 
\frac{b_{n^{1-\eps}}\log(b_n)}{b_{t_nn}}\sim\exp\Big(1+(\log n)^{1/\tau} \ell(\log n)\Big((1-\eps)^{1/\tau} \frac{\ell((1-\eps)\log n)}{\ell(\log n)}-1\Big)\Big)\log(b_n),
\ea\ee 
which, since $\ell$ is a slowly-varying function at infinity and $(1-\eps)^{1/\tau}<1$, tends to zero with $n$. As the maximum in~\eqref{eq:intau0prob} tends to $1$ in probability, we obtain that the probability in~\eqref{eq:intau0prob} tends to zero with $n$.

To prove~\eqref{eq:rav2ndmean} we use a similar argument as in the proof of Proposition~\ref{prop:condmeantaufin}, as distributions satisfying the~\ref{ass:weightgumbel}-\ref{ass:weighttau0} sub-case also fall in the Gumbel MDA. Namely, we can write
\be \ba 
\frac{ \F_i\sum_{j=i}^{n-1}1/S_j-b_{t_nn}\log(1/t_n)}{a_{t_nn}\log(1/t_n)}={}&\frac{\F_i-b_{t_nn}}{a_{t_nn}}\frac{\sum_{j=i}^{n-1}1/S_j}{\log(1/t_n)}-\log\Big(\frac{i}{t_nn}\Big)\\
&+\frac{b_{t_nn}}{a_{t_nn}\log(1/t_n)}\Big(\sum_{j=i}^{n-1}1/S_j-\log(n/i)\Big)\\
&-\Big(\frac{b_{t_nn}}{a_{t_nn}\log(1/t_n)}-1\Big)\log\Big(\frac{i}{t_nn}\Big),
\ea\ee 
so that 
\be \ba \label{eq:ravdiff}
&\bigg|\max_{i\in C_n}{}\frac{ \F_i\sum_{j=i}^{n-1}1/S_j-b_{t_nn}\log(1/t_n)}{a_{t_nn}\log(1/t_n)}-\max_{i\in C_n}\Big(\frac{\F_i-b_{t_nn}}{a_{t_nn}}\frac{\sum_{j=i}^{n-1}1/S_j}{\log(1/t_n)}-\log\Big(\frac{i}{t_nn}\Big)\Big)\bigg|\\
&\quad\leq \frac{b_{t_nn}}{a_{t_nn}\log(1/t_n)}\max_{i\in C_n}\bigg|\sum_{j=i}^{n-1}1/S_j-\log(n/i)\bigg|+\Big|\frac{b_{t_nn}}{a_{t_nn}\log(1/t_n)}-1\Big|\max_{i\in C_n}  \Big| \log\Big(\frac{i}{t_nn}\Big)  \Big|,
\ea \ee 
where we omit the arguments of $C_n(1,s,t,t_n)$ for brevity. We now note that, in the~\ref{ass:weighttau0} sub-case, $a_n:=c_2 (\log n)^{1/\tau-1}b_n$, which yields
\be \ba
\frac{b_{t_nn}}{a_{t_nn}\log(1/t_n)}& =\Big(c_2\log(t_nn)^{1/\tau-1}\log(1/t_n)\Big)^{-1}\\ & \sim \Big(c_2(\log n)^{1/\tau-1}\frac{\tau \log n}{\log(b_n)}\Big)^{-1}\sim \Big(\frac{c_1}{\ell(\log n)}\Big)^{-1},
\ea \ee 
which converges to $1$ as $n$ tends to infinity. Here, we use that $c_1=c_2\tau$, and that $\log(b_n)=(\log n)^{1/\tau}\ell(\log n)$, with $\lim_{x\to\infty}\ell(x)=c_1$. It follows, with a similar argument as in the proof of Proposition~\ref{prop:condmeantaufin}, that the right-hand side of~\eqref{eq:ravdiff} converges to zero almost surely. Now, the rest of the proof of~\eqref{eq:rav2ndmean} follows the exact same approach as the proof of Proposition~\ref{prop:condmeantaufin2ndpart}.
\end{proof}

{\cb 
\begin{proposition}[Max expected degree, second order, \ref{ass:weightgumbel}-\ref{ass:weighttau0}]\label{prop:condmeantau02nd}
	Consider the same conditions as in Proposition~\ref{prop:condmeantau0}. When $\tau\in(1,3]$,
	\be \label{eq:rav2nd}
	\max_{\inn }\frac{\Ef{}{\zni}-mb_{t_nn}\log(1/t_n)}{ma_{t_nn}\log(1/t_n)\log\log n}\toinp \frac 12 \Big(1-\frac 1\tau\Big),
	\ee 
	whilst for $\tau>3$,
	\be \label{eq:rav2nd3}
	\max_{\inn }\frac{\Ef{}{\zni}-mb_{t_nn}\log(1/t_n)}{ma_{t_nn}\log(1/t_n)(\log n)^{1-3/\tau}}\toinp -\frac{\tau(\tau-1)^2}{2c_1^3}.
	\ee 
\end{proposition}

}

\begin{proof}  
With a similar argument as in the proof of Proposition~\ref{prop:condmeantau0},it suffices to prove the result for $\max_{\inn}W_i\log(n/i)$ instead of $\max_{\inn}\Ef{}{\zni}$. The general approach is similar to the proof of~\eqref{eq:taufin2} in Proposition~\ref{prop:condmeantaufin}, though the details differ. We first consider the case $\tau\in(1,3]$ and then tend to the case $\tau>3$. In both cases, we prove a lower and upper bound. Moreover, for the lower bound we need only consider indices $\e^{k_n}t_nn\leq i\leq t_nn$ for a particular choice of $k_n$.

Fix $\tau\in(1,3], x\in\R$ and let $k_n:=\sqrt{(\tau-1)/c_1}\sqrt{(\log n)^{1-1/\tau}\log\log n}$. We bound  
\be \ba \label{eq:maxravbound}
\mathbb{P}{}&\bigg(\max_{t_nn\leq i\leq \e^{k_n}t_nn}\frac{W_i\log(n/i)-b_{t_nn}\log (1/t_n)}{a_{t_nn}\log (1/t_n)\log\log n}\leq x\bigg)\\
&=\prod_{i=t_nn}^{\e^{k_n}t_nn}\bigg(1-\P{W\geq\frac{\log(1/t_n)}{\log(n/i)}(b_{t_nn}+a_{t_nn}x\log \log n)}\bigg)\\
&\leq \exp\bigg(-\sum_{i=t_nn}^{ \e^{k_n}t_nn}\P{W\geq \frac{\log(1/t_n)}{\log(n/i)}(b_{t_nn}+a_{t_nn}\log\log n)}\bigg)\\
&\leq \exp\bigg(-\sum_{j=1}^{ k_n}\sum_{i=\e^{j-1}t_nn}^{\e^j t_nn}\P{W\geq \frac{\log(1/t_n)}{\log(1/t_n)-j}(b_{t_nn}+a_{t_nn}x\log \log n )}\bigg).
\ea \ee
We then obtain an upper bound by bounding the probability from below. So, for $\delta>0$ small and $n$ large, we arrive at the upper bound
\be \ba\label{eq:doubleexprav}
\exp\bigg(-{}&(1-\delta)a(\e-1)\sum_{j=1}^{k_n}\e^{j-1}t_nn\Big( \log\Big(\frac{\log(1/t_n)}{\log(1/t_n)-j}(b_{t_nn}+a_{t_nn}x\log \log n )\Big)\Big)^b\\
&\times \exp\Big(-\Big(\log\Big(\frac{\log(1/t_n)}{\log(1/t_n)-j}(b_{t_nn}+a_{t_nn}x\log\log n)\Big)/c_1\Big)^\tau\Big)\bigg).
\ea \ee 
As $\log(1/t_n)/(\log(1/t_n)-j)=1+o(1)$ uniformly in $j$, we can write the inner exponent as
\begin{align}
	-\Big(\log{}&\Big(\frac{\log(1/t_n)}{\log(1/t_n)-j}(b_{t_nn}+a_{t_nn}x\log\log n )\Big)/c_1\Big)^\tau\\
	={}&- \Big(-\log\Big(1-\frac{j}{\log(1/t_n)}\Big)/c_1+\log(b_{t_nn}+a_{t_nn}x\log\log n )/c_1\Big)^\tau\\
	={}&-\bigg(\sum_{\ell=1}^\infty \frac{1}{\ell c_1}\Big(\frac{j}{\log(1/t_n)}\Big)^\ell +\log(t_nn)^{1/\tau}+\frac1\tau \log(t_nn)^{1/\tau-1}\log\big(a(c_1(\log t_nn)^{1/\tau})^b\big)\\
	&+\sum_{\ell=1}^\infty (-1)^{\ell-1}\frac{1}{\ell c_1}\big(c_2\log(t_nn)^{1/\tau-1}x\log\log n\big)^\ell \bigg)^\tau\\
	={}&-\log(t_nn)\bigg(1+\sum_{\ell=1}^\infty \frac{1}{\ell c_1}\Big(\frac{j}{\log(1/t_n)}\Big)^\ell\log(t_nn)^{-1/\tau} \\ 
	&\hphantom{-\log(t_nn)\bigg(}+\frac1\tau \log(t_nn)^{-1}\log\big(a(c_1(\log t_nn)^{1/\tau})^b\big)\\
	&\hphantom{-\log(t_nn)\bigg(}+\sum_{\ell=1}^\infty (-1)^{\ell-1}\frac{1}{\ell c_1}\big(c_2\log(t_nn)^{1/\tau-1}x\log\log n\big)^\ell\log(t_nn)^{-1/\tau} \bigg)^\tau\\
	={}&-\log(t_nn)-\log\big(a(c_1(\log t_nn)^{1/\tau})^b\big)-x\log\log n-\frac{\log(t_nn)^{1-1/\tau}}{c_2\log(1/t_n)}j\\
	&-\frac{\log(t_nn)^{1-1/\tau}}{2c_2\log(1/t_n)^2}j^2(1+o(1))+o(1). \label{eq:2204-1}
\end{align}
Now, for $\ell\in\{1,2\}$,
\be\ba
\frac{1}{\ell c_2}\Big(\frac{j}{\log(1/t_n)}\Big)^\ell & \log(t_nn)^{1-1/\tau}\\ 
={}&\frac{j^\ell}{\ell c_2}c_2^\ell (\log n)^{-(\ell-1)(1-1/\tau)}\Big(1+\frac{\log t_n}{\log n}\Big)^{1-1/\tau}\Big(\frac{\log(b_n)}{c_1(\log n)^{1/\tau}}\Big)^\ell \\
={}&\frac{j^\ell}{\ell c_2}c_2^\ell (\log n)^{-(\ell-1)(1-1/\tau)}\Big(1-\frac{\tau-1}{\log(b_n)}(1+\mathcal O((\log n)^{-1/\tau}))\Big)\\
&\times\Big(1 + \frac 1\tau (\log n)^{-1}\log\big(a(c_1(\log n)^{1/\tau})^b\big)\Big)^\ell,
\ea \ee      
where the last two terms are both $(1+\mathcal O((\log n)^{-1/\tau}))$. So, the right-hand side of~\eqref{eq:2204-1} equals
\be \ba\label{eq:o1}
-\log(t_nn){}&-\log\big(a(c_1(\log t_nn)^{1/\tau})^b\big)-x\log\log n-\frac{j^2}{2 }c_2(\log n)^{-(1-1/\tau)}\\
&-j\Big(1+\frac 1\tau(\log n)^{-1}\log\big(a(c_1(\log n)^{1/\tau})^b\big) -\frac{\tau-1}{\log(b_n)}\Big)+ o(1),
\ea \ee
uniformly in $j\in[k_n]$, where we note that the final $o(1)$ term vanishes uniformly in $j$ since $\tau\in(1,3]$. Using this in~\eqref{eq:doubleexprav}, we arrive at 
\be \ba \label{eq:rav3ub}
\mathbb P{}&\bigg(\max_{t_nn\leq i\leq \e^{k_n}t_nn}\frac{W_i\log(n/i)-b_{t_nn}\log(1/t_n)}{a_{t_nn}\log(1/t_n)\log\log n}\leq x\bigg)\\
&\leq  \exp\Big(-(1-\delta)(1-1/\e)\sum_{j=1}^{k_n}(\log n)^{-x}(1+o(1))\exp\big(\e_n j-d_n j^2\big)\Big),
\ea \ee 
where 
\be 
\e_n:=\frac{\tau-1}{\log(b_n)}-\frac 1\tau(\log n)^{-1}\log\big(a(c_1(\log n)^{1/\tau})^b\big),\qquad d_n:= \frac{c_2}{2(\log n)^{1-1/\tau}}.
\ee 
The expression $\e_n j-d_nj^2$ is increasing for $j\leq \e_n/(2d_n)=o(k_n)$, so we bound the sum from below by 
\be \ba  
\sum_{j=1}^{k_n}\e^{\e_n j-d_nj^2}&\geq \exp\big(\e^2_n/(4d_n)\big)\int_0^{k_n}\exp\Big(-d_n\Big(y-\frac{\e_n}{2d_n}\Big)^2\Big)\,\d y\\
&=\exp\big(\e^2_n/(4d_n)\big)\sigma_n\sqrt{2\pi}\int_0^{k_n}\frac{1}{\sigma_n\sqrt{2\pi}}\exp\Big(-\frac 12 \Big(\frac{y-\mu_n}{\sigma_n}\Big)^2\Big)\,\d y\\
&=\exp\big(\e^2_n/(4d_n)\big)\sigma_n\sqrt{2\pi}\P{X_n\in(0,k_n)},
\ea \ee 
where $\mu_n:=\e_n/(2d_n), \sigma_n:=1/\sqrt{2d_n}$, and $X_n\sim \mathcal N(\mu_n,\sigma_n^2)$ is a normal random variable. If we let $Z\sim \mathcal N(0,1)$ be a standard normal, we can write the last line as 
\be \label{eq:normalprob}
\exp\big(\e^2_n/(4d_n)\big)\sigma_n\sqrt{2\pi}\P{Z\in\Big(-\frac{\mu_n}{\sigma_n},\frac{k_n-\mu_n}{\sigma_n}\Big)}.
\ee 
It is clear that for $\tau \leq3$,
\be \ba\label{eq:seqexpr}
\frac{\e_n^2}{4d_n}&=\frac{\tau(\tau-1)^2}{2c_1^3}(\log n)^{1-3/\tau}(1+o(1)),\quad \frac{k_n-\mu_n}{\sigma_n}=\sqrt{(1-1/\tau)\log\log n},\\
\frac{\mu_n}{\sigma_n}&=\sqrt{\frac{\tau(\tau-1)^2}{c_1^3}}(\log n)^{(1-3/\tau)/2}(1+o(1)).
\ea \ee     
It thus follows that, when $\tau\in(1,3]$, the probability as well as the exponential term in~\eqref{eq:normalprob} converge to a constant. So, for some $K>0$, we bound the expression in~\eqref{eq:normalprob} from above by $K(\log n)^{(1-1/\tau)/2}$. Using this in~\eqref{eq:rav3ub} finally yields the lower bound
\be 
\exp\Big(-(1-\delta)(1-1/\e)(\log n)^{(1-1/\tau)/2-x}(1+o(1))\Big),
\ee 
which converges to zero for any $x<(1-1/\tau)/2$. We thus arrive at, with high probability,
\be 
\max_{\inn}\max_{t_nn\leq i\leq \e^{k_n}t_nn}\frac{W_i\log(n/i)-b_{t_nn}\log (1/t_n)}{a_{t_nn}\log (1/t_n)\log\log n}\geq \frac12 \Big(1-\frac 1\tau\Big)+\eta,
\ee 
for any $\eta>0$. 

To prove a matching upper bound, we split the set $[n]$ into four parts: the indices $1\leq i\leq \e^{-k_n}t_nn, \e^{-k_n}t_nn\leq i\leq t_nn, t_nn\leq i\leq \e^{k_n}t_nn$ and $\e^{k_n}t_nn\leq i\leq n$. We prove an upper bound for all four ranges of indices, a union bound then concludes the proof. The proof for the first two ranges of indices is analogous to the proof for the latter two, so we focus on the latter two. Let us start with the range $t_nn\leq i\leq \e^{k_n}t_nn$. We can use the same approach as above, though with minor adaptations. First, using that $1-y\geq \exp(-(1+\delta)y)$ for $y$ sufficiently small and $\delta>0$ fixed, we obtain 
\be \ba
\mathbb{P}{}&\bigg(\max_{t_nn\leq i\leq \e^{k_n}t_nn}\frac{W_i\log(n/i)-b_{t_nn}\log (1/t_n)}{a_{t_nn}\log (1/t_n)\log\log n}\leq x\bigg)\\
&\geq \exp\bigg(-(1+\delta)\sum_{j=1}^{ k_n}\sum_{i=\e^{j-1}t_nn}^{\e^j t_nn}\P{W\geq \frac{\log(1/t_n)}{\log(1/t_n)-(j-1)}(b_{t_nn}+a_{t_nn}x\log \log n )}\bigg).
\ea \ee
We then bound the probability from above and use the same Taylor expansions to obtain a lower bound of the same form as~\eqref{eq:doubleexprav}. We bound the sum over $j$ from above by 
\be\ba 
\sum_{j=1}^{k_n}\e^{\e_n(j-1)-d_n(j-1)^2}\leq\exp\big(\e^2_n/(4d_n)\big)\sigma_n\sqrt{2\pi}\P{X_n\in(0,k_n)},
\ea \ee 
where $d_n,\e_n,\mu_n,\sigma_n$ and $X_n$ are as above. With a similar argument, we obtain an upper bound 
\be 
\wt K(\log n)^{(1-1/\tau)/2},
\ee 
for some appropriate constant $\wt K>0$. We thus obtain 
\be 
\mathbb{P}{}\bigg(\max_{t_nn\leq i\leq \e^{k_n}t_nn}\frac{W_i\log(n/i)-b_{t_nn}\log (1/t_n)}{a_{t_nn}\log (1/t_n)\log\log n}\leq x\bigg)\geq \e^{(-(1+\delta)^2(\e-1)(\log n)^{(1-1/\tau)/2-x})},
\ee 
which converges to one for any $x>(1-1/\tau)/2$. 

To prove an upper bound for the range $\e^{k_n}t_nn\leq i\leq n$, a slight adaptation is required. We write,
\be \ba 
\mathbb{P}{}&\bigg(\max_{\e^{k_n}t_nn\leq i\leq n}\frac{W_i\log(n/i)-b_{t_nn}\log (1/t_n)}{a_{t_nn}\log (1/t_n)\log\log n}\leq x\bigg)\\
&\geq \exp\bigg(-(1+\delta)\sum_{j=1}^{\log(1/t_n)-k_n}\!\!\!\sum_{i=\e^{j-1+k_n}t_nn}^{\e^{j+k_n} t_nn}\!\!\!\!\P{W\geq \frac{\log(1/t_n)(b_{t_nn}+a_{t_nn}x\log\log n )}{\log(1/t_n)-(j-1+k_n)}}\bigg),
\ea \ee 
and bound the probability from above by 
\be \ba 
(1+\delta){}&a \Big( \log\Big(\frac{\log(1/t_n)}{\log(1/t_n)-(j-1+k_n)}(b_{t_nn}+a_{t_nn}x\log\log n )\Big)  \Big)^b\\
&\times \exp\Big(-\Big(\log\Big(\frac{\log(1/t_n)}{\log(1/t_n)-(j-1+k_n)}(b_{t_nn}+a_{t_nn}x\log\log n )\Big)/c_1\Big)^\tau\Big).
\ea \ee 
Since the fraction $\log(1/t_n)/(\log(1/t_n)-(j-1+k_n))$ is no longer $1+o(1)$ uniformly in $j$, we treat this term somewhat differently. We write the exponent as
\be \ba 
- (\log ( & b_{t_n n}) / c_1)^\tau   - x \log \log n (1+ o(1)) \\ &+\frac{1}{c_2}\log(t_nn)^{1-1/\tau}\log\Big(1-\frac{j-1+k_n}{\log(1/t_n)}\Big)\Big(1+\mathcal O\Big(\frac{\log\log n}{\log n}\Big)\Big).
\ea \ee
Then using that as before $t_n n a (\log (b_{t_nn}))^b \exp\big(- (b_{t_nn}/c_1)^\tau\big) \sim 1$, this yields a lower bound 
\be \ba \label{eq:lbinexp}
\exp\Big({}&-(1+\delta)^2(\e-1)\sum_{j=1}^{\log(1/t_n)-k_n}(\log n)^{-x(1+o(1))}(1+o(1))\\
\times{}& \exp\Big(j-1+k_n+\frac{1}{c_2}\log(t_nn)^{1-1/\tau}\log\Big(1-\frac{j-1+k_n}{\log(1/t_n)}\Big)\Big(1+\mathcal O\Big(\frac{\log\log n}{\log n}\Big)\Big)\Big)\Big).
\ea\ee 
The mapping $x\mapsto x+f_n\log(1-x/g_n)$, $x < g_n$, for some sequences $f_n,g_n$, is maximised at $x=g_n-f_n$. In this case, with $f_n:=c_2^{-1}\log(t_nn)^{1-1/\tau}$ and $ g_n=\log(1/t_n)$, it follows that $g_n-f_n=\tau(\tau-1)c_1^{-2}(\log n)^{1-2/\tau}(1+o(1))$. Since $(\log n)^{1-2/\tau}=o(k_n)$ when $\tau\in(1,3]$, as $(1-1/\tau)/2\geq 1-2/\tau$ for $\tau\in(1,3]$, it follows that the inner exponent is largest when $j=1$. This yields the lower bound
\be \ba\label{eq:finbound}
\exp\Big({}&-(1-\delta)^2(\e-1)\log(1/t_n)(\log n)^{-x(1+o(1))}(1+o(1))\\
\times{}& \exp\Big(\Big(-K_1\frac{k_n}{\tau\log n}\log\log n+\frac{(\tau-1)k_n}{\log b_n}-\frac{1}{2}\frac{k_n^2}{\log(1/t_n)}\Big)\Big(1+\mathcal O\Big(\frac{\log\log n}{\log n}\Big)\Big)\Big)\Big),
\ea\ee 
for some small constant $K_1>0$. The first two terms in the inner exponent are negligible (compared to the last term) by the choice of $k_n$. The last term equals $-((1-1/\tau)/2)\log\log n(1+o(1))$, so that we finally obtain the lower bound 
\be 
\exp\big(-K_2(\log n)^{(1-1/\tau)/2-x(1+o(1))}\big),
\ee 
for some sufficiently large $K_2>0$. It thus follows that for any $x>(1-1/\tau)/2$ the lower bound converges to one. Together with the result for the range of indices $t_nn\leq i\leq \e^{k_n}t_nn$ (and a similar result for $1\leq i\leq t_nn$, with analogous proofs), the upper bound then follows, and finishes the proof for the case $\tau\in(1,3]$. 

When $\tau>3$, we set $k_n:=(\tau(\tau-1)/c_1^2)(\log n)^{1-2/\tau}$. For a lower bound on the maximum, we again consider the indices $t_nn\leq i\leq \e^{k_n}t_nn$. The steps in~\eqref{eq:maxravbound}-~\eqref{eq:normalprob} are still valid when replacing $\log\log n$ with $(\log n)^{1-3/\tau}$. The only minor differences are that $\e_n/d_n\sim k_n$ rather than $o(k_n)$ and that the $o(1)$ term in~\eqref{eq:o1} needs to be replaced by $\mathcal O\big(\sqrt{\log\log n(\log n)^{1-5/\tau}}\big)$, though this changes nothing for the rest of the argument. The second quantity in~\eqref{eq:seqexpr} does change, however. We now find that $(k_n-\mu_n)/\sigma_n=o(1)$. As a result, the probability in~\eqref{eq:normalprob} still converges to a constant, but now the exponential term diverges. We thus obtain the lower bound 
\be \ba
\mathbb{P}{}&\bigg(\max_{t_nn\leq i\leq \e^{k_n}t_nn}\frac{W_i\log(n/i)-b_{t_nn}\log (1/t_n)}{a_{t_nn}\log (1/t_n)(\log n)^{1 - 3/\tau}}\leq x\bigg)\\
& \leq     
\exp\Big(-(1-\delta)(1-1/\e)(\log n)^{(1-1/\tau)/2}(1+o(1))\\
&\times \exp\Big(\Big(\frac{\tau(\tau-1)^2}{2c_1^3}+x\Big)(\log n)^{1-3/\tau}(1+o(1))+\mathcal O\Big(\sqrt{\log\log n(\log n)^{1-5/\tau}}\Big)\Big)\Big),
\ea \ee 
which converges to zero for any $x>-\tau(\tau-1)^2/(2c_1^3)$. 

To prove an upper bound, we adjust the arguments for the $\tau\in(1,3]$ case. Again, we substitute $(\log n)^{1-3/\tau}$ for $\log\log n$. The lower bound on the probability for indices $t_nn\leq i\leq \e^{k_n}t_nn$ remains valid, which yields lower bound that converges to zero for any $x<-\tau(\tau-1)^2/(2c_1^3)$. 

For the range $\e^{k_n}t_nn\leq i\leq n$, we find that the expression in~\eqref{eq:lbinexp} still holds (again when substituting $(\log n)^{1-3/\tau}$ for $\log\log n$). However, we improve on the accuracy of~\eqref{eq:lbinexp} by including more terms of the Taylor expansion. This yields, for an appropriate constant $K_3>0$, 
\be \ba 
\exp\Big({}&-K_3\sum_{j=1}^{\log(1/t_n)-k_n}\exp\Big(j-1+k_n+\frac{1}{c_2}\log(t_nn)^{1-1/\tau}\log\Big(1-\frac{j-1+k_n}{\log(1/t_n)}\Big)\\
&+\frac{\tau(\tau-1)}{2c_1^2}\log(t_nn)^{1-2/\tau} \Big(\log\Big(1-\frac{j-1+k_n}{\log(1/t_n)}\Big) \Big)^2(1+o(1))+x(\log n)^{1-3/\tau}\Big)\Big).
\ea \ee 
As is the case when $\tau\in(1,3]$, the inner exponent is largest when $j=1$, yielding the lower bound
\be \ba
\exp\Big({}&-K_3\log(1/t_n)\exp\Big(k_n+\frac{1}{c_2}\log(t_nn)^{1-1/\tau}\log\Big(1-\frac{k_n}{\log(1/t_n)}\Big)(1+o(1))\\
&+x(\log n)^{1-3/\tau}+O\big((\log n)^{1-4/\tau}\big)\Big)\Big).
\ea\ee 
Now, applying a Taylor expansion to the logarithmic term, we can write the inner exponent as 
\be \ba 
k_n{}&-\frac{\log(t_nn)^{1-1/\tau}}{c_2\log(1/t_n)}k_n-\frac{\log(t_nn)^{1-1/\tau}}{2c_2\log(1/t_n)^2}k_n^2+x(\log n)^{1-3/\tau}+O\big((\log n)^{1-4/\tau}\big)\\
&=\frac{\tau(\tau-1)^2}{2c_1^3}(\log n)^{1-3/\tau}+x(\log n)^{1-3/\tau}+O\big((\log n)^{1-4/\tau}\big). 
\ea \ee 
Concluding, we obtain the lower bound
\be 
\exp\Big(-K_3\log(1/t_n)\exp\Big(\frac{\tau(\tau-1)^2}{2c_1^3}(\log n)^{1-3/\tau}+x(\log n)^{1-3/\tau}+O\big((\log n)^{1-4/\tau}\big)\Big)\Big),
\ee 
so that we obtain a limit of one when choosing any $x<-\tau(\tau-1)^2/(2c_1^3)$, which concludes the proof.
\end{proof}

\subsection{Maximum conditional mean degree, \ref{ass:weightfrechet} case}

{\cb In this final sub-section we study the maximum conditional mean degree when the vertex-weights satisfy the~\ref{ass:weightfrechet} case of Assumption~\ref{ass:weights}. }

\begin{proposition}\label{prop:condmeanfrechet}
Consider the WRG model as in Definition~\ref{def:wrg} and suppose the vertex-weights satisfy the~\ref{ass:weightfrechet} case in Assumption~\ref{ass:weights}. Let $\Pi$ be a PPP on $(0,1)\times (0,\infty)$ with intensity measure $\nu(\d t,\d x):=\d t \times (\alpha-1)x^{-\alpha}\d x,\ x>0$. When $\alpha>2$,
\be\label{eq:wrrtcondmeanconv}
\max_{i\in[n]}\Ef{}{\zni/u_n}\toindis m\max_{(t,f)\in\Pi}f\log(1/t),
\ee
and when $\alpha\in (1,2)$,
\be\label{eq:wrrtinfcondmeanconv}
\max_{i\in[n]}\Ef{}{\zni/n}\toindis m\max_{(t,f)\in\Pi} f \int_t^1\bigg(\int_{(0,1)\times(0,\infty)} g\ind_{\{u\leq s\}}\,\d \Pi(u,g)\bigg)^{-1}\,\d s.
\ee
\end{proposition}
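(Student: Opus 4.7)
My approach rests on the weak convergence of the marked point process $\Pi_n := \sum_{i=1}^n \delta_{(i/n,\,W_i/u_n)}$ to the Fréchet PPP $\Pi$ on $(0,1)\times(0,\infty)$ with intensity $\d t\times (\alpha-1)x^{-\alpha}\d x$, combined with the exact identity $\Ef{}{\zni}=mW_i\sum_{j=i}^{n-1}1/S_j$. In both cases the strategy is to express the rescaled conditional mean degree as a continuous functional of $\Pi_n$ (up to vanishing errors) and then apply the continuous mapping theorem together with a truncation argument, closely mirroring the approach developed for Propositions~\ref{prop:condmeantaufin} and~\ref{prop:condmeantau0}.

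For $\alpha>2$, I may assume $\E{W}=1$, so that the lemma leading to~\eqref{eq:sumfitnessconv} yields $\sum_{j=i}^{n-1}1/S_j=\log(n/i)+(Y_n-Y_i)$ with $Y_n\toas Y$ a.s.\ finite and $\sup_{j\in\N}|Y_j|<\infty$ a.s. Writing $\Ef{}{\zni}/u_n=m(W_i/u_n)\log(n/i)+m(W_i/u_n)(Y_n-Y_i)$, I would show the error term is negligible by splitting at index $K$: for $i<K$ it is bounded by $\max_{i<K}(W_i/u_n)\cdot 2\sup_j|Y_j|$, which vanishes almost surely since $u_n\to\infty$ and $K$ is fixed; for $i\geq K$ it is at most $\sup_{j\geq K}|Y_n-Y_j|\cdot\max_{i\in[n]}(W_i/u_n)=o_{\mathbb P}(1)$ by sending $n\to\infty$ then $K\to\infty$. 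For the main term, PPP convergence gives $\max_{i:i/n\geq\delta}(W_i/u_n)\log(n/i)\toindis\max_{(t,f)\in\Pi,\,t\geq\delta}f\log(1/t)$, while the indices $i<\delta n$ are controlled by a union bound and the tail assumption on $W$, yielding $\mathbb{P}(\max_{i<\delta n}(W_i/u_n)\log(n/i)>M)\lesssim M^{-(\alpha-1)}\int_0^\delta(\log(1/t))^{\alpha-1}\,\d t$, which vanishes as $\delta\downarrow 0$ thanks to the integrability $\int_0^1(\log(1/t))^{\alpha-1}\,\d t=\Gamma(\alpha)<\infty$ (the same feature that makes the Fréchet maximum a.s.\ finite).

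For $\alpha\in(1,2)$, $\E{W}=\infty$, so $S_j$ is no longer comparable to $j$ but to the partial-sum process $T_n(s):=S_{\lfloor sn\rfloor}/u_n$, which is itself a continuous functional of $\Pi_n$ and converges jointly with $\Pi_n$ to the $(\alpha-1)$-stable subordinator $T(s):=\int_{(0,1)\times(0,\infty)} g\ind_{\{u\leq s\}}\,\d\Pi(u,g)$, a.s.\ finite on $(0,1]$. A Riemann-sum approximation then yields $\Ef{}{\zni}/n=m(W_i/u_n)\int_{i/n}^1 T_n(s)^{-1}\,\d s + o_{\mathbb P}(1)$. Truncating to $i/n\geq\epsilon$ and $W_i/u_n\geq\delta$ reduces the maximum to one over finitely many points, and continuous mapping identifies the limit as $m\max_{(t,f)\in\Pi,\,t\geq\epsilon,\,f\geq\delta}f\int_t^1 T(s)^{-1}\,\d s$; letting $\epsilon,\delta\downarrow 0$ and controlling the residual by tail estimates analogous to the finite-mean case produces~\eqref{eq:wrrtinfcondmeanconv}.

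The main obstacle is the continuous mapping step in the infinite-mean case: the limit functional involves the subordinator $T$, whose jumps coincide exactly with the points of $\Pi$, so the map $(\Pi,T)\mapsto \max_{(t,f)\in\Pi} f\int_t^1 T(s)^{-1}\,\d s$ is not obviously continuous at the limiting realization. One must verify a.s.\ uniqueness of the maximizer $(t^*,f^*)$ and argue (via Skorokhod representation, together with the fact that jumps of $T$ and points of $\Pi$ are coupled in the limit) that small perturbations produce small changes in the integral $\int_{t^*}^1 T(s)^{-1}\,\d s$, using in particular that $T(s)>0$ for $s>0$ a.s. so the integrand stays bounded away from $\infty$. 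As noted in the overview after Theorem~\ref{Thrm:maxwrrt}, this functional is precisely the one appearing in the strong/extreme disorder regime of the PAF model in~\cite{LodOrt20}, and the corresponding continuity arguments there should transfer to the present setting.
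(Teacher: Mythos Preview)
Your proposal is correct and follows essentially the same approach as the paper: for $\alpha>2$ the paper also reduces to $m\max_i(W_i/u_n)\log(n/i)$ (invoking the argument behind~\eqref{eq:conc} for the error term) and then applies point-process convergence with continuous mapping, while for $\alpha\in(1,2)$ it simply cites~\cite[Proposition~5.1]{LodOrt20}, where the subordinator representation and continuity argument you sketch are carried out. Your explicit truncation-and-tail-bound treatment of the functional $f\log(1/t)$ is in fact more careful than the paper's one-line continuous mapping claim.
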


\begin{proof}
First, let $\alpha>2$. We first claim that
\be \label{eq:wrrtmaxdiff}
\bigg|\max_{i\in[n]} \Ef{}{\zni/u_n} -m\max_{i\in[n]}\frac{\F_i\log(n/i)}{u_n}\bigg|\toinp 0.
\ee
The claim's proof follows a similar structure as that of~\eqref{eq:conc}. Let us define the point process 
\be \label{eq:pin}
\Pi_n:=\sum_{i=1}^n \delta_{(i/n,\F_i/u_n)}.
\ee 
By~\cite{Res13}, when the $\F_i$ are i.i.d.\ random variables in the Fr\'echet maximum domain of attraction with parameter $\alpha-1$, then $\Pi$ is the weak limit of $\Pi_n$. Since $\F_i\log(n/i)/u_n$ is a continuous mapping of $(i/n,\F_i/u_n)$ and since taking the maximum is a continuous mapping too, it follows that
\be
\max_{i\in[n]}\frac{\F_i\log(n/i)}{u_n}\toindis \max_{(t,f)\in\Pi} f\log(1/t),
\ee 
which, together with~\eqref{eq:wrrtmaxdiff}, yields the desired result. We now consider $\alpha\in(1,2)$. Note that 
\be 
\max_{i\in[n]}\Ef{}{\zni/n}=m\max_{i\in[n]}\frac{\F_i}{n}\sum_{j=i}^{n-1}\frac{1}{S_j}.
\ee 
The distributional convergence of the maximum on the right-hand side to the desired limit is proved in~\cite[Proposition 5.1]{LodOrt20}, which concludes the proof.
\end{proof}

\section{Concentration of the maximum degree}\label{sec:conc}

In this section we provide an important step to prove Theorems~\ref{thrm:maxfrechet},~\ref{thrm:maxgumb},~\ref{Thrm:maxsecond} and~\ref{Thrm:maxsecond2}: we discuss the concentration of the maximum degree around the maximum conditional mean degree, the behaviour of which is discussed in the previous section. To this end, we present two propositions, that determine concentration under different scalings.

{\cb  
\begin{proposition}[Concentration of degrees, first-order scaling]\label{lemma:wrrtconcentration}
	Consider the WRG model as in Definition~\ref{def:wrg} and recall the vertex-weight conditions as in Assumption~\ref{ass:weights}. When the vertex-weights satisfy the~\ref{ass:weightgumbel}-\ref{ass:weighttauinf} sub-case, for any $\eta>0$,
	\be \label{eq:concgumbab}
	\lim_{n\to \infty}\P{\max_{\inn}\big|\zni-\Ef{}{\zni}\big|\geq \eta b_n\log n}=0.
	\ee
	When the vertex-weights satisfy the~\ref{ass:weightgumbel}-\ref{ass:weighttaufin} sub-case, 
	\be \label{eq:concrv}
	\max_{\inn}\big|\zni-\Ef{}{\zni}\big|/b_n\log n\overset{\mathbb P-\mathrm{a.s.}}{\longrightarrow} 0.
	\ee 
	Furthermore, when the vertex-weights satisfy the~\ref{ass:weightgumbel}-\ref{ass:weighttau0} sub-case, let \\$t_n:=\exp(-\tau\log n/\log(b_n))$. Then, for any $\eta>0$,
	\be \label{eq:concgumbc}
	\lim_{n\to \infty}\P{\max_{\inn}\big|\zni-\Ef{}{\zni}\big|\geq \eta a_{t_nn}\log(1/t_n)}=0.
	\ee        
	Now suppose the vertex-weights satisfy the~\ref{ass:weightfrechet} case. When $\alpha>2$, for any $\eta>0$,
	\be \label{eq:wrrtconcii}
	\lim_{n\rightarrow \infty}\P{\max_{\inn}\big|\zni-\Ef{}{\zni}\big|>\eta u_n}=0.
	\ee 
	Similarly, when $\alpha\in(1,2)$, for any $\eta>0$,
	\be \label{eq:wrrtinfmeanconcii}
	\lim_{n\rightarrow \infty}\P{\max_{\inn}\big|\zni-\Ef{}{\zni}\big|>\eta n}=0.
	\ee  	
\end{proposition}

\begin{remark}\label{rem:conc}
	The results of Proposition~\ref{lemma:wrrtconcentration} directly imply the concentration of the maximum degree due to the reversed triangle inequality for the supremum norm. That is, for any $I_n\subseteq [n]$, 
	\be 
	\big|\max_{i\in I_n}\zni-\max_{i\in I_n}\Ef{}{\zni}\big|\leq \max_{i\in I_n}\big|\zni-\Ef{}{\zni}\big|.
	\ee 
\end{remark}

}
\begin{proof}
We provide a proof for $m=1$, as the proof for $m>1$ follows analogously. We start by proving the results in which the degrees are scaled by the first-order growth rate and the convergence is in probability, as in ~\eqref{eq:concgumbab},~\eqref{eq:concgumbc},~\eqref{eq:wrrtconcii} and~\eqref{eq:wrrtinfmeanconcii}. By using a large deviation bound for a sum of independent Bernoulli random variables, see e.g.~\cite[Theorem $2.21$]{Hof16}, we obtain
\be \ba\label{eq:largedev}
\Pf{\big|\zni-\Ef{}{\zni}\big|\geq g_n}&\leq 2\exp\Big(-\frac{g_n^2}{2(\Ef{}{\zni}+g_n)}\Big)\\
&\leq 2\exp\Big(-\frac{g_n^2}{2(\max_{\inn}\Ef{}{\zni}+g_n)}\Big),
\ea \ee 
for any non-negative sequence $(g_n)_{n\in\N}$. We start by considering~\eqref{eq:concgumbab}, so that $g_n=\eta b_n\log n$. Hence, the fraction on the right-hand side is $g_nB_n$ for some random variable $B_n$ that converges in probability to some positive constant (see Propositions~\ref{prop:condmeantauinf}). Using a union bound then yields
\be \ba \label{eq:unionconcbound}
\Pf{\max_{\inn}|\zni-\Ef{}{\zni}|\geq g_n}&\leq \sum_{i=1}^n 2\exp(-g_n B_n)\\
&= 2\exp(\log n(1-(g_n/\log n)B_n)),
\ea \ee 
As $g_n/\log n=\eta b_n$ diverges with $n$, it follows that this expression tends to zero in probability. For~\eqref{eq:concgumbc}, we can also use~\eqref{eq:largedev} with $g_n=\eta a_{t_nn}\log(1/t_n)$ and we can write
\be\ba
&\frac{g_n^2}{2(\max_{\inn}\Ef{}{\zni}+g_n)}\\
&\hphantom{ddd}=\frac{(\eta a_{t_nn}\log(1/t_n))^2}{2b_{t_nn}\log(1/t_n)(\max_{\inn}\Ef{}{\zni}/(b_{t_nn}\log(1/t_n))+\eta a_{t_nn}/b_{t_nn})}\\
&\hphantom{ddd}=\frac{\eta^2}{2}\frac{a_{t_nn}^2\log(1/t_n)}{b_{t_nn}}B_n,
\ea \ee
where $B_n$ converges in probability to a positive constant (see the proof of Proposition~\ref{prop:condmeantau0} and use the definition of $a_n$ and $b_n$ in Remark~\ref{remark:weights}). Since $b_n/b_{t_nn}\to \e$ (again see the proof of Proposition~\ref{prop:condmeantau0}) and by the definition of $a_n$ and $b_n$ in the~\ref{ass:weightgumbel}-\ref{ass:weighttau0} sub-case, it follows that the right-hand side is at least $Cb_n(\log n)^{1/\tau-1}$ with high probability, for some small constant $C>0$. Replacing $g_n$ with $Cb_n(\log n)^{1/\tau-1}$, which grows faster than $\log n$, in~\eqref{eq:unionconcbound} then yields the desired result. Finally, for~\eqref{eq:wrrtconcii} and~\eqref{eq:wrrtinfmeanconcii}, the same approach is valid with $g_n=u_n$ and $g_n=n$, respectively, though $B_n$ now converges in distribution to some random variable (see Proposition~\ref{prop:condmeanfrechet}). Still, it follows that $1-\eta^2(g_n/\log n)B_n<0$ with high probability, so that the right-hand side of~\eqref{eq:unionconcbound} still converges to zero in probability. Then, in all the above cases, using the dominated convergence theorem yields~\eqref{eq:concgumbab},~\eqref{eq:concgumbc},~\eqref{eq:wrrtconcii} and~\eqref{eq:wrrtinfmeanconcii}.

We now turn to the almost sure result for the~\ref{ass:weightgumbel}-\ref{ass:weighttaufin} sub-case, as in~\eqref{eq:concrv}. Similar to~\eqref{eq:unionconcbound}, we write for any $\eta>0$,
\be \ba
\Pf{\max_{\inn}|\zni-\Ef{}{\zni}|\geq \eta g_n}\leq 2\exp\Big(\log n-\frac{\eta^2g_n^2}{2(\max_{\inn}\Ef{}{\zni}+\eta g_n)}\Big),
\ea \ee
where $g_n=b_n\log n$. By Proposition~\ref{prop:condmeantaufin}, we can almost surely bound this from above by 
\be 
2\exp(\log n-\eta^2Cb_n\log n)\leq 2\exp\Big(-\frac{1}{2}\eta^2C b_n\log n \Big ),
\ee 
for some sufficiently small constant $C>0$ and when $n$ is at least $N\in\N$, for some random $N$. Thus, we can conclude that this upper bound is almost surely summable in $n$, as $b_n$ tends to infinity with $n$. Since $\eta$ is arbitrary, the $\mathbb P_W$-almost sure convergence to $0$ is established. Then, since
\be \ba
\mathbb P{}&\bigg(\forall \eta>0\ \exists N\in\N\ \forall n\geq N: \max_{\inn}|\zni-\Ef{}{\zni}|<\eta g_n\bigg)\\
&=\E{\Pf{\forall \eta>0\ \exists N\in\N\ \forall n\geq N: \max_{\inn}|\zni-\Ef{}{\zni}|<\eta g_n}}=\E{1}=1,
\ea \ee 
the $\mathbb P$-almost sure convergence follows as well and which concludes the proof.
\end{proof} 

{\cb

Proposition~\ref{lemma:wrrtconcentration} provides concentration of the in-degrees around the conditional mean in-degrees under the first-order scaling (with the exception of~\eqref{eq:concgumbc}) of the maximum conditional mean degree (see the propositions in Section~\ref{sec:meandeg}). In the following proposition, we provide a stronger result for the~\ref{ass:weightgumbel}-\ref{ass:weighttaufin} sub-case. Here, with a more precise and detailed proof, we can also prove concentration under the second-order scaling.

\begin{proposition}[Concentration of degrees, second-order scaling]\label{prop:conc2nd}
	Consider the WRG model as in Definition~\ref{def:wrg} and recall the vertex-weight conditions as in Assumption~\ref{ass:weights}. When the vertex-weights satisfy the~\ref{ass:weightgumbel}-\ref{ass:weighttaufin} sub-case, for any $\tau\in(0,1]$ and $\eta>0$,
	\be\label{eq:gumb2ndconc}
	\lim_{n\to\infty}\P{\big|\max_{\inn} \zni-\max_{\inn}\Ef{}{\zni}\big|\geq \eta a_n\log n\log\log n}=0.
	\ee
	Also, let $\ell$ be a strictly positive function such that $\lim_{n\to\infty}\log(\ell(n))^2/\log n=\zeta_0$ for some $\zeta_0\in[0,\infty)$. Recall $C_n(\gamma,s,t,\ell)$ from~\eqref{eq:cnin}. Then, for any $0<s<t<\infty, \tau\in(0,1)$ and $\eta>0$,
	\be\label{eq:betaconc}
	\lim_{n\to\infty}\P{\big|\max_{i\in C_n(\gamma,s,t,\ell)}\zni-\max_{i\in C_n(\gamma,s,t,\ell)}\Ef{}{\zni}\big|\geq \eta a_n\log n}=0.
	\ee
\end{proposition}
}

\begin{proof}	
We start by proving~\eqref{eq:gumb2ndconc}. We first aim to show that
\be \label{eq:2ndordlower}
\Pf{\max_{\inn}\zni\geq \max_{\inn}\Ef{}{\zni}+\eta a_n\log n\log\log n}\toinp 0.
\ee 
We provide a proof for the case that $\tau\in(0,3/4)$, after which we develop a more involved proof for all $\tau\in(0,1]$ which builds on the initial proof.

Let $\tau\in(0,3/4)$. We use a union bound for~\eqref{eq:2ndordlower} and split the sum into two sets, defined as
\be
C^1_n:=\{\inn: W_i< (1-\sqrt{\eps_n})b_i\},\qquad C^2_n:=\{\inn:W_i\geq (1-\sqrt{\eps_n})b_i\},
\ee
where $\eps_n=(\log n)^{-c}$, for some $c\in(0,2)$ to be determined later on. The size of $C^2_n$ can be controlled well enough, so that a precise union bound can be applied to this part. For the other set, we claim that with high probability,
\be \label{eq:c2inc3}
C^1_n\subseteq \{\inn: \Ef{}{\zni}\leq (1-\sqrt{\eps_n})\max_{\inn}\Ef{}{\zni}\}=:\wt C^1_n.
\ee 
The idea we apply later is that, { \cb on the event $\{C^1_n\subseteq \wt C^1_n\}$ (in the sense that we consider only $\omega\in \{C^1_n\subseteq \wt C^1_n\}$)}, for each $i\in C^1_n$ we are able to manipulate terms in the probability in~\eqref{eq:2ndordlower} to such an extent that we obtain an improved large deviation bound and can then  show this part converges to zero in probability as well. However, we first focus on proving the the inclusion in~\eqref{eq:c2inc3} holds with high probability.

Take $i\in C^1_n$. Then, 
\be
\Ef{}{\zni}=W_i\sum_{j=i}^{n-1}\frac{1}{S_j}\leq (1-\sqrt{\eps_n})b_i \log(n/i)\Big(1+\frac{|Y_n-Y_i|}{\log(n/i)}\Big),
\ee  
where $Y_n:=\sum_{j=1}^{n-1}1/S_j-\log n$. Furthermore, by Proposition~\ref{prop:condmeantaufin2ndcomp},~\eqref{eq:taufin2}, with high probability
\be 
\max_{\inn}\Ef{}{\zni}\geq (1-\gamma)b_{n^\gamma}\log n\Big(1+\frac{1/2-\eta}{1-\gamma}\frac{\log\log n}{\log n}\Big),
\ee 
for any fixed $\eta>0$.	If thus suffices to show that when $i\in C^1_n$, then with high probability
\be \label{eq:concreq}
b_i \log(n/i)\Big(1+\frac{|Y_n-Y_i|}{\log(n/i)}\Big)\leq (1-\gamma)b_{n^\gamma}\log n\Big(1+\frac{1/2-\eta}{1-\gamma}\frac{\log\log n}{\log n}\Big)
\ee 
is satisfied for some $\eta>0$. We show a stronger statement, namely that~\eqref{eq:concreq} holds with high probability for any $\inn$. 
We recall from~\eqref{eq:sumfitnessconv} that $Y_n$ converges almost surely. 
In particular, we have that, with high probability, $\max_{i \in [n]} Y_i \leq (\log \log n)^{1/2}$ and 
$\max_{\log \log n \leq i \leq n} |Y_i - Y_n|$ converges to $0$ in probability.	Note first that for $i \leq (\log \log n)$,
\be \ba
b_i \log(n/i)\Big(1+\frac{|Y_n-Y_i|}{\log(n/i)}\Big)
&\leq (1-\gamma)b_{n^\gamma }\log n\Big(1+\frac{1/2-\eta}{1-\gamma} \frac{\log\log n}{\log n}\Big).
\ea \ee 
Next we consider  $\log \log n \leq i\leq n^{\gamma-\eps}$ or $i\geq n^{\gamma+\eps}$ for some $\eps>0$. We get that with high probability
\be 
b_i \log(n/i)\Big(1+\frac{|Y_n-Y_i|}{\log(n/i)}\Big)\leq C(1-\gamma)b_{n^\gamma}\log n,
\ee 
for some $C\in(0,1)$, as follows from the proof of Proposition~\ref{prop:condmeantaufin} (\eqref{eq:eps_limit} to be more precise),  so that~\eqref{eq:concreq} is satisfied.
It remains to prove that~\eqref{eq:concreq} is satisfied with high probability when $i=n^\gamma k_n$, where $k_n$ is sub-polynomial, in the sense that $|\log k_n| / \log n \rightarrow 0$. First, as before, with high probability
\be 
1+\frac{|Y_n-Y_i|}{\log(n/i)}\leq 1+\eta\frac{\log\log( n^\gamma)}{\log n},
\ee 
for any constant $\eta>0$. Moreover, by Remark~\ref{remark:weights}, for any $\eta>0$,
\be 
(1-\gamma)b_{n^\gamma}\log n\geq c_1(1-\gamma)\log n\log(n^\gamma)^{1/\tau}\Big(1+\frac{(b/\tau-\eta)\log\log(n^\gamma)}{(1-\gamma) \log n}\Big),
\ee 
for all $n$ large. Finally, for $n$ large,
\be \ba
b_i \log(n/i)={}&c_1(1-\gamma)\log n \log(n^\gamma)^{1/\tau} \Big(1+\frac{(b/\tau)\log\log (n^\gamma k_n)+b\log c_1+\log \tau}{(1-\gamma)\log n}\Big)\\
&\times\Big(1-\frac{\log k_n}{(1-\gamma)\log n}\Big)\Big(1+\frac{\log k_n}{\gamma\log n}\Big)^{1/\tau}\\
\leq{}& c_1(1-\gamma)\log n\log(n^\gamma)^{1/\tau}\Big(1+\frac{(b/\tau)\log\log (n^{\gamma\pm \eta})}{(1-\gamma)\log n}\Big)\\
\leq{}& c_1(1-\gamma)\log n\log(n^\gamma)^{1/\tau}\Big(1+\frac{(b/\tau+\eta)\log\log (n^\gamma)}{(1-\gamma)\log n}\Big)
\ea \ee 
as $(1-x/(1-\gamma))(1+x/\gamma)^{1/\tau}\leq 1$ for $x\in[0,1-\gamma]$ and where the $\pm$ sign depends on the sign of $b$. Combining all of the above, we find for $n$ large and with high probability,
\be \ba
b_i{}&\log(n/i)\Big(1+\frac{|Y_n-Y_i|}{\log(n/i)}\Big)\\
&\leq c_1(1-\gamma)\log n\log(n^\gamma)^{1/\tau}\Big(1+\frac{(b/\tau+\eta)\log\log (n^\gamma)}{(1-\gamma)\log n}\Big)\Big(1+\eta\frac{\log\log(n^\gamma)}{\log n}\Big) \\
&\leq c_1(1-\gamma)\log n\log(n^\gamma)^{1/\tau}\Big(1+\Big(\frac{b/\tau+\eta}{1-\gamma}+2\eta\Big)\frac{\log\log (n^\gamma)}{\log n}\Big),
\ea\ee 
and 
\be \ba 
(1-\gamma){}&b_{n^\gamma}\log n\Big(1+\frac{1/2-\eta}{1-\gamma}\frac{\log\log n}{\log n}\Big)\\
\geq{}& c_1(1-\gamma)\log n\log(n^\gamma)^{1/\tau}\Big(1+\Big(\frac{b/\tau-\eta}{1-\gamma}+\frac{1/2-\eta}{1-\gamma} -\eta\Big)\frac{\log\log(n^\gamma)}{\log n}\Big),
\ea \ee 
so that~\eqref{eq:concreq} is established with high probability for all $i\in[n-1]$, in particular for all $i\in C^1_n$, when $\eta$ is sufficiently small.

As a second step, we control the size of $C^2_n$. First, we fix an $\eta>0$ and set $I=\max\{I_1,I_2,I_3\}$, where $I_1,I_2,I_3\in\N$ are such that 
\be \ba
\P{W\geq (1-\sqrt{\eps_n})b_i}&\leq (1+\eta)a((1-\sqrt{\eps_n})b_i)^b \exp(-((1-\sqrt{\eps_n})b_i/c_1)^\tau), \qquad &i\geq I_1,&\\
\P{W\geq b_i}&\geq (1-\eta)ab_i^b\exp(-(b_i/c_1)^\tau),\qquad &i\geq I_2,&\\
\P{W\geq b_i}&\leq (1+\eta)/i,\qquad &i\geq I_3.&
\ea\ee 
We note that $I$ is well-defined, as $b_i$ is (eventually) increasing in $i$ and diverges with $i$ and as $b_i$ is such that $\lim_{i\to\infty}\P{W\geq b_i}i = 1$. We then arrive at 
\be\ba  
\E{|C^2_n|}& =\sum_{i=1}^n \P{W\geq (1-\sqrt{\eps_n})b_i}\\ & \leq I+\sum_{i=I}^n (1+\eta)a((1-\sqrt{\eps_n})b_i)^b \exp(-((1-\sqrt{\eps_n})b_i/c_1)^\tau).
\ea \ee 
By writing $(1-\sqrt{\eps_n})^\tau = 1-\tau\sqrt{\eps_n}(1+o(1))$, and as we can bound $(1-\sqrt{\eps_n})^b$ from above by some sufficiently large constant $C$ which depends only on $b$, we obtain
\be\ba
\E{|C^2_n|}&\leq I+(1+\eta)C\sum_{i=I}^n ab_i^b \exp(-(b_i/c_1)^\tau)\exp(\tau\sqrt{\eps_n}(b_i/c_1)^\tau(1+o(1)))   \\
&\leq I+(1+\eta)^2(1-\eta)^{-1}C\exp(\tau\sqrt{\eps_n}(b_n/c_1)^\tau (1+o(1)))\sum_{i=I}^n i^{-1}\\
&\leq I+(1+\eta)^2(1-\eta)^{-1}C\exp(\tau\sqrt{\eps_n}\log n(1+o(1)))\log n\\
&=I+(1+\eta)^2(1-\eta)^{-1}C\exp(\tau\sqrt{\eps_n}\log n(1+o(1))).
\ea \ee 
Since $\sqrt{\eps_n}\log n=(\log n)^{1-c/2}$ and $c\in(0,2)$, it follows by Markov's inequality that for any $C_3>\tau$,
\be \label{eq:cn1conv}
|C^2_n|\exp(-C_3(\log n)^{1-c/2})\toinp 0.
\ee 
We are now ready to prove~\eqref{eq:2ndordlower}. We use the above and~\eqref{eq:c2inc3} to obtain 
\be \ba
\mathbb P_W{}\Big({}&\max_{\inn}\zni\geq \max_{\inn}\Ef{}{\zni}+\eta a_n\log n\log\log n\Big)\\
\leq{}& \mathbb P_W\Big(\Big\{\max_{\inn}\zni\geq \max_{\inn}\Ef{}{\zni}+\eta a_n\log n\log\log n\Big\}\cap \{ C^1_n\subseteq \wt C^1_n\}\Big)\\ & \qquad +\P{C^1_n \not \subseteq \wt C^1_n}\\
\leq{}& \sum_{i\in C^1_n}\Pf{\{\zni\geq \max_{\inn}\Ef{}{\zni}+\eta a_n\log n\log\log n\}\cap \{ C^1_n\subseteq \wt C^1_n\}}\\
&+\sum_{i\in C^2_n}\Pf{\zni\geq \max_{\inn}\Ef{}{\zni}+\eta a_n\log n\log\log n}+\P{C^1_n\not\subseteq \wt C^1_n}.
\ea \ee  
As established above, the third probability converges to zero with $n$. For the first probability we use that on $\{C^1_n\subseteq \wt C^1_n\}$, $\Ef{}{\zni}\leq  (1-\sqrt{\eps_n})\max_{\inn}\Ef{}{\zni}$, and for the second probability we use that $\max_{\inn}\Ef{}{\zni}\geq \Ef{}{\zni}$ for any $\inn$ to find the upper bound
\be \ba \label{eq:unionbound}
\sum_{i\in C^1_n}{}&\mathbb P_W\Big(\zni-\Ef{}{\zni}\geq \sqrt{\eps_n}\max_{\inn}\Ef{}{\zni}+\eta a_n\log n\log\log n\Big)\\
&+\sum_{i\in C^2_n}\Pf{\zni-\Ef{}{\zni}\geq \eta a_n\log n\log\log n}+o(1).
\ea \ee 
Now, applying a large deviation bound to~\eqref{eq:unionbound} yields
\be \ba\label{eq:largedev2}
\sum_{i\in C^1_n}{}&\exp\Big(-\frac{(\sqrt{\eps_n}\max_{\inn}\Ef{}{\zni}+\eta a_n\log n\log\log n)^2}{2(\Ef{}{\zni}+\sqrt{\eps_n}\max_{\inn}\Ef{}{\zni}+\eta a_n\log n\log\log n)}\Big)\\
&+\sum_{i\in  C^2_n}\exp\Big(-\frac{(\eta a_n\log n\log\log n)^2}{2(\Ef{}{\zni}+\eta a_n\log n\log\log n)}\Big)+o(1).
\ea\ee 
In both exponents we bound the conditional mean in the denominator by the maximum conditional mean. This yields the upper bound
\be \label{eq:finalmeanconcub}
n\exp(-\eps_n b_n \log n A_n)+|C^2_n|\exp\Big(-\frac{\eta^2 a_n^2 \log n (\log\log n)^2}{2b_n}B_n\Big),
\ee 
where both $A_n,B_n$ converge in probability to positive constants. We now set $c<1/\tau$ to ensure that $\eps_n b_n$ diverges, so that the first term converges to zero in probability. Thus, $c<(1/\tau)\wedge 2$ is required. We can write the second term as 
\be 
|C^2_n| \exp(-C_3 (\log n)^{1-c/2})\exp(C_3(\log n)^{1-c/2}-(\log n)^{1/\tau-1}(\log\log n)^2\wt B_n),
\ee 
where $\wt B_n$ converges in probability to a positive constant. Now, by~\eqref{eq:cn1conv}, the product of the first two terms converges to zero in probability and the last term converges to zero in probability when $1-c/2<1/\tau-1$, or $c>4-2/\tau$. We thus find that~\eqref{eq:2ndordlower} is established when we can find a $c\in(0,2)$ such that $4-2/\tau<c<1/\tau$, which holds for all $\tau\in (0,3/4)$.

We now extend this approach so that the desired result in~\eqref{eq:2ndordlower} can be achieved for all $\tau\in(0,1]$. To this end, we define the sequence $(p_k)_{k\in\N}$ as $p_k:=(3/4)p_{k-1}+1/(4c\tau), k\geq 1$, and $p_0=1/2$. We solve the recursion to obtain
\be \label{eq:pkseq}
p_k=\frac{1}{c\tau}-\Big(\frac{1}{c\tau}-\frac{1}{2}\Big)\Big(\frac 3 4\Big)^k,
\ee 
from which it immediately follows that $p_k$ is increasing when $c<(1/\tau)\wedge 2$. Moreover, we can rewrite the recursion as $p_k=p_{k-1}/2+(p_{k-1}/2+1/(2c\tau))/2$, so that 
\be \label{eq:pkbounds2}
p_k\in(p_{k-1},p_{k-1}/2+1/(2c\tau)),\qquad k\geq 1.
\ee 
We also define, for some $K\in\N_0$ to be specified later, the sets
\be \ba 
C^1_n&:=\{\inn: W_i< (1-\eps_n^{p_0})b_i \},\\
C^k_n&:=\{\inn: W_i \in[(1-\eps_n^{p_{k-1}})b_i,(1-\eps_n^{p_k})b_i)\},\quad &k\in\{2,\ldots,K\},&\\
\wt C^{k}_n&:=\{\inn: \Ef{}{\zni}\leq (1-\eps_n^{p_k})\max_{\inn}\Ef{}{\zni}\},\quad &k\in[K],&\\
C^{K+1}_n&:=\{\inn: W_i\geq (1-\eps_n^{p_K})b_i\}.
\ea \ee 
By the same argument as provided above, it follows that with high probability $C^k_n\subseteq \wt C^k_n$ for each $k\in\N$. Similar to the approach for $\tau\in (0,3/4)$ we can then bound
\be\ba \label{eq:maxsumsplit}
\mathbb{P}_\F{}&\Big(\max_{\inn}\zni\geq \max_{\inn}\Ef{}{\zni}+\eta a_n\log n\log\log n\Big)\\
\leq{}& \sum_{i\in C^1_n}\mathbb{P}_\F\Big(\zni-\Ef{}{\zni}\geq \eps_n^{p_0}\max_{\inn}\Ef{}{\zni}+\eta a_n \log n\log\log n\Big)\\
&+\sum_{k=2}^K\sum_{i\in C^k_n}\mathbb{P}_\F\Big(\zni-\Ef{}{\zni}\geq \eps_n^{p_k}\max_{\inn}\Ef{}{\zni}+\eta a_n\log n\log\log n\Big)\\
&+\sum_{i\in C^{K+1}_n}\mathbb{P}_\F\Big(\zni-\Ef{}{\zni}\geq \eta a_n\log n\log\log n\Big)+\P{\bigcup_{k=1}^K\{C^k_n\not\subseteq \wt C^k_n\}}. 
\ea \ee 
We do not include the sum over $i\in C^1_n$ in the double sum, as the upper bound we use is slightly different for these terms. The last term converges to zero by using a union bound, as established above and since $K$ is fixed. As in the simplified proof for $\tau\in(0,3/4)$ where $K=1$, we require $cp_k<1$ for all $k\in\{0,1,\ldots,K\}$, so that $\eta a_n\log n\log\log n$ is negligible compared to $\eps_n^{p_k}\max_{\inn}\Ef{}{\zni}$. Since $p_k$ is increasing, $cp_K<1$ suffices. Using~\eqref{eq:pkseq} yields that $K$ cannot be too large, i.e.\  we need
\be \label{eq:Kconstraint1}
\Big(\frac 3 4\Big)^K>\frac{1}{c}\Big(\frac{1}{\tau}-1\Big)\Big(\frac{1}{c\tau}-\frac{1}{2}\Big)^{-1}.
\ee
We now again apply a large deviation bound as in~\eqref{eq:largedev2}. Furthermore, with an equivalent approach that led to~\eqref{eq:cn1conv}, we find with high probability an upper bound for~\eqref{eq:maxsumsplit} of the form
\be \ba\label{eq:largedevK}
n{}&\exp(-\eps_n^{2p_0}b_n \log n A_{0,n})
+\sum_{k=1}^K C\exp(C_k(\log n)^{1-cp_{k-1}}-\eta^2\eps_n^{2p_k}b_n\log n A_{k,n})\\
&+C\exp\Big(C_{K+1}(\log n)^{1-cp_K}-\eta^2\frac{a_n^2\log n(\log\log n)^2}{b_n} A_{K+1,n}\Big)+o(1),
\ea \ee 
where $C,C_1,\ldots,C_{K+1}>0$ are suitable constants and the $A_{k,n}$ are random variables which converge in probability to some strictly positive constants $A_k,k\in\{0,1,\ldots,K+1\}$. In order for all these terms to converge to zero in probability, the following conditions need to be met:
\be \label{eq:pkcond}
1/\tau-2p_0c>0,\qquad 1-cp_{k-1}<-2cp_k+1/\tau+1,\quad k\in[K],\qquad 1-cp_K\leq 1/\tau-1.
\ee 
Since $p_0=1/2$, it follows that $c<(1/\tau)\wedge 2$ still needs to be satisfied. By the bounds on $p_k$ in~\eqref{eq:pkbounds2} the second condition is satisfied and the final condition holds when $p_K\geq (2-1/\tau)/c$, or
\be
\Big(\frac 3 4\Big)^K\leq 2\frac{1}{c}\Big(\frac 1 \tau-1\Big)\Big(\frac{1}{c\tau}-\frac{1}{2}\Big)^{-1}.
\ee 
Together with~\eqref{eq:Kconstraint1} this yields
\be 
\frac{1}{c}\Big(\frac{1}{\tau}-1\Big)\Big(\frac{1}{c\tau}-\frac{1}{2}\Big)^{-1}<\Big(\frac 3 4\Big)^K\leq \frac{2}{c}\Big(\frac 1 \tau-1\Big)\Big(\frac{1}{c\tau}-\frac{1}{2}\Big)^{-1}.
\ee 
Since the ratio of the lower and upper bound is exactly $2$, such a $K\in\N_0$ can always be found, as long as 
\be 
\frac{1}{c}\Big(\frac 1 \tau-1\Big)\Big(\frac{1}{c\tau}-\frac{1}{2}\Big)^{-1}<1,
\ee 
which is satisfied for any $\tau\in (0,1)$ when $c<2$. It thus follows that~\eqref{eq:2ndordlower} holds for all $\tau\in (0,1)$. When $\tau=1$, the condition $p_K\geq  (2-1/\tau)/c$ simplifies to $p_K\geq 1/c$, which cannot hold together with the condition $p_K< 1/c$ for any $K\in\N$. However, when $\tau=1$, the limit of $p_K$ is $1/c$ (as $K$ tends to infinity), so that we let $K$ tend to infinity with $n$. Therefore, we repeat the same arguments, but now take $K=K(n)=\lceil 1/|\log(3/4)|( \log\log\log n-\log\log \log\log n)\rceil$. We then need to check the following things:
\begin{itemize}
	\item[$(i)$] The conditions on $p_k$ and $c$ are met.
	\item[$(ii)$] The final probability in~\eqref{eq:maxsumsplit} converges to zero in probability with $n$.
	\item[$(iii)$] All terms in~\eqref{eq:largedevK} individually converge to zero with $n$, as well as when summing them all together.
\end{itemize}
The first two conditions in~\eqref{eq:pkcond} still need to be satisfied, and this is the case when $K$ grows with $n$ as well. Furthermore, as $\tau=1$, $p_k<1/c$ is satisfied for all $k\in\N$, establishing $(i)$.

For $(ii)$, we observe that by~\eqref{eq:concreq},
\be \ba
\mathbb P{}&\bigg(\bigcup_{k=1}^K \{C^k_n\not\subseteq \wt C^k_n\}\bigg)\\
\leq{}& \P{\bigcup_{i=1}^{n-1} \bigg\{b_i \log(n/i)\Big(1+\frac{|Y_n-Y_i|}{\log(n/i)}\Big)> (1-\gamma)b_{n^\gamma}\log n\Big(1+\frac{1/2-\eta}{\tau}\frac{\log\log n}{\log n}\Big)\bigg\}},
\ea\ee 
for some small $\eta>0$, and the decay of this probability to zero has been established at the start of this proof.

Finally, for $(iii)$, we check the convergence of the terms in~\eqref{eq:largedevK}. The first term clearly still converges to zero in probability. Then, for each term in the sum we note that the constants $C_k$ can all be chosen such that $C_k<\tau+\delta$ for all $k\in[K+1]$ and any $\delta>0$. Similarly, the random terms $A_{k,n}$, which converge in probability to positive constants $A_k$, can also be shown to be bounded away from zero uniformly in $k$. This yields that we need only consider the rate of divergence of the remaining terms. We write,
\be \ba 
(\log n)^{1-cp_{k-1}}&=(\log n)^{(2-c)(2/3)(3/4)^k}=\exp\Big(\frac{2}{3}(2-c)\exp(k\log(3/4)+\log\log\log n)\Big),\\
\eps_n^{2p_k}b_n\log n&\sim (\log n)^{(2-c)(3/4)^k}=\exp((2-c)\exp(k\log(3/4)+\log\log\log n)), 
\ea \ee 
where we recall that $\tau=1$ and thus the expression of $p_k$ is simplified. We note that both terms diverge with $n$ for each $k\in[K]$ by the choice of $K$ and since $\log(3/4)>-1$. Moreover, the latter term is dominant for every $k\in[K]$, so that each term in the sum in~\eqref{eq:largedevK} tends to zero in probability. An upper bound for the entire sum is established when setting $p_{k-1}=p_{K-1}, p_k=p_K$ and bounding (with high probability) $C_k<\tau+\delta$ and $A_{k,n}>\delta$, for some small $\delta>0$ uniformly in $k$. We then obtain the upper bound 
\be 
C\exp\big(\log K-\eta^2 \delta (\log\log n)^{3(2-c)/4}(1+o(1))\big),
\ee 
which converges to zero as $\log K$ is negligible compared to the double logarithmic term. Now, for the final term in~\eqref{eq:largedevK}, we write as before,
\be \ba
(\log n)^{1-cp_K}&=\exp((1-c/2)\exp(K\log (3/4)+\log\log\log n))\\ 
&\leq \exp((1-c/2)\log \log \log n),\quad \text{and}\\
(\log\log n)^2&=\exp(2\log\log\log n),
\ea\ee 
so that the latter term dominates the former, which yields the desired result.

What remains is to show that 
\be \label{eq:2ndordupper}
\Pf{\max_{\inn}\zni\leq \max_{\inn}\Ef{}{\zni}-\eta a_n\log n\log\log n}\toinp 0
\ee 
holds for any $\tau\in(0,1]$ and $\eta>0$. We note that the event in the brackets occurs when
\be 
\zni\leq \max_{\inn}\Ef{}{\zni}-\eta a_n\log n\log\log n \quad \forall \inn,
\ee 
so that we obtain an upper bound if the inequality holds for $i=\wt I_n$ (recall that $\wt I_n:=\inf\{\inn:\Ef{}{\zni}\geq \Ef{}{\Zm_n(j)}\text{ for all }j\in[n]\}$). Hence,  
\be \ba 
\mathbb{P}_\F{}&\Big(\!\max_{\inn}\zni\leq \max_{\inn}\Ef{}{\zni}-\eta a_n\log n\log\log n\Big)\\
&\leq \mathbb{P}_\F\!\Big(\!\Zm_n(\wt I_n)\leq \max_{\inn}\Ef{}{\Zm_n(i)}-\eta a_n \log n\log\log n\Big).
\ea \ee 
Since $\wt I_n$ is determined by $\F_1,\ldots,\F_n$, it follows that $\max_{\inn}\Ef{}{\zni}=\Ef{}{\Zm_n(\wt I_n)}$. Thus, 
\be \ba\label{eq:concin}
\mathbb{P}_\F{}&\Big(\Zm_n(\wt I_n)\leq \max_{\inn}\Ef{}{\Zm_n(i)}-\eta a_n \log n\log\log n\Big)\\
&\leq \Pf{\big|\Zm_n(\wt I_n)-\Ef{}{\Zm_n(\wt I_n)}\big|\geq \eta a_n\log n\log\log n}\\
&\leq \frac{\Var(\Zm_n(\wt I_n)\,|\, (W_i)_{i\in\N})}{(\eta a_n\log n\log\log n)^2}\\
&\leq \frac{\max_{\inn}\Ef{}{\zni}}{b_n\log n}\frac{b_n}{\eta^2 a_n^2 \log n(\log\log n)^2},
\ea \ee 
which converges to zero almost surely, as the first fraction on the right-hand side converges to a positive constant almost surely and the second fraction converges to zero, since $\tau\in(0,1]$. Therefore,~\eqref{eq:2ndordupper} holds and combining this with~\eqref{eq:2ndordlower} and the dominated convergence theorem concludes the proof of~\eqref{eq:gumb2ndconc}.

Finally,~\eqref{eq:betaconc} can be proved in a similar way as~\eqref{eq:gumb2ndconc}, though the case $\tau=1$ no longer holds due to the absence of the $\log\log n$ term.
\end{proof}

\section{Proof of the main theorems}\label{sec:mainproof}

We now prove the main results, Theorems~\ref{thrm:maxbdd},~\ref{thrm:maxfrechet}, \ref{thrm:maxgumb},~\ref{Thrm:maxsecond} and~\ref{Thrm:maxsecond2}. We split the proof of Theorem~\ref{thrm:maxbdd} into three parts, to aid the reader. In all cases, the proof also holds for the model with a \emph{random out-degree} as discussed in Remark~\ref{remark:def}$(ii)$ by setting $m=1$, as in this model the in-degree can still be written as a sum of indicators. 

Before we prove Theorem~\ref{thrm:maxbdd}, we state an adaptation of~\cite[Lemma $1$]{DevLu95}:

\begin{lemma}\label{lemma:unionprob}
Let $A_{n,i}:=\{\zni\geq a_n\}$ for some sequence $(a_n)_{n\in\N}$. Then,
\be \ba 
\Pf{\bigcup_{i=1}^n A_{n,i}}&\leq \sum_{i=1}^n \Pf{A_{n,i}},\qquad \Pf{\bigcup_{i=1}^n A_{n,i}}\geq \frac{\sum_{i=1}^n \Pf{A_{n,i}}}{1+\sum_{i=1}^n \Pf{A_{n,i}}},
\ea \ee 
and as a result,
\be \ba 
\Pf{\bigcup_{i=1}^n A_{n,i}}\overset{\mathbb{P}/\mathrm{a.s.}}{\longrightarrow}\begin{cases}
	0, \mbox{ if} & \sum_{i=1}^n \Pf{A_{n,i}}\overset{\mathbb{P}/\mathrm{a.s.}}{\longrightarrow} 0,\\
	1, \mbox{ if} & \sum_{i=1}^n \Pf{A_{n,i}}\overset{\mathbb P /\mathrm{a.s.}}{\longrightarrow} \infty.
\end{cases}
\ea \ee 
\end{lemma}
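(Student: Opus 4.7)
The plan is to establish the two inequalities first and then deduce the convergence statements from them. The upper bound is immediate from the conditional union (Boole) bound, so the substantive content is the lower bound, which I would obtain via a second-moment argument of Chung--Erd\H{o}s/Paley--Zygmund type combined with the conditional negative quadrant dependence (NQD) of the increments (assumption~\ref{Ass:gwrrtA5}, which holds for the WRG model because, given the weights, the $m$ half-edges emitted at each step form a multinomial vector across the existing vertices).

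Concretely, I would write
\[
\Pf{\bigcup_{i=1}^n A_{n,i}} \;=\; \Pf{\sum_{i=1}^n \ind_{A_{n,i}} \geq 1} \;\geq\; \frac{\bigl(\Ef{}{\sum_{i=1}^n \ind_{A_{n,i}}}\bigr)^2}{\Ef{\big}{\bigl(\sum_{i=1}^n \ind_{A_{n,i}}\bigr)^2}}
\]
by the Paley--Zygmund inequality, and then expand
\[
\Ef{\Big}{\Bigl(\sum_{i=1}^n \ind_{A_{n,i}}\Bigr)^2} \;=\; \sum_{i=1}^n \Pf{A_{n,i}} + \sum_{i\neq j} \Pf{A_{n,i}\cap A_{n,j}}.
\]
The key step is to show that $\Pf{A_{n,i}\cap A_{n,j}} \leq \Pf{A_{n,i}}\Pf{A_{n,j}}$ for $i\neq j$. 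Since $\ind_{A_{n,i}}$ is a non-decreasing function of $(\Delta\Zm_k(i))_{k\geq i}$ and these touch a coordinate (the $i$-th) disjoint from the one used by $\ind_{A_{n,j}}$, the NQD property of $\{\Delta\Zm_k(\cdot)\}$ at each fixed time $k$, combined with the conditional independence across different $k$ given the weights, transfers via the usual stability of NQD under coordinate-wise monotone functions of independent blocks. Substituting the resulting bound $\Ef{}{(\sum \ind_{A_{n,i}})^2}\leq \sum \Pf{A_{n,i}} + (\sum \Pf{A_{n,i}})^2$ into the Paley--Zygmund inequality yields exactly
\[
\Pf{\bigcup_{i=1}^n A_{n,i}} \;\geq\; \frac{\sum_{i=1}^n \Pf{A_{n,i}}}{1+\sum_{i=1}^n \Pf{A_{n,i}}}.
\]

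For the convergence conclusion, I would simply sandwich: if $\sum_i \Pf{A_{n,i}} \to 0$ (in probability or almost surely), the upper bound forces $\Pf{\bigcup A_{n,i}} \to 0$; if $\sum_i \Pf{A_{n,i}} \to \infty$, the continuous mapping $x\mapsto x/(1+x)$ sends the lower bound to $1$, hence $\Pf{\bigcup A_{n,i}} \to 1$. The main obstacle is the propagation of NQD: writing it cleanly requires observing that, conditionally on $(\F_k)_{k\in\N}$, the family $\{\ind_{A_{n,i}}\}_{i\in[n]}$ is a family of coordinate-wise non-decreasing functions of blocks of increments that share time indices but use disjoint vertex indices, which is where assumption~\ref{Ass:gwrrtA5} is actually used (and indeed this is precisely the adaptation of~\cite[Lemma~1]{DevLu95} that is needed in the WRG setting where NQD replaces the exact negative correlation available for uniform attachment).
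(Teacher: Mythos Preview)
Your proposal is correct and is precisely the argument behind \cite[Lemma~1]{DevLu95}, which the paper simply invokes verbatim under the conditional measure $\mathbb{P}_W$ without spelling out any details. Your explicit Chung--Erd\H{o}s/second-moment computation together with the negative dependence of the in-degrees (which in the WRG model follows from the multinomial structure of the increments and is exactly what the cited lemma uses) is the content being deferred to, so the two approaches coincide.
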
 

\begin{proof}
The result directly follows by applying~\cite[Lemma $1$]{DevLu95} to the conditional probability measure $\mathbb{P}_W$.
\end{proof}

{\cb Before we prove Theorem~\ref{thrm:maxbdd}, we obtain a  weaker result in the following proposition which implies convergence in probability of the rescaled maximum degree. The main result follows by {\cb strengthening the estimates from the  proof of the proposition and to combine them with a Borel-Cantelli argument} to obtain almost surely convergence.

\begin{proposition}\label{prop:bddub}
	Consider the WRG model as in Definition~\ref{def:wrg} and assume the vertex-weights satisfy the~\ref{ass:weightbdd} case of Assumption~\ref{ass:weights}. Recall that $\theta_m:=1+\E W/m$. For any $c>1/\log \theta_m$, 
	\be 
	\lim_{n\to\infty}\P{\max_{\inn}\zni \leq c\log n}=1.
	\ee 
	Also, for any $c<1/\log \theta_m$, 
	\be 
	\lim_{n\to\infty}\P{\max_{\inn}\zni \geq c\log n}=1.
	\ee 
\end{proposition}

We start by proving this propositions, as the final proof of Theorem~\ref{thrm:maxbdd} relies on several bounds developed in this proof. Which we split the proof into two parts, where we deal with the upper and lower bound separately. The proofs of the proposition and the final proof all rely on the proof of~\cite[Theorem 2]{DevLu95}, which we adapt to the setting of WRGs.
}

\begin{proof}[Proof of Proposition~\ref{prop:bddub}, upper bound]
We set $a_n:= c\log n $, with $c>1/\log \theta_m$ and let $\eps\in(0,\min\{m/\E{\F}-c+c\log(c\E{\F}/m),c\E{\F}/(m\e^2),1/2\})$. Note that the first argument of the minimum equals zero when $c=m/\E{W}$ and is positive otherwise. As $\theta_m = 1 + \E W/m$ and  $c>1/\log \theta_m>m/\E{W}$, this minimum is strictly positive. We aim to show that
\be \label{eq:condub}
\sum_{i=1}^n \Pf{\zni\geq a_n}\toas 0,
\ee 
which implies via Lemma~\ref{lemma:unionprob} and the dominated convergence theorem that
\be \label{eq:ub}
\mathbb{P}\Big(\max_{\inn}\zni\geq a_n\Big)\to 0.
\ee 
Using a Chernoff bound and the fact that $\zni$ is a sum of independent indicator random variables, we have for any $t >0$,
\be \label{eq:chernoff}
\Pf{\zni\geq a_n}\leq \mathrm{e}^{-ta_n}\prod_{j=i}^{n-1}\Big(\frac{\F_i}{S_j}\e^t+\Big(1-\frac{\F_i}{S_j}\Big)\Big)^m\!\!\leq \e^{-ta_n+(\e^t-1)m\F_i(H_n-H_i)},
\ee 
where $H_n:=\sum_{j=1}^{n-1}1/S_j$. This expression is minimised for $t=\log(a_n)-\log(m\F_i(H_n-H_i))$, which yields the upper bound
\be \label{eq:uibound}
\Pf{\zni\geq a_n}\leq \e^{a_n(1-u_i+\log u_i)},
\ee 
with $u_i=m\F_i(H_n-H_i)/a_n$. We note that the mapping $x\mapsto 1-x+\log x$ is increasing for $x\in(0,1)$. Moreover, by~\eqref{eq:sumfitnessconv}, $mH_n/a_n<1$ holds almost surely for all sufficiently large $n$ by the choice of $c$. Then, as we can bound $W_i$ from above by $1$ almost surely and $(H_n-H_i)$ is decreasing in $i$, we find, almost surely, for $n$ large and uniformly in $i$,
\be \ba
\Pf{\zni\geq a_n}&\leq \exp(a_n(1-mH_n/a_n+\log(mH_n/a_n))\\
&=\exp(c\log n(1-m/(c\E{\F})+\log(m/(c\E{\F})))(1+o(1)))\\
&=\exp(-\log n (m/\E{\F}-c+c\log(c\E{\F}/m))(1+o(1))).
\ea \ee 
Thus,
\be \label{eq:nepsbound}
\sum_{i<n^\eps}\Pf{\zni\geq a_n}\leq \exp(-\log n (m/\E{\F}-c+c\log(c\E{\F}/m)-\eps)(1+o(1))),
\ee 
which tends to zero almost surely as $\eps<m/\E{\F}-c+c\log(c\E{\F}/m)$.  Similarly, again using that $W_i\leq 1,mH_n/a_n<1$ almost surely for $n$ large, 
\be
\sum_{i>n^{1-\eps}}\!\!\!\Pf{\zni\geq a_n}\leq n\exp\Big(a_n\Big (1-\frac{m(H_n-H_{\lceil n^{1-\eps}\rceil})}{a_n}+\log\Big(\frac{m(H_n-H_{\lceil n^{1-\eps}\rceil})}{a_n}\Big)\Big).
\ee 
As $H_n-H_{\lceil n^{1-\eps}\rceil}=\eps\log n(1+o(1))$ almost surely for $n$ large, 
\be \ba \label{eq:n1minepsbound}
\sum_{i\geq n^{1-\eps}}\!\!\!\Pf{\zni\geq a_n}&\leq n\exp\Big(c\log n \Big(1-\frac{\eps m}{c\E{\F}}+\log\Big(\frac{\eps m}{c\E{\F}}\Big)\Big)(1+o(1))\Big)\\
&= n^{-(-c+\eps m/\E{\F}-c\log(\eps m/(c\E{\F}))-1)(1+o(1))},
\ea \ee 
which also tends to zero almost surely since $\eps<c\E{\F}/(m\e^2)$. It thus remains to prove that 
\be \label{eq:middle}
\sum_{n^\eps\leq i<n^{1-\eps}}\Pf{\zni\geq a_n}\toas 0.
\ee 
{\cb We use the same bound as in~\eqref{eq:uibound}, which holds uniformly in $\inn$ and we recall that $u_i=mWi(H_n-H_i)/(c\log n)$. In fact, we bound~\eqref{eq:uibound} from above further by using that $u_i\leq m(H_n-H_i)/(c\log n)=:\wt u_i$ almost surely. Define $u:\R\to \R$ by $u(x):=m(1-\log x/\log n)/(c\E W)$ and $\phi:\R\to\R$ by $\phi(x):=1-x+\log x$. Then, for $n^\eps\leq i< n^{1-\eps}$ such that $i=n^{\beta+o(1)}$ for some $\beta\in[\eps,1-\eps]$ (where the $o(1)$ is independent of $\beta$) and $x\in[i,i+1)$, 
	\be\ba\label{eq:phidiff}
	|\phi(\wt u_i)-& \phi(u(x))| \leq |\wt u_i-u(x)|+|\log(\wt u_i/u(x))|\\
	={}&\bigg|\frac{m}{c\E W}\Big(1-\frac{\log x}{\log n}\Big)-\frac{m}{c\log n}\sum_{j=i}^{n-1}\frac{1}{S_j}\bigg|+\bigg|\log\bigg(\frac{\E W}{\log n-\log x}\sum_{j=i}^{n-1}\frac{1}{S_j}\bigg)\bigg|.
	\ea\ee 
	By~\eqref{eq:sumfitnessconv} and since $i$ diverges with $n$, $\sum_{j=i}^{n-1}1/S_j-\log(n/i)/\E W=o(1)$ almost surely as $n\to\infty$. Applying this to the right-hand side of~\eqref{eq:phidiff} yields
	\be 
	|\phi(\wt u_i)-\phi(u(x))|\leq \frac{m}{c\E W}\Big|\frac{\log x-\log i}{\log n}\Big|+\Big|\log\Big(1+\frac{\log x-\log i+o(1)}{\log n-\log x}\Big)\Big|.
	\ee 
	Since $x\geq i\geq n^\eps$ and $|x-i|\leq 1$, we thus obtain that, uniformly in $n^\eps\leq i< n^{1-\eps}$ and $x\in[i,i+1)$, $|\phi(\wt u_i)-\phi(u(x))|=o(1/(n^\eps\log n))$ almost surely as $n\to\infty$. Applying this to the left-hand side of~\eqref{eq:middle} together with~\eqref{eq:uibound} (using $\wt u_i$ rather than $u_i$), we can bound the sum from above by 
	\be\ba \label{eq:intbound}
	\sum_{n^\eps\leq i< n^{1-\eps}}\Pf{\zni\geq a_n}&\leq \sum_{n^\eps\leq i< n^{1-\eps}}\e^{a_n\phi(\wt u_i)}\\
	&\leq \sum_{n^\eps\leq i< n^{1-\eps}}\int_i^{i+1}\e^{a_n \phi(u(x))+a_n|\phi(\wt u_i)-\phi(u(x))|}\,\d x\\
	&\leq (1+o(1))\int_{n^{\eps} }^{ n^{1-\eps}+1}\e^{a_n\phi(u(x))}\,\d x.
	\ea \ee  
	Recall that $\theta_m=1+\E{\F}/m$ and set $\wt \theta_m:=1+m/\E{\F}$. Using the variable transformation $w=\wt \theta_m(\log n-\log x)$ and Stirling's formula in the last line yields
	\be \ba
	(1+{}&o(1))\frac{n^{1+c-c\log\theta_m} }{\wt \theta_m(c\log n)^{c\log n}}\int_{\eps\wt \theta_m\log n+o(1)}^{(1-\eps)\wt \theta_m\log n}\!\!\!w^{c\log n}\e^{-w}\,\d w\leq \frac{2n^{1+c-c\log\theta_m} }{\wt \theta_m(c\log n)^{c\log n}}\Gamma(1+c\log n)\\
	&\sim \frac{2 n^{1-c\log \theta_m}}{\wt\theta_m}\sqrt{2\pi c \log n},
	\ea \ee
}
which tends to zero by the choice of $c$. Hence, combining the above with~\eqref{eq:nepsbound} and~\eqref{eq:n1minepsbound} yields~\eqref{eq:condub} and hence~\eqref{eq:ub}.
\end{proof} 

\begin{proof}[Proof of Proposition~\ref{prop:bddub}, lower bound]
We set $a_n:=\lceil c\log n\rceil,b_n:=\lceil \delta \log n\rceil$, where we take $c\in(0,1/\log\theta_m)$ and $\delta \in(0,1/\log\theta_m-c)$. For $i\in \N$ fixed, we couple $\zni$ to a sequence of suitable random variables. Let $(P_j)_{j\geq 2}$ be independent Poisson random variables with mean $m\F_i/S_{j-1}, j\geq 2$. Then, we can couple $\zni$ to the $P_j$'s to obtain
\be \label{eq:lowerboundcoupling}
\zni\geq \sum_{j=i+1}^n P_j \ind_{\{P_j\leq 1\}}=\sum_{j=i+1}^n P_j -\sum_{j=i+1}^n P_j\ind_{\{P_j> 1\}}=:W_n(i)-Y_n(i).
\ee 
By Lemma~\ref{lemma:unionprob} and the inequality
\be 
\Pf{\zni\geq a_n}\geq \Pf{W_n(i)\geq a_n+b_n}-\Pf{Y_n(i)\geq b_n},
\ee
it follows that we are required to prove that, for some $\eps,\xi>0$ sufficiently small,
\be \label{eq:conds}
\sum_{\substack{n^\eps\leq i\leq n^{1-\eps}\\ W_i\geq \e^{-\xi}}}\Pf{W_n(i)\geq a_n+b_n}\toinp\infty,\qquad \sum_{n^\eps \leq i \leq n^{1-\eps}}\Pf{Y_n(i)\geq b_n}\toinp 0,
\ee
as $n$ tends to infinity, to obtain 
\be
\mathbb{P}_W\Big(\max_{\inn}\zni\geq a_n\Big)\toinp 1.
\ee 
Using the uniform integrability of the conditional probability measure then yields
\be 
\lim_{n\to\infty}\mathbb{P}\Big(\max_{\inn}\zni \geq a_n\Big)=1,
\ee 
which yields the desired result. It thus remains to prove the two claims in~\eqref{eq:conds}.

We first prove the first claim of~\eqref{eq:conds}. Note that $W_n(i)$ is a Poisson random variable with parameter $m\F_i\sum_{j=i}^{n-1}1/S_j$. We note that by the strong law of large numbers, for some $\eta\in(0,\e^{1/(c+\delta)}-\theta_m)$, 
\be\label{eq:llnbound}
m\F_i\sum_{j=i}^{n-1}1/S_j\geq m\F_i\sum_{j=i}^{n-1}1/(j(\E{\F}+\eta))\geq (m\F_i/(\E{\F}+\eta))\log(n/i)
\ee
for all $ n^\eps\leq i\leq n$ almost surely when $n$ is sufficiently large. We can thus, for $n$ large, conclude that $W_n(i)$ stochastically dominates $X_n(i)$, where $X_n(i)$ is a Poisson random variable with a parameter equal to the right-hand side of~\eqref{eq:llnbound}. Then, also using that $W_i\leq 1$ almost surely, it follows that for $i\geq n^\eps$,
\be \ba 
\Pf{W_n(i)\geq a_n+b_n}&\geq \Pf{X_n(i)\geq a_n+b_n}\\
&\geq \Pf{X_n(i)=a_n+b_n}\\
&\geq \Big(\frac{i}{n}\Big)^{m/(\E{\F}+\eta)}W_i^{a_n+b_n}\frac{((m/(\E{\F}+\eta))\log(n/i))^{a_n+b_n}}{(a_n+b_n)!}.
\ea\ee 
We now sum over all $\inn$ such that $n^\eps \leq i\leq n^{1-\eps}, W_i\geq \e^{-\xi}$ for some sufficiently small $\eps,\xi>0$. By the lower bound on the vertex-weight, we obtain the further lower bound
\be \ba\label{eq:tndef}
\sum_{\substack{n^\eps\leq i\leq n^{1-\eps}\\ W_i\geq \e^{-\xi}}}{}&\!\!\!\Pf{W_n(i)\geq a_n+b_n}\\
\geq{}& \!\!\sum_{n^{\eps}\leq i\leq n^{1-\eps}}\!\!\!\ind_{\{W_i\geq \e^{-\xi}\}}\e^{-\xi(a_n+b_n)}\Big(\frac{i}{n}\Big)^{m/(\E{\F}+\eta)}\frac{((m/(\E{\F}+\eta))\log(n/i))^{a_n+b_n}}{(a_n+b_n)!}\\
=:{}&T_n.
\ea\ee 
We now claim that $T_n\toinp \infty$. This follows from the fact that the mean of $T_n$ diverges, and that $T_n$ concentrates around the mean. We first show the former statement. Let $p=p(\xi)=\P{W\geq \e^{-\xi}}$. Note that, due to the fact that $x_0=\sup\{x\in\R: \P{W\leq x}<1\}=1$, $p>0$ for any $\xi>0$. Hence,
\be\label{eq:meantn}
\E{T_n}=p\e^{-\xi(a_n+b_n)}\Big(\frac{m}{\E{W}+\eta}\Big)^{a_n+b_n}\frac{1}{(a_n+b_n)!}\sum_{n^{\eps}\leq i\leq n^{1-\eps}}\!\!\!\Big(\frac{i}{n}\Big)^{m/(\E{\F}+\eta)}\log(n/i)^{a_n+b_n}.
\ee 
Then, in a similar way as in~\eqref{eq:intbound}, and applying a variable transformation $t=(1+m/(\E{W}+\eta))\log(n/x)$,
\be \ba 
\phantom{0}\!\! & \sum_{n^{\eps}\leq i\leq n^{1-\eps}}\!\!\!\Big(\frac{i}{n}\Big)^{m/(\E{\F}+\eta)}(\log(n/i))^{a_n+b_n}\\
& =(1+o(1)) \int_{n^\eps}^{n^{1-\eps}}\Big(\frac{x}{n}\Big)^{m/(\E{W}+\eta)}\log(n/x)^{a_n+b_n}\,\d x\\
& =(1+o(1))n(a_n+b_n)!\Big(1+\frac{m}{\E{W}+\eta}\Big)^{-(a_n+b_n+1)}\!\!\!\int_{\eps(1+m/(\E{W}+\eta))\log n}^{(1-\eps)(1+m/(\E{W}+\eta))}\frac{\e^{-t}t^{a_n+b_n}}{(a_n+b_n)!}\,\d t.
\ea \ee 
We now identify the integral as the probability of the event $\{Y_n\in (\eps(1+m/(\E W+\eta))\log n,(1-\eps)(1+m/(\E W+\eta))\log n)\}$, where $Y_n$ is a sum of $a_n+b_n+1$ rate one exponential random variables. Since $a_n+b_n=(1+o(1))(c+\delta)\log n$, it follows from the law of large numbers that this probability equals $1-o(1)$ when $c+\delta\in (\eps(1+m/(\E W+\eta)),(1-\eps)(1+m/(\E W+\eta)))$, which is the case for $\eps,\eta$ sufficiently small. Thus, combining the above with~\eqref{eq:meantn}, we arrive at
\be \ba\label{eq:exptn}
\E{T_n}&\sim p\e^{-\xi(a_n+b_n)}\Big(\frac{m}{\E{W}+\eta}\Big)^{a_n+b_n}\Big(1+\frac{m}{\E{W}+\eta}\Big)^{-(a_n+b_n)+1}n\\
&=p\frac{\E{W}+\eta}{m\theta_m+\eta}\exp(\log n(1-(1+o(1))(c+\delta)(\log(\theta_m+\eta/m)+\xi))).
\ea \ee 
By the choice of $c$ and $\delta$, the exponent is positive when $\eta$ and $\xi$ are sufficiently small. What remains is to show that $T_n$ concentrates around $\E{T_n}$. Using a Chebyshev bound yields for any $\zeta>0$ fixed,
\be \label{eq:chebytn}
\P{|T_n/\E{T_n}-1|\geq \zeta}\leq \frac{\Var(T_n)}{(\zeta\E{T_n})^2},
\ee 
so that the result follows if $\Var(T_n)=o(\E{T_n}^2)$. Since $T_n$ is a sum of weighted, independent Bernoulli random variables, we readily have 
\be \ba 
\Var(T_n)= \Big(\frac{m}{\E{W}+\eta}\Big)^{2(a_n+b_n)}\frac{p(1-p)\e^{-2\xi(a_n+b_n)}}{((a_n+b_n)!)^2}\!\!\!\!\!\sum_{n^{\eps}\leq i\leq n^{1-\eps}}\!\!\!\Big(\frac{i}{n}\Big)^{\frac{2m}{(\E{\F}+\eta)}}\log\Big(\frac{n}{i}\Big)^{2(a_n+b_n)}.
\ea \ee 
Again writing the sum as an integral over $x$ instead of $i$, and now using the variable transformation $t=(1+2m/(\E{W}+\eta))\log(n/x)$, we obtain that the sum equals
\be
(n+o(n))(2(a_n+b_n))!\Big(1+\frac{2m}{\E W+\eta}\Big)^{-2(a_n+b_n)-1}\!\!\!\int_{\eps(1+\frac{2m}{\E W+\eta})\log n}^{(1-\eps)(1+\frac{2m}{\E W+\eta})\log n}\!\!\frac{\e^{-t}t^{2(a_n+b_n)}}{(2(a_n+b_n))!}\,\d t.
\ee 
We again interpret the integral as the probability of the event $\{\wt Y_n\in (\eps(1+2m/(\E W+\eta))\log n,(1-\eps)(1+2m/(\E W+\eta))\log n) \}$, where $\wt Y_n$ is a sum of $2(a_n+b_n)$  rate one exponential random variables. Again, for $\eta$ and $\eps$ sufficiently small, this probability is $1-o(1)$ by the law of large numbers. Thus, we obtain,
\be 
\Var(T_n)=(n+o(n))\frac{p(1-p)(\E{W}+\eta)}{m(\theta_m+1)+\eta}\e^{-2\xi(a_n+b_n)} \frac{(2(a_n+b_n))!}{((a_n+b_n)!)^2}\Big(1+\theta_m+\frac{\eta}{m}\Big)^{-2(a_n+b_n)}.
\ee
Using Stirling's approximation for the factorial terms then yields
\be  
\Var(T_n)\sim \frac{p(1-p)(\E{W}+\eta)}{(m(\theta_m+1)+\eta)\sqrt{\pi(a_n+b_n)}}\\e^{\log n+2(a_n+b_n)(\log 2-\log(1+\theta_m+\eta/m)-\xi)}.
\ee  
Combining this with~\eqref{eq:exptn}, we find that
\be\ba \label{eq:vartn}
\frac{\Var(T_n)}{\E{T_n}^2}\leq  \frac{K}{\sqrt{a_n+b_n}}\exp\Big(\log n\Big(-1+2(c+\delta)(1+o(1))\log\Big(2\frac{\theta_m+\eta/m}{1+\theta_m+\eta/m}\Big)\Big)\Big),
\ea \ee 
where $K>0$ is a suitable constant. The exponential terms decays with $n$ when 
\be 
c+\delta < \Big(\log\Big(4\Big(\frac{\theta_m+\eta/m}{1+\theta_m+\eta/m}\Big)^2\Big)\Big)^{-1}, 
\ee 
which is satisfied for any $\theta_m\in(1,2]$ when $\eta$ is sufficiently small, since $c+\delta<1/\log \theta_m$ and 
\be 
\log \theta_m> \log\Big(4\Big(\frac{\theta_m+\eta/m}{1+\theta_m+\eta/m}\Big)^2\Big)
\ee 
holds for any $\theta_m\in(1,2]$ when $\eta$ is sufficiently small. Therefore, $T_n/\E{T_n}\toinp 1$, so that $T_n\toinp \infty$. This then implies the first statement in~\eqref{eq:conds}. 

We now prove the second statement of~\eqref{eq:conds}. For a Poisson random variable $P$ with mean $\lambda$, we find that 
\be \label{eq:lambdabound}
\E{P\ind_{\{P>1\}}}=\E{P}-\P{P=1}=\lambda\big(1-\e^{-\lambda}\big)\leq \lambda^2,
\ee 
and, for any $t\in\R$, it follows from~\cite[Page $8$]{DevLu95} that
\be \label{eq:mgfbound}
\E{\e^{t(P\ind_{\{P>1\}}-\E{P\ind_{\{P>1\}}})}}\leq \e^{\lambda^2\e^{2t}}.
\ee 
Now, since $Y_n(i)=\sum_{j=i+1}^{n}P_j\ind_{\{P_j>1\}}$, where the $P_j$'s are independent Poisson random variables with mean $mW_i/S_{j-1}$, using an upper bound inspired by~\eqref{eq:llnbound} and using~\eqref{eq:lambdabound}, we obtain that almost surely for all $n$ large and $i\geq n^\eps$,
\be
\Ef{}{Y_n(i)}\leq \frac{m^2}{(\E{\F}-\eta)^2}\sum_{j=i}^{n-1}1/j^2\leq \frac{m^2}{(\E{\F}-\eta)^2(i-1)}.
\ee 
Then, for $i\geq n^\eps$ and $n$ large enough so that $b_n(i-1)\geq 4(m/(\E{\F}-\eta))^2$, we write for any $t > 0$ using~\eqref{eq:mgfbound},
\be \ba
\Pf{Y_n(i)\geq b_n}&\leq \Pf{Y_n(i)-\Ef{}{Y_n(i)}\geq b_n/2}\\
&\leq \e^{-tb_n/2}\Ef{}{\e^{t(Y_n(i)-\Ef{}{Y_n(i)})}}\\
&\leq \exp(-tb_n/2 +\e^{2t}m^2/((\E{\F}-\eta)^2(i-1))). 
\ea \ee 
This upper bound is smallest for $t=\log(b_n(i-1)(\E{\F}-\eta)^2/(4m^2))/2$, which yields the upper bound
\be
\exp\Big(\frac{b_n}{4}\Big(1-\log\Big(\frac{b_n(i-1)(\E{\F}-\eta)^2}{4m^2}\Big)\Big)\Big)=\Big(\frac{4\e m^2}{b_n(\E{\F}-\eta)^2(i-1)}\Big)^{b_n/4}\leq n^{-\eps b_n/4},
\ee 
when $n$ is large enough such that $b_n\geq 8\e m^2/(\E{\F}-\eta)^2$ and $n^\eps\leq 2(n^\eps-1)$. It then follows that 
\be \label{eq:ybound}
\sum_{n^\eps\leq i\leq n^{1-\eps}}\Pf{Y_n(i)\geq b_n}\leq n^{1-\eps\delta \log n/4},
\ee 
which tends to zero with $n$ almost surely. This yields the second claim in~\eqref{eq:conds} and concludes the proof.
\end{proof}

As stated at the start of this section, Proposition~\ref{prop:bddub} implies that 
\be 
\max_{\inn}\zni/\log n\toinp 1/\log\theta_m.
\ee 
It thus remains to strengthen this to almost sure convergence to complete the proof of Theorem~\ref{thrm:maxbdd}.

\begin{proof}[Proof of Theorem~\ref{thrm:maxbdd}]
Let $k_n:=\lceil \theta_m^n\rceil$ and set $Z_n:=\max_{\inn}\zni$. Similar to~\cite{DevLu95}, we use the bounds
\be 
\inf_{N\leq n}\frac{Z_{k_n}}{(n+1)\log\theta_m}\leq \inf_{2^N\leq n}\frac{Z_n}{\log n}\leq \sup_{2^N\leq n}\frac{Z_n}{\log n}\leq \sup_{N\leq n}\frac{Z_{k_{n+1}}}{n\log\theta_m}.
\ee 
It thus follows that to prove the almost sure convergence of the rescaled maximum degree $Z_n/\log n$, it suffices to do so for the subsequence $Z_{k_n}/\log k_n$, for which we can obtain stronger bounds due to the fact that $k_n$ grows exponentially in $n$. To prove the almost sure convergence of $Z_{k_n}/\log n$ to $1/\log\theta_m$, it thus suffices to prove 
\be \label{eq:liminfsup}
\liminf_{n\to\infty}\frac{Z_{k_n}}{(n+1)\log\theta_m}\geq \frac{1}{\log\theta_m}, \qquad \limsup_{n\to\infty}\frac{Z_{k_{n+1}}}{n\log\theta_m}\leq \frac{1}{\log\theta_m},
\ee 
almost surely, which can be achieved with the bounds used to prove the convergence in probability in Proposition~\ref{prop:bddub}. Namely, for the upper bound, using~\eqref{eq:nepsbound},~\eqref{eq:n1minepsbound} and~\eqref{eq:intbound}, we obtain for any $c>1/\log\theta_m$, a sufficiently small $\xi>0$ and some large constant $C>0$,
\be
\sum_{i=1}^{k_{n+1}} \Pf{\Zm_{k_{n+1}}(i)/(n\log\theta_m)\geq c}\leq2\e^{-\xi n(1+o(1))}+(1+o(1))C\sqrt n\e^{-\xi n},
\ee 
which is summable, so it follows from the Borel-Cantelli lemma that the upper bound in~\eqref{eq:liminfsup} holds $\mathbb{P}_\F$-almost surely. To extend this to $\mathbb{P}$-almost surely, we write 
\be \label{eq:pas}
\P{\Zm_{k_n}(i)/(n\log \theta_m)\geq c}\leq \E{\Pf{\Zm_{k_n}(i)/(n\log \theta_m)\geq c}\,|\, C_{k_n}(\eta)}+\P{C^c_{k_n}(\eta)},
\ee 
where $C_n(\eta):=\{S_j\in(\E W-\eta,\E W+\eta)\text{ for all }j\geq n^\eps\}$ holds for all but finitely many $n$ almost surely. On the event $C_{k_n}(\eta)$, we can obtain a deterministic and summable bound as demonstrated above. Moreover, since the vertex-weights are bounded, we can obtain exponentially decaying bounds for $\P{C_{k_n}^c(\eta)}$, so that it is summable as well. Hence, the Borel-Cantelli lemma yields the required result.

Similarly, for the lower bound in~\eqref{eq:liminfsup}, we have for $c<1/\log\theta_m$ by Lemma~\ref{lemma:unionprob},
\be \ba \label{eq:asbound}
\mathbb{P}_W(Z_{k_n}/((n+1)\log\theta_m)<c)&\leq 1-\frac{\sum_{i=1}^{k_n}\Pf{\Zm_{k_n}(i)\geq c(n+1)\log\theta_m}}{1+\sum_{i=1}^{k_n}\Pf{\Zm_{k_n}(i)\geq c(n+1)\log\theta_m}}\\
&=\frac{1}{1+\sum_{i=1}^{k_n}\Pf{\Zm_{k_n}(i)\geq c(n+1)\log \theta_m}}.
\ea\ee 
We again bound the sum from below by
\be \ba
\sum_{i=1}^{k_n}\Pf{\Zm_{k_n}(i)\geq c(n+1)\log \theta_m}\geq{}& \sum_{\substack{k_n^\eps\leq i\leq k_n^{1-\eps}\\ W_i\geq \e^{-\xi}}}\Pf{W_{k_n}(i)\geq (c+\delta)(n+1)\log \theta_m}\\
&-\sum_{k_n^\eps\leq i\leq k_n^{1-\eps}}\Pf{Y_{k_n}(i)\geq \delta (n+1)\log \theta_m}.
\ea \ee 
First, by the bound in~\eqref{eq:ybound}, we find that 
\be 
1-\sum_{k_n^\eps\leq i\leq k_n^{1-\eps}}\Pf{Y_{k_n}(i)\geq \delta (n+1)\log \theta_m}\geq 0,
\ee 
almost surely for all $n$ large. Then, we bound the sum of tail probabilities of the $W_{k_n}(i)$ from below by $T_{k_n}$, where we recall the definition of $T_n$ from~\eqref{eq:tndef}. Combining~\eqref{eq:chebytn} and~\eqref{eq:vartn}, we find that $T_{k_n}\geq \E{T_{k_n}}/2$ almost surely for all $n$ large. Together with~\eqref{eq:exptn} and~\eqref{eq:asbound}, we thus obtain  
\be 
\mathbb{P}_W(Z_{k_n}/((n+1)\log\theta_m)<c)\leq C \e^{-n\log \theta_m(1-(1+o(1))(c+\delta)(\log(\theta_m+\eta/m)+\xi))},
\ee  
for some constant $C>0$, which is summable when $\eta$ and $\xi$ are sufficiently small, so that the lower bound holds for all but finitely many $n$ $\mathbb P_W$-almost surely by the Borel-Cantelli lemma. A similar argument as in~\eqref{eq:pas} allows us to extend this to $\mathbb P$-almost surely, which concludes the proof.
\end{proof}

\begin{proof}[Proof of Theorem~\ref{thrm:maxfrechet}]
The proof of the convergence of $\max_{\inn}\zni/u_n$ and \newline $\max_{\inn}\zni/n$ as in~\eqref{eq:wrrtppplimit} and~\eqref{eq:wrrtinfmeanppplimit}, respectively, follows directly from Proposition~\ref{prop:condmeanfrechet} combined with~\eqref{eq:wrrtconcii} and~\eqref{eq:wrrtinfmeanconcii} in Proposition~\ref{lemma:wrrtconcentration}. The distributional convergence of $I_n/n$ to $I_\alpha$ and $I$ as in~\eqref{eq:wrrtppplimit} and~\eqref{eq:wrrtinfmeanppplimit}, respectively, follows from the {\cb following argument. Recall the Poisson point process $\Pi$ in the statement of the theorem with intensity measure $\mu(\d t,\d x):=\d t\times (\alpha-1)x^{-\alpha}\d x$ and define for $0\leq a\leq b\leq 1$, 
	\be \ba
	Q_\ell(a)& :=\!\!\max_{(t,f)\in\Pi: t\in(0,a)}\!\! f\log(1/t), \quad Q(a,b) :=\!\!\max_{(t,f)\in\Pi: t\in(a,b)}\!\!f\log(1/t),\\  Q_r(b)& :=\!\!\max_{(t,f)\in\Pi:t\in(b,1)}\!\! f\log(1/t),
	\ea \ee 
	as well as the events 
	\be \ba
	M_n(a,b)&:=\Big\{\max_{an<i<bn}\zni/u_n>\big(\max_{1\leq i\leq an}\zni/u_n\vee \max_{bn\leq i\leq n}\zni/u_n)\Big\},\\
	M(a,b)&:=\Big\{Q(a,b)>Q_\ell(a)\vee Q_r(b)\Big\}.
	\ea\ee 	
	Then, from the start of the proof, we have that
	\be 
	\P{I_\alpha\in(a,b)}=\lim_{n\to\infty}\P{I_n/n\in(a,b)}=\lim_{n\to\infty}\P{M_n(a,b)}= \P{M(a,b)}.
	\ee 
	A similar approach can be used to show the convergence when $\alpha\in(1,2)$, as in~\eqref{eq:wrrtinfmeanppplimit}. By the independence property of Poisson point process, it follows that the right-hand side equals $g(a,b)/g(0,1)$, where $g(a,b):=\int_a^b \log(1/x)^{\alpha-1}\,\d x$.} Using the variable transformation $w=\log(1/x)$,
\be \ba
g(a,b)& =\int_{\log(1/b)}^{\log(1/a)}w^{\alpha-1}\e^{-w}\,\d w\\ & =\Gamma(\alpha)\P{W_\alpha\in(\log(1/b),\log(1/a))}=\Gamma(\alpha)\P{\e^{-W_\alpha}\in(a,b)}, 
\ea \ee 
where $W_\alpha$ is a $\Gamma(\alpha,1)$ random variable. Thus, 
\be 
\P{I_\alpha\leq t}=\frac{g(0,t)}{g(0,1)}=\P{\e^{-W_\alpha}\leq t},
\ee 
and $m\max_{(t,f)\in\Pi}f\log(1/t)$ follows a Fr\'echet distribution with shape parameter $\alpha-1$ and scale parameter $mg(0,1)^{1/(\alpha-1)}=m\Gamma(\alpha)^{1/(\alpha-1)}$. { \cb  The joint convergence of $I_n/n$ and $\max_{\inn}\zni/u_n$ follows from the fact that
	\be \ba
	\P{I_n/n\in(a,b),\max_{\inn}\zni/u_n\geq x}&=\P{M_n(a,b)\cap \{\max_{an<i<bn}\zni/u_n \geq x\}}\\
	&\to \P{M(a,b)\cap\{Q(a,b)\geq x\}}\\
	&=\P{Q(a,b)>Q_\ell(a)\vee Q_r(b)\vee x}.
	\ea \ee
	Again using the independence property of Poisson point processes, one can show that this equals the probability that $I_\alpha \in(a,b)$, times the probability that $m\max_{(t,f)\in\Pi}f\log(1/t)$ is at least $x$, which concludes the proof.}
	\end{proof}
	
	\begin{proof}[Proof of Theorem~\ref{thrm:maxgumb}]

We only discuss the case $m=1$, as the proof for $m>1$ follows analogously. Most of the proof directly follows by combining Propositions~\ref{prop:condmeantauinf},~\ref{prop:condmeantaufin}, and~\ref{prop:condmeantau0} with Proposition~\ref{lemma:wrrtconcentration},~\eqref{eq:concgumbab},~\eqref{eq:concrv}, and~\eqref{eq:concgumbc}. The one statement that remains to be proved is that $\log(I_n)/\log n$ converges, either in probability or almost surely, depending on the class of distributions the vertex-weight distribution is in, to some constant. Let us start with the~\ref{ass:weightgumbel}-\ref{ass:weighttaufin} sub-case, as in~\eqref{eq:gumbppplim}.

We recall the sequence $(\wt \eps_k)_{k\in\N_0}$ in~\eqref{eq:wtepsk} and note that $\wt \eps_k$ is decreasing and tends to $0$ with $k$. Now, fix $\eps>0$ and let $k$ be large enough such that $\wt \eps_k<\eps$. We are required to show that 
\be 
\sum_{n=1}^\infty \ind_{\{\log I_n/\log n\leq \gamma-\eps\}}<\infty,\ \mathrm{a.s.}
\ee 
First, we note that
\be\ba
\Big\{\frac{\log I_n}{\log n}\leq \gamma-\eps\Big\}& \subseteq \Big\{\frac{\log I_n}{\log n}\leq \gamma -\wt\eps_k\Big\}\\ & \hspace{1cm} \subseteq \Big\{\max_{n^{\gamma-\wt\eps_k}\leq i\leq n}\!\frac{\zni}{(1-\gamma)b_{n^\gamma} \log n}\leq \!\!\max_{i<n^{\gamma-\wt\eps_k}}\!\frac{\zni}{(1-\gamma)b_{n^\gamma}\log n}\Big\}.
\ea \ee 
Let us denote the event on the right-hand side by $A_n$ and, for $\eta>0$, define the event
\be 
C_n:=\Big\{\max_{\inn}|\zni-\Ef{}{\zni}|\leq \frac{\eta}{2}(1-\gamma)b_{n^\gamma}\log n\Big\}.
\ee 
We then have that $A_n\subseteq (A_n\cap C_n)\cup C_n^c$, so that 
\be
\sum_{n=1}^\infty \ind_{\{\log I_n/\log n\leq \gamma-\eps\}}\leq \sum_{n=1}^\infty \ind_{A_n}\leq \sum_{n=1}^\infty \ind_{A_n}\ind_{C_n}+\ind_{C_n^c}.
\ee 
We now use the concentration argument used to prove~\eqref{eq:concrv} in Proposition~\ref{lemma:wrrtconcentration}. It already follows from~\eqref{eq:concrv} that only finitely many of the $\ind_{C_n^c}$ equal $1$, so what remains is to show that, almost surely, only finitely many of the product of indicators equal $1$ as well. On $C_n$, 
\be 
A_n\cap C_n\subseteq \Big\{\max_{n^{\gamma-\wt\eps_k}\leq i\leq n}\frac{\Ef{}{\zni}}{(1-\gamma)b_{n^\gamma}\log n}\leq\max_{i<n^{\gamma-\wt\eps_k}}\frac{\Ef{}{\zni}}{(1-\gamma)b_{n^\gamma}\log n}+\eta\Big\}.
\ee 
The limsup of the second maximum in the event on the right-hand side is almost surely at most $c_k<1$, where $c_k$ is the quantity defined in~\eqref{eq:eps_limit}. Then, we can directly bound the first maximum from below by 
\be 
\max_{n^{\gamma-\wt\eps_k}\leq i\leq n}\frac{\Ef{}{\zni}}{(1-\gamma)b_{n^\gamma}\log n}\geq \max_{n^{\gamma-\wt \eps_k}\leq i\leq n^\gamma}\frac{\Ef{}{\zni}}{(1-\gamma)b_{n^\gamma}\log n}\geq \max_{n^{\gamma-\eps_k}\leq i\leq n^\gamma}\frac{\F_i}{b_{n^\gamma}}\frac{\sum_{i=n^\gamma}^{n-1}1/S_j}{(1-\gamma)\log n},
\ee 
and the lower bound converges almost surely to $1$ by~\eqref{eq:sumfitnessconv} and Lemma~\ref{lemma:asconvmax}. Hence, if we take some $\delta \in(0,1-c_k)$ and set $\eta<\delta/3$, then almost surely there exists an $N\in\N$ such that for all $n\geq N$,
\be 
\max_{i<n^{\gamma-\wt\eps_k}}\frac{\Ef{}{\zni}}{(1-\gamma)b_{n^\gamma}\log n}<c_k+\delta/3,\qquad \max_{n^{\gamma-\wt\eps_k}\leq i \leq n}\frac{\Ef{}{\zni}}{(1-\gamma)b_{n^\gamma}\log n}>1-\delta/3.
\ee 
It follows that that the event 
\be
\Big\{\max_{n^{\gamma-\wt\eps_k}\leq i\leq n}\frac{\Ef{}{\zni}}{(1-\gamma)b_{n^\gamma}\log n}\leq \max_{i<n^{\gamma-\wt\eps_k}}\frac{\Ef{}{\zni}}{(1-\gamma)b_{n^\gamma}\log n}+\eta\Big\}
\ee 
almost surely does not hold for all $n\geq N$. Thus, for any $\eps>0$ and for all $n$ large, $\log I_n/\log n\geq \gamma-\eps$ almost surely. With a similar approach, we can prove that $\log I_n/\log n\leq\gamma+\eps$, so that the almost sure convergence to $\gamma$ is established.

For the~\ref{ass:weightgumbel}-\ref{ass:weighttauinf} and~\ref{ass:weightgumbel}-\ref{ass:weighttau0} sub-cases, we intend to prove the convergence of $\log I_n/\log n$ to $0$ and $1$ in probability, respectively. We provide a proof for the former sub-case, and note that the proof for the latter sub-case follows in a similar way.

Let $\eps>0$. Then,
\be \label{eq:indexbound}
\P{\log I_n/\log n\geq \eps}=\P{I_n\geq n^\eps}=\P{\max_{n^\eps \leq i\leq n}\frac{\zni}{b_n\log n}> \max_{1\leq i<n^\eps}\frac{\zni}{b_n\log n}}.
\ee 
Again, we define the event, for $\eta\in(\eps/3)$ small, 
\be 
C_n:=\Big\{\max_{\inn}|\zni-\Ef{}{\zni}| \leq \eta b_n\log n /2\Big\}, 
\ee 
which holds with high probability due to~\eqref{eq:concgumbab}. We can then further bound the right-hand side in~\eqref{eq:indexbound} from above by
\be \ba\label{eq:svindex}
\mathbb{P}{}&\Big(\Big\{\max_{n^\eps \leq i\leq n}\frac{\zni}{b_n\log n}> \max_{1\leq i<n^\eps}\frac{\zni}{b_n\log n}\Big\}\cap C_n\Big)+\P{C_n^c}\\
&\leq \P{\max_{n^\eps \leq i\leq n}\frac{\Ef{}{\zni}}{b_n\log n}> \max_{1\leq i<n^\eps}\frac{\Ef{}{\zni}}{b_n\log n}-\eta}+\P{C_n^c}.
\ea \ee 
In a similar way as in the proof of Proposition~\ref{prop:condmeantauinf}, as well as in the proof above for the~\ref{ass:weightgumbel}-\ref{ass:weighttaufin} sub-case, we can bound the maximum of the conditional mean in-degrees on the left-hand side from above and the maximum on the right-hand side from below by random quantities that converge in probability to fixed constants. Namely, for $\eps > 0$ and $\beta\in(0,\eps/3)$,
\be \ba 
\max_{n^\eps \leq i\leq n}\frac{\Ef{}{\zni}}{b_n\log n}\leq \max_{n^\eps \leq i\leq n}\frac{\F_i \sum_{j=n^\eps}^{n-1}1/S_j}{b_n\log n}&\toinp 1-\eps,\\
\max_{1\leq i<n^\eps}\frac{\Ef{}{\zni}}{b_n\log n}-\eta\geq \max_{i\in[n^\beta]}\frac{\F_i\sum_{j=n^\beta}^{n-1}1/S_j}{b_n\log n}-\eta&\toinp 1-\beta-\eta,
\ea\ee
so that, with high probability, the first quantity is at most $1-5\eps/6$ and the second quantity is at least $1-(\beta+\eta)-\eps/6>1-5\eps/6$ by the choice of $\beta$ and $\eta$. It thus follows that the first probability on the right-hand side of~\eqref{eq:svindex} tends to zero with $n$, and so does the second probability ($C_n$ holds with high probability), so that the claim follows.
\end{proof}

To conclude, we prove Theorems~\ref{Thrm:maxsecond} and~\ref{Thrm:maxsecond2}:

\begin{proof}[Proof of Theorem~\ref{Thrm:maxsecond}]
We only discuss the case $m=1$, as the proof for $m>1$ follows analogously. Let us first deal with the results for the~\ref{ass:weightgumbel}-\ref{ass:weighttaufin} sub-case. The distributional convergence of the rescaled maximum degree to the correct limit, as in~~\eqref{eq:gumbppplim2ii}, and the convergence result in~\eqref{eq:nottight}, directly follow by combining Proposition~\ref{prop:condmeantaufin} with Proposition~\ref{lemma:wrrtconcentration},~\eqref{eq:gumb2ndconc} and~\eqref{eq:betaconc}. The one thing that remains to be proved is: $I_n(\gamma,s,t,\ell)/(\ell(n)n^\gamma)$ converges in distribution, jointly with the maximum degree of vertices $i\in C_n(\gamma,s,t,\ell)$, as in~\eqref{eq:gumbppplim2ii}.

The distributional convergence of $I_n(\gamma,s,t,\ell)/(\ell(n)n^\gamma)$, as well as its joint convergence with the maximum degree of vertices in $C_n(\gamma,s,t,\ell)$, follows from the same argument as in the proof Theorem~\ref{thrm:maxfrechet}, where now
\be \label{eq:g}
g(a,b):=\log(b/a).
\ee
For the distribution of the distributional limit $I_\gamma$ of $I_n(\gamma,s,t,\ell)/(\ell(n)n^\gamma)$, with $x\in(s,t)$,
\be 
\P{I_\gamma\in(s,x)}=\frac{g(s,x)}{g(s,t)}=\frac{\log(x/s)}{\log(t/s)}=\P{\e^U\in(s,x)},
\ee 
where $U\sim\mathrm{Unif}(\log s,\log t)$. Finally, for any $x\in\R$,
\be \ba
\mathbb{P}\Big(\max_{\substack{(v,w)\in\Pi\\ v\in(s,t)}}w-\log v\leq x\Big)=\exp\Big(-\int_s^t\int_{x+\log v}^\infty \e^{-w}\,\d w \d v\Big)=\exp(-\e^{-(x-\log\log(t/s))}),
\ea \ee
which proves that the distributional limits as described in~\eqref{eq:gumbppplim2ii} and~\eqref{eq:rav2ndorder} have the desired distributions. 
\end{proof}

\begin{proof}[Proof of Theorem~\ref{Thrm:maxsecond2}]
The proof directly follows by combining Proposition~\ref{prop:condmeantaufin2ndcomp} with Proposition~\ref{prop:conc2nd},~\eqref{eq:gumb2ndconc}, and Proposition~\ref{prop:condmeantau02nd} with Proposition~\ref{lemma:wrrtconcentration},~\eqref{eq:concgumbc}.
\end{proof} 

\paragraph{\textbf{Acknowledgements.}} 

 The authors would like to thank the referees for a careful reading of the manuscript. Their suggestions helped to greatly improve the presentation of the article.
 
\bibliographystyle{abbrv}
\bibliography{wrtbib}	
\end{document}